\pgfplotsset{compat=1.10}
\definecolor{darkgreen}{rgb}{0.0,0.5,0.0}
\definecolor{darkblue}{rgb}{0.0,0.0,0.3}
\definecolor{nicosred}{rgb}{0.65,0.1,0.1}
\definecolor{light-gray}{gray}{0.6}
\definecolor{really-light-gray}{gray}{0.8}
\def\R{\mathbb R}
\def\k{\kappa}
\newcommand{\diver}{\mathrm{div}} 
\newcommand{\mres}{\mathbin{\vrule height 1.6ex depth 0pt width  
0.13ex\vrule height 0.13ex depth 0pt width 1.3ex}}    
\def\big{\bigskip}
\newtheorem{theorem}{Theorem}[section]
\newtheorem{remark}[theorem]{Remark}
\newtheorem{proposition}[theorem]{Proposition}
\newtheorem{lemma}[theorem]{Lemma}
\newtheorem{example}[theorem]{Example}
\numberwithin{equation}{section}
\numberwithin{figure}{section}
\begin{document}

\title{Perimeter inequality under circular and Steiner symmetrisation: geometric characterisation of extremals}

\author[M. Perugini]
 {Matteo Perugini}
 \address[Matteo Perugini]{Universit\'a degli Studi di Milano, Dipartimento di Matematica, Milano}
 \email{matteo.perugini@unimi.it}

\maketitle

\begin{abstract}
{\rm We study the perimeter inequality under circular symmetrisation, and we provide a full geometric characterisation of equality cases. A careful inspection of the proof shows that a similar characterisation holds true also for the perimeter inequality under Steiner symmetrisation. Our result is based on a new short proof of the perimeter inequality under symmetrisation.}
\end{abstract}

\section{Introduction}
In this paper we give a geometric characterisation of the extremals of circular and Steiner perimeter inequalities. 
\subsection{Overview}
Symmetrisation procedures have been proved to be important tools in mathematical analysis, and indeed they have been widely used to deduce geometric properties of minimizers of  variational problems, and of solutions to PDEs. For instance, Steiner symmetrisation is a fundamental instrument in the proof by Ennio De Giorgi of the isoperimetric inequality (see \cite{DeGiorgi58ISOP,DeGiorgiSelected}, and \cite[Chapter 14]{maggiBOOK}), while Schwarz symmetrisation was used to prove the classic Faber-Krahn inequality (see \cite[Chapter II.8]{kawohl_book_85}).

Despite these techniques have been used for many decades, the detailed study of the equality cases for the perimeter inequalities under symmetrisation, is a relatively recent topic of investigation. One of the first results in this direction is due to De Giorgi: in his proof of the isoperimetric inequality he showed that if a set satisfies equality in Steiner's inequality, then it must be convex along the direction in which one performs the symmetrisation. After that, the problem of characterizing the equality cases for Steiner's inequality was resumed, and intensively investigated by Chleb\'ik, Cianchi and Fusco. In their seminal work \cite{ChlebikCianchiFuscoAnnals05}, the authors gave necessary conditions that a set must satisfy in order to be an extremal \cite[Theorem 1.1]{ChlebikCianchiFuscoAnnals05}, and provided sufficient conditions under which \emph{rigidity} of equality cases holds true. Here, by \emph{rigidity} we mean the case where the only sets achieving equality are those that are already symmetric (w.r.t. the symmetrisation procedure under consideration). The results obtained in \cite{ChlebikCianchiFuscoAnnals05} were successfully extended to the Steiner symmetrisation in any codimension in \cite{barchiesicagnettifusco} (thus including also the Schwarz symmetrisation), but still no full characterisation of the cases of equality was proved. 

Finally in \cite{CagnettiColomboDePhilippisMaggiSteiner}, Cagnetti, Colombo, De Philippis and Maggi gave a full analytic characterisation of equality cases for the Steiner's inequality in terms of the properties of the barycenter function (see \cite[Theorem 1.9]{CagnettiColomboDePhilippisMaggiSteiner}). Thanks to new tools introduced by the same authors in \cite{ccdpmGAUSS}, they were able to further push the study of the \emph{rigidity}, obtaining new important results. Still in the framework of Steiner symmetrisation, inspired by \cite{CagnettiColomboDePhilippisMaggiSteiner}  and employing some general notions of convex analysis, the author was able to extend the analytic characterisation of equality cases to  the anisotropic setting (see \cite[Theorem 1.8]{Perugini}). 

Despite a full characterisation of equality cases was successfully achieved for the Steiner's inequality, for other types of symmetrisation procedures such result is still missing. In particular, in the aforementioned work presented in \cite{ccdpmGAUSS}, the authors were able to fully characterize the \emph{rigidity} of equality cases for the Gaussian perimeter inequality under Ehrhard's symmetrization, but they only showed useful necessary conditions (not sufficient) for equality cases (see \cite[Theorem A]{ccdpmGAUSS}). Lastly, a similar situation to the one just described for the Gaussian perimeter was obtained but in the setting of the perimeter inequality under spherical symmetrisation. Indeed in \cite{CagnettiPeruginiStoger} the author together with Cagnetti and St\"oger were also able to provide the full characterisation of the \emph{rigidity} problem, but regarding the characterisation of extremals nothing more than a result that can be considered as the spherical counterpart of \cite[Theorem 1.1]{ChlebikCianchiFuscoAnnals05} was achieved (see \cite[Theorem 1.1]{CagnettiPeruginiStoger}).

The analytic characterisations given in \cite[Theorem 1.9]{CagnettiColomboDePhilippisMaggiSteiner} and \cite[Theorem 1.8]{Perugini} have proven to be quite helpful in the study of \emph{rigidity}. However, they can be quite difficult to use in specific situations, since they are expressed in terms of fine properties of the barycenter function of the one dimensional slices of the sets.

In this paper we present a geometric characterisation of extremals for the perimeter inequality under both circular and Steiner symmetrisation. Such characterisation is written in terms of geometric properties of the (measure-theoretic) inner unit normal $\nu^E$ to the set $E$ to which the symmetrisation is applied (see Theorem \ref{Eq. cases per. ineq.}). In the Steiner setting, these properties appear easier to check than the analytic conditions given in \cite[Theorem 1.9]{CagnettiColomboDePhilippisMaggiSteiner}.
In the framework of circular symmetrisation, to the best of our knowledge this is the first characterisation result for the extremals of the perimeter inequality.

We will provide a detailed proof of our result for circular symmetrisation, and we will then show how this can be adapted to the Steiner setting. Inspired by \cite[Section 4.1.5]{GMSbook1}, we introduce a measure associated to the distribution function of the set under consideration (see \eqref{ def of sigma}). This allows us to give a short and direct proof of the perimeter inequality and, in turn, to describe the extremals.

As far as we know, the circular symmetrisation for sets, and its application to rearrangements of functions, was firstly introduced by P\'olya in \cite{polya50} (see also \cite[A.7--A8]{polyaszego51}, and \cite[Chapter II.9]{kawohl_book_85}). Let us now precisely introduce the circular symmetrisation for sets (see also Figure \ref{fig: sym circ}).
\subsection{Circular symmetrisation for sets}
Let us start presenting some of the notation we will use in this paper. Let $k \in \mathbb{N}$, with $k \geq 2$.
We will decompose $\mathbb{R}^k$ as $\mathbb{R}^2 \times \mathbb{R}^{k-2}$, 
and we will write $(x, z) \in \mathbb{R}^k$, with $x \in \mathbb{R}^2$ and $z \in \mathbb{R}^{k-2}$. We will denote by $| \cdot |$ the Euclidean norm of 
$\mathbb{R}$, $\mathbb{R}^2$, $\mathbb{R}^{k-2}$, $\mathbb{R}^k$, or the total variation of a Radon measure, depending on the context. For $d\in \mathbb{N}$, with $1\leq d\leq k$ we denote by $\mathcal{H}^d$, and $\mathcal{L}^d$ the $d$-dimensional Hausdorff and Lebesgue measure in $\mathbb{R}^k$, respectively. We set $\mathbb{R}^2_0 := \mathbb{R}^2 \setminus \{ (0, 0) \}$, $\mathbb{S}^{1} = \{ x \in \mathbb{R}^{2}_0 : |x| = 1 \}$, and $\mathbb{S}^{k-1} = \{ (x,z) \in \mathbb{R}^{k} : |(x,z)| = 1 \}$. Moreover, given $r>0$ we write $\partial B(r)=\{x\in \mathbb{R}^2:\, |x|=r  \}$ to denote the boundary of the $2$-dimensional ball centered at the origin with radius $r$. Lastly, for every $x\in\mathbb{R}^2_0$ we set $\hat{x}=x/|x|$.
\medskip

We are now going to define the circular symmetral of a Borel set in $\mathbb{R}^k$ with respect to the half-hyperplane 
\( \{ (x_1, x_2, z_1, \ldots, z_{k-2}) \in \mathbb{R}^{k} : x_1 > 0, x_2 = 0 \}  = (0, \infty) \times \{ 0 \} \times 
\mathbb{R}^{k-2} \). For every Borel set $E \subset \mathbb{R}^{k}$ we define
\begin{align}\label{def: circular slice}
E_{(r, z)} := \{ x \in \mathbb{R}^2_0 : |x| = r  \text{ and } (x,z) \in E \} \subset \partial B(r)
\qquad \text{ for every } (r, z) \in (0, \infty) \times \mathbb{R}^{k-2}.
\end{align}
Note that, by definition, we have 
\[
0\leq \mathcal{H}^1 (E_{(r, z)}) \leq 2 \pi r, \quad \text{ for every } (r, z) \in (0, \infty) \times \mathbb{R}^{k-2}.
\]
Let now $\mu : (0, \infty) \times \mathbb{R}^{k-2} \to [0, \infty)$ be a Lebesgue measurable function satisfying 
\begin{equation} \label{compatibility for mu}
0 \leq \mu (r, z) \leq 2 \pi r, \quad \text{ for $\mathcal{L}^{k-1}$-a.e. } (r, z) \in (0, \infty) \times \mathbb{R}^{k-2}.
\end{equation}
We will say that $E$ is \textit{$\mu$-distributed} if 
\[
\mu (r, z) = \mathcal{H}^1 (E_{(r, z)}), \quad \text{ for $\mathcal{L}^{k-1}$-a.e. } (r, z) 
\in (0, \infty) \times \mathbb{R}^{k-2}.
\]
Given a Lebesgue measurable function $\mu : (0, \infty) \times \mathbb{R}^{k-2} \to [0, \infty)$ satisfying \eqref{compatibility for mu}, 
we define the set $F_{\mu}\subset \mathbb{R}^k$ as 
\begin{align}\label{def: F_mu}
F_{\mu} := \left\{ (x, z) \in \mathbb{R}^2_0 \times \mathbb{R}^{k-2} \, :  
 2 |x| \arccos(\hat{x}\cdot e_1) < \mu ( | x | , z) \right\},
\end{align}
where $e_1\in \mathbb{R}^2$ is defined as $e_1=(1,0)$.
\begin{figure}[!htb]
\centering
\def\svgwidth{15cm}
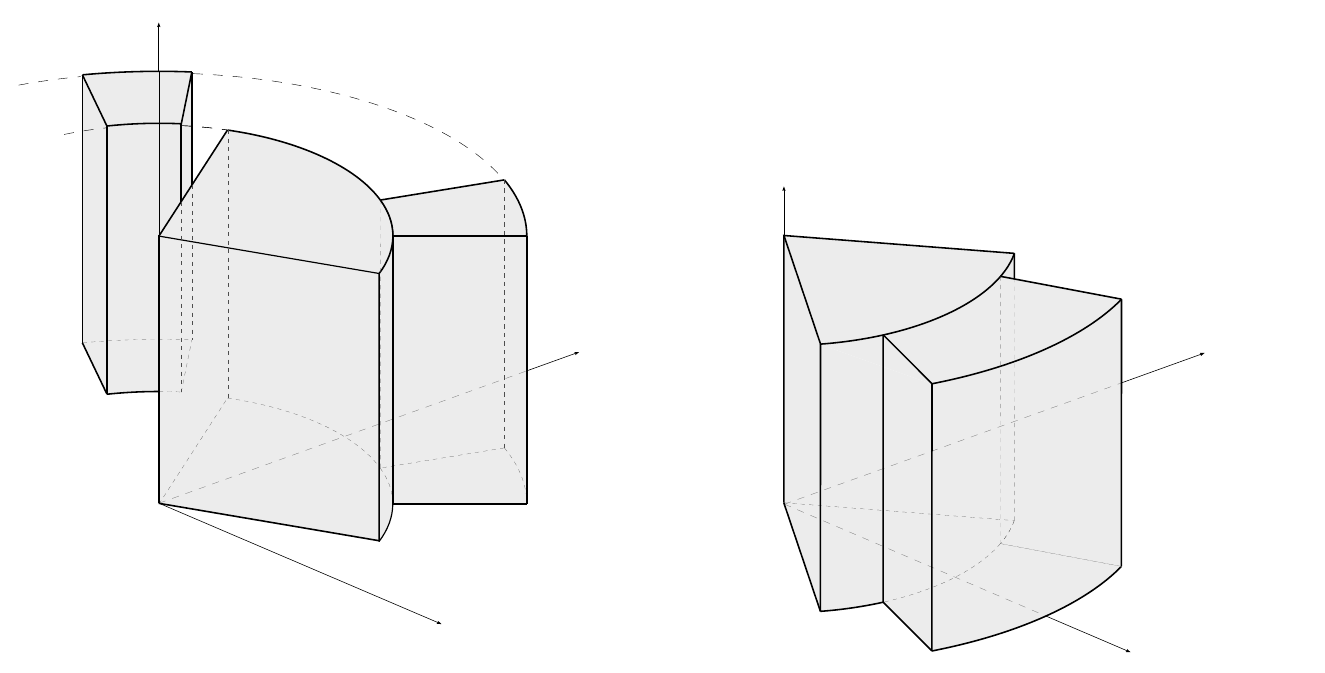
\caption{A pictorial representation in $\mathbb{R}^3$ of a $\mu$-distributed set $E$ and of its circular symmetral $F_\mu$.}
\label{fig: sym circ}
\end{figure}

\begin{remark} \label{rem used for monotonicity}
Note that by definition of $F_{\mu}$, we have 
\[
(x, z) \in F_{\mu} \, \Longrightarrow \,
(w, z) \in F_{\mu} 
\quad \forall \, \text{$w$ with $|w| = |x|$ and $\arccos(\hat{w}\cdot e_1)
\leq \arccos (\hat{x}\cdot e_1)$.}
\]
\end{remark}
\noindent
If $E \subset \mathbb{R}^{k}$ is a $\mu$-distributed Borel set, 
we say that $F_\mu$ is the \textit{circular symmetral} of $E$
with respect to the half-hyperplane 
\( \{ (x_1, x_2, z_1, \ldots, z_{k-2}) \in \mathbb{R}^{k} : x_1 > 0, x_2 = 0 \}\).

\medskip
\noindent
There is a particular bond between circular symmetrisation and Steiner symmetrisation. Firstly, both symmetrisation techniques act by slicing sets with lines of dimension 1, and second, as observed by P\'olya and Szeg\"o themselves, the limit as $c\to -\infty$ of the circular symmetrisation of a set $E$ w.r.t. the half-hyperplane 
\( \{ (x_1, x_2, z_1, \ldots, z_{k-2}) \in \mathbb{R}^{k} : x_1 > c, x_2 = 0 \}\)  ``tends'' to the Steiner symmetrisation of $E$ w.r.t. the full hyperplane \( \{ (x_1, x_2, z_1, \ldots, z_{k-2}) \in \mathbb{R}^{k} : x_2 = 0 \} \).

\subsection{Main results}
Let us now present the main results of this  work. The first result consists in a precise description of the geometric properties of the symmetral set $F_\mu$ defined in \eqref{def: F_mu}. In the following, given any set $E\subset \mathbb{R}^k$ of locally finite perimeter we denote with $\partial^*E$ and with $\nu^E(x,z)$ the reduced boundary of $E$, and the (measure-theoretic) inner unit normal to $\partial^*E$ at $(x,z)$, respectively (see Section \ref{preliminaries} for more details). Given any vector $\nu\in \mathbb{R}^k$ we set   
\begin{align*}
\nu_{  x } = (\nu_1, \nu_2),\qquad
\nu_z = (\nu_3, \nu_4, \ldots, \nu_k).
\end{align*}
In particular,  given any vector field $\nu: \mathbb{R}^k\to \mathbb{R}^k$ we use the following notation:    
\begin{align}\label{eq: circular decomposition of vectors}
\nu_{  x {\scriptscriptstyle\perp}}(x,z) = (\hat{x} \cdot \nu_x(x,z))\hat{x}, \qquad
\nu_{  x {\scriptscriptstyle\parallel}}(x,z) = \nu_x(x,z)-\nu_{  x {\scriptscriptstyle\perp}}(x,z)\quad \forall\,(x,z)\in \mathbb{R}^2_0\times\mathbb{R}^{k-2}.
\end{align}
Given $E\subset \R^k$ set of locally finite perimeter, we set
\begin{align}\label{eq: the circular version of nu^E}
\nu^E_{\mathsf{c}}(x,z):=(\hat{x}\cdot\nu^{E}_x (x,z),|\nu^{E}_{\!  x {\scriptscriptstyle\parallel}}(x,z)|,\nu^{E}_z(x,z)),\quad
\textnormal{for $\mathcal{H}^k$-a.e. $(x,z)\in\partial^*E \cap (\mathbb{R}^2_0\times\mathbb{R}^{k-2})$}.
\end{align}
\noindent
First of all, we show some useful symmetry properties of the (measure-theoretic) inner unit normal $\nu^{F_\mu}$ of $F_\mu$.
\begin{proposition} \label{real magic property}
Let $\mu  : (0, \infty) \times \mathbb{R}^{k-2}\to [0, \infty)$ be a Lebesgue measurable function
satisfying \eqref{compatibility for mu} such that $F_\mu$ is a set of locally finite perimeter. Then, \textbf{for every} $(r, z) \in (0, \infty) \times \R^{k-2} $
such that $( \partial^* F_{\mu} )_{(r,z)} \neq \emptyset$, the functions  
\begin{align}\label{eq: the components of nu^Fm}
x \mapsto \hat{x} \cdot \nu^{F_{\mu}}_x ( x, z), 
\, \,  \, \, \,  \, 
x \mapsto | \nu^{F_{\mu}}_{\!  x {\scriptscriptstyle\parallel}} ( x, z)|, \, \,\,  \, \,  \, 
x \mapsto \nu^{F_{\mu}}_z ( x, z),
\end{align}
are constant in $( \partial^* F_{\mu} )_{(r, z)}$, that is $x\mapsto \nu^{F_\mu}_{\mathsf{c}}(x,z)$ is constant in $(\partial^*F_\mu)_{(r,z)}$.
\end{proposition}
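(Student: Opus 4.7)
The plan is to pass to cylindrical coordinates, where $F_\mu$ has the structure of an ``angular subgraph.'' Setting $\phi(r,z):=\mu(r,z)/(2r)\in[0,\pi]$ and considering the smooth polar diffeomorphism
\[
\Phi:(0,\infty)\times(-\pi,\pi)\times\mathbb{R}^{k-2}\to V,\qquad \Phi(r,\theta,z):=(r\cos\theta,r\sin\theta,z),
\]
where $V\subset\mathbb{R}^2_0\times\mathbb{R}^{k-2}$ is its image (the complement of the $\mathcal{L}^k$-negligible negative $x_1$-half-hyperplane), one has $F_\mu\cap V=\Phi(\tilde F)$ with $\tilde F:=\{(r,\theta,z):|\theta|<\phi(r,z)\}$. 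Standard BV calculus under $C^\infty$-diffeomorphisms gives that $\tilde F$ is of locally finite perimeter, that $\Phi$ induces a pointwise bijection $\partial^*\tilde F\to\partial^*F_\mu\cap V$, and that the inner unit normals transform according to
\[
\nu^{F_\mu}(\Phi(p))=\frac{(D\Phi(p))^{-T}\nu^{\tilde F}(p)}{|(D\Phi(p))^{-T}\nu^{\tilde F}(p)|},\qquad p\in\partial^*\tilde F.
\]

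The decisive observation is that in the moving orthonormal frame $(\hat x,\hat x^\perp,e_3,\ldots,e_k)$ of $\mathbb{R}^k$ based at $\Phi(r,\theta,z)$ the Jacobian reduces to $D\Phi(r,\theta,z)=\mathrm{diag}(1,r,1,\ldots,1)$. Next I would compute $\nu^{\tilde F}$: at any $(r,\theta_0,z)\in\partial^*\tilde F$ with $\theta_0>0$, the blow-up of $\tilde F$ is a half-space of the form $\{\theta'<a(r,z)\,r'+\mathbf{b}(r,z)\cdot z'\}$ for some $a(r,z)\in\mathbb{R}$, $\mathbf{b}(r,z)\in\mathbb{R}^{k-2}$ determined by the blow-up of $\phi$ at $(r,z)$ and \emph{independent} of $\theta_0$, because the defining relation of $\tilde F$ involves $\theta$ only through the magnitude constraint $|\theta|<\phi(r,z)$; the case $\theta_0<0$ is mirror-symmetric and yields the same $a,\mathbf{b}$ with only the sign of the $\theta$-slot flipped. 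Thus $\nu^{\tilde F}(r,\theta_0,z)=(a,\mp 1,\mathbf{b})/\sqrt{1+a^2+|\mathbf{b}|^2}$. Applying the transformation rule above and renormalising in the moving frame yields, at every $(x,z)\in\partial^*F_\mu\cap V$ with $r=|x|$,
\[
\hat x\cdot\nu^{F_\mu}_x(x,z)=\frac{r\,a(r,z)}{D(r,z)},\quad \bigl|\nu^{F_\mu}_{\!  x{\scriptscriptstyle\parallel}}(x,z)\bigr|=\frac{1}{D(r,z)},\quad \nu^{F_\mu}_z(x,z)=\frac{r\,\mathbf{b}(r,z)}{D(r,z)},
\]
where $D(r,z):=\sqrt{1+r^2 a(r,z)^2+r^2|\mathbf{b}(r,z)|^2}$. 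Each of these three expressions depends only on $(r,z)$, giving exactly the constancy stated in \eqref{eq: the components of nu^Fm}.

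The main obstacle I foresee is that the proposition asserts the conclusion for \emph{every} slice with nonempty trace on $\partial^*F_\mu$, while the ``subgraph formula'' for $\nu^{\tilde F}$ is classically derived only $\mathcal{H}^{k-1}$-almost everywhere. To upgrade it, I would argue directly from the blow-up definition of the reduced boundary: every blow-up of $\tilde F$ at a reduced-boundary point must respect the structural description $|\theta|<\phi(r,z)$, and therefore its limiting half-space is a graph over $(r,z)$ whose slope is intrinsic to $\phi$ at $(r,z)$ alone, independent of $\theta_0$. Slice points lying on the negative $x_1$-axis, which is excluded from $V$, can be absorbed either by the reflection symmetry $(x_1,x_2,z)\mapsto(x_1,-x_2,z)$ of $F_\mu$ or by the $\mathcal{H}^{k-1}$-negligibility of this half-hyperplane.
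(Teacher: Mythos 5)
Your route is genuinely different from the paper's, which establishes the statement by combining a monotonicity of upper and lower densities of $F_\mu$ along circular rotations (Lemma \ref{lem: decreasing densities}) with a half-space density-sandwich argument (Proposition \ref{magic property}); nowhere does the paper pass to the polar subgraph $\tilde F$ or invoke the diffeomorphism formula for normals. Your reduction to cylindrical coordinates is valid, the Jacobian computation in the moving frame is correct, and the algebraic passage from $\nu^{\tilde F}$ to the three scalar quantities in \eqref{eq: the components of nu^Fm} is fine once you know $\nu^{\tilde F}$ on a fixed slice.

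The gap is exactly at the step you flagged, and your proposed patch does not close it. You assert that at every $(r,\theta_0,z)\in\partial^*\tilde F$ with $\theta_0>0$ the blow-up is a half-space of the non-vertical form $\{\theta' < a(r,z)r' + \mathbf{b}(r,z)\cdot z'\}$, with $a,\mathbf{b}$ ``determined by the blow-up of $\phi$ at $(r,z)$,'' and you repeat in the last paragraph that the limiting half-space ``is a graph over $(r,z)$ whose slope is intrinsic to $\phi$ at $(r,z)$ alone.'' This is false in general: if $\phi$ has a jump at $(r,z)$, say in the $r$-direction, then for $\theta_0$ strictly between the two trace values the blow-up of $\tilde F$ is a \emph{vertical} half-space of the form $\{r'<0\}$, which is not a graph over $(r,z)$, and $\phi$ has no well-defined linear blow-up at $(r,z)$ to which $a,\mathbf{b}$ could be attached. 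More fundamentally, for a generic BV function $\phi$ and a generic reduced-boundary point, there is no scalar limit $\lim_{\rho\to0}(\phi(r+\rho r',z+\rho z')-\theta_0)/\rho$ to speak of; only the \emph{set} blow-up of $\tilde F$ converges, and the heart of the proposition is precisely to show that this set blow-up is the same for all $\theta_0$ in the slice. Appealing to ``the structural description $|\theta|<\phi(r,z)$'' begs that question. The paper handles it without any pointwise structure on $\phi$: Lemma \ref{lem: decreasing densities} gives a one-sided comparison of $\mathcal{H}^k(F_\mu\cap H\cap B_\rho)$ between two slice points, and since both have density $1/2$ (being reduced-boundary points) the comparison is forced to be an equality, pinning down the rotated normal at the second point (Proposition \ref{magic property}). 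Some argument of that type -- a quantitative comparison at two distinct $\theta_0$, rather than a structural claim about a single blow-up -- is what your proof is missing.
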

\noindent
We observe that a weaker version of the above result in the Steiner setting was already known (see \cite[Remark 2.5]{barchiesicagnettifusco}). Let us now introduce some further notation that we will need in order to state the next theorem. Thanks to Proposition \ref{real magic property} we can define the Borel vector field $\bar{\nu}^{F_\mu}_{\mathsf{c}}:(0,\infty)\times\mathbb{R}^{k-2}\to \mathbb{R}^k$ as 
\begin{align}\label{def: bar(nu)}
\bar{\nu}^{F_\mu}_{\mathsf{c}}(r,z)=
\begin{cases}
\nu^{F_\mu}_{\mathsf{c}}(x,z)\qquad &\mbox{ if }(\partial^*F_\mu)_{(r,z)}\neq \emptyset,\textnormal{ and } x\in (\partial^*F_\mu)_{(r,z)},  \vspace*{0.1cm}\\
0\qquad &\mbox{ otherwise}.
\end{cases}
\end{align}
\noindent
Proposition \ref{real magic property} is the new ingredient for the characterisation of equality cases for the perimeter inequality under circular symmetrisation. In the following, we define the diffeomorphism $\Phi: (0, \infty) \times \mathbb{R}^{k-2}  \times \mathbb{S}^1 \to \mathbb{R}^2_0 \times \mathbb{R}^{k-2}$ as:
\begin{align*}
\Phi (r, z, \omega) := (r \omega, z) \quad &\text{ for every } (r,z, \omega) \in (0, \infty) \times \mathbb{R}^{k-2}  \times \mathbb{S}^1.
\end{align*}
Thus, more in general, for every Borel set $B\subset (0,\infty)\times \mathbb{R}^{k-2}$, we set
\begin{align*}
\Phi (B\times\mathbb{S}^1) := \left\{(x,z)\in \mathbb{R}^k:\, (|x|,z)\in B  \right\}.
\end{align*}
We can now state our main result.
\begin{theorem}\label{Eq. cases per. ineq.}
Let $\mu  : (0, \infty) \times \R^{k-2} \to [0, \infty)$ be a Lebesgue measurable function
satisfying \eqref{compatibility for mu}, let $U\subset (0, \infty) \times \R^{k-2}$ be an open set, and let $E\subset \mathbb{R}^k$ be a $\mu$-distributed set such that $E$ has finite perimeter in $\Phi(U\times\mathbb{S}^1)$. Then,  $F_\mu$ has finite perimeter in $\Phi(U\times\mathbb{S}^1)$ and
\begin{align}\label{eq: circular per ineq}
P (F_\mu; \Phi (B \times \mathbb{S}^{1})) \leq P (E; \Phi  (B \times \mathbb{S}^{1})),\quad \forall\,B \subset U \textnormal{ Borel}.
\end{align} 
Moreover, equality holds in \eqref{eq: circular per ineq} for some Borel set $B\subset U$ if and only if both the following two conditions are satisfied.
\begin{itemize}
\vspace{.1cm}
\item[a)]For $\mathcal{L}^{k-1}$-a.e. $(r,z)\in B$ we have that $(E)_{(r,z)}$ is $\mathcal{H}^1$-equivalent to a connected arc in $\R^2$.
\vspace{.1cm}
\item[b)] There exists $N\subset \partial^*E$ with $\mathcal{H}^{k-1}(N)=0$, with the property that \textbf{for every} $(r,z)\in B$ such that $( \partial^* E\setminus N )_{(r, z)} \neq \emptyset$, and $(\partial^*F_\mu)_{(r,z)} \neq \emptyset$, we have that
\begin{align}\label{eq: magic property for E}
\nu^{E}_{\mathsf{c}}(x,z)=\bar{\nu}^{F_\mu}_{\mathsf{c}}(r,z)\quad \forall x\in (\partial^*E\setminus N)_{(r,z)}.
\end{align}
\end{itemize}
\end{theorem}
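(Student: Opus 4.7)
The strategy is the one suggested in the introduction: introduce a measure associated to the distribution function $\mu$, obtain a short direct proof of the perimeter inequality by a slicing/coarea decomposition, and then read off the equality cases by tracking every pointwise estimate used.

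\emph{Step 1: slicing and splitting of the perimeter.} I would work in the coordinates $(r,z,\omega)\in(0,\infty)\times\mathbb{R}^{k-2}\times\mathbb{S}^1$ induced by $\Phi$, where the $k$-dimensional volume element carries a weight $r$. By Vol'pert-type slicing (cf.\ Section \ref{preliminaries}), for $\mathcal{L}^{k-1}$-a.e.\ $(r,z)$ the circular slice $E_{(r,z)}$ is a one-dimensional set of finite perimeter inside $\partial B(r)$. A coarea decomposition with respect to the projection $(x,z)\mapsto(|x|,z)$ then splits $P(E;\Phi(B\times\mathbb{S}^1))$ into a ``circular'' part, involving $\mathcal{H}^0((\partial^*E)_{(r,z)})$ and the tangential component $|\nu^E_{x\scriptscriptstyle\parallel}|$, and a ``radial+vertical'' part, involving $\hat x\cdot\nu^E_x$ and $\nu^E_z$, both integrated against $d\mathcal{L}^{k-1}(r,z)$. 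The same decomposition applied to $F_\mu$ is the key: by Proposition \ref{real magic property} the normal is constant along each non-empty slice of $\partial^*F_\mu$ and equal to $\bar\nu^{F_\mu}_{\mathsf c}(r,z)$, and the slice is (essentially) an arc with $\mathcal{H}^0((\partial^*F_\mu)_{(r,z)})\in\{0,2\}$, so the decomposition collapses to a single explicit integrand $\Psi(\mu(r,z),\bar\nu^{F_\mu}_{\mathsf c}(r,z),r)$.

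\emph{Step 2: the inequality.} Comparing the two decompositions slice by slice, three pointwise estimates have to be proved on a.e.\ slice $(r,z)$:
\begin{itemize}
\item[(i)] $\mathcal{H}^0((\partial^*E)_{(r,z)})\ge 2$ whenever $0<\mu(r,z)<2\pi r$, with equality iff $E_{(r,z)}$ is $\mathcal{H}^1$-equivalent to a connected arc;
\item[(ii)] the slicewise integrals of $\hat x\cdot\nu^E_x$ and of each component of $\nu^E_z$ are controlled from below by twice the corresponding entries of $\bar\nu^{F_\mu}_{\mathsf c}(r,z)$, by a direct Cauchy--Schwarz/Jensen comparison with the arc;
\item[(iii)] the slicewise integral of $|\nu^E_{x\scriptscriptstyle\parallel}|$ is at least $2\,|\bar\nu^{F_\mu}_{\mathsf c,\,x\scriptscriptstyle\parallel}(r,z)|$, which is the total variation of $\chi_{F_\mu}$ on the corresponding slice of $\partial B(r)$.
\end{itemize}
Summing (i)--(iii) slicewise and integrating in $(r,z)\in B$ yields \eqref{eq: circular per ineq}; in particular $F_\mu$ has finite perimeter in $\Phi(U\times\mathbb{S}^1)$.

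\emph{Step 3: equality cases.} If equality holds in \eqref{eq: circular per ineq} on $B$, each of the pointwise inequalities (i)--(iii) must saturate for $\mathcal{L}^{k-1}$-a.e.\ $(r,z)\in B$. Saturation of (i) gives condition (a). Saturation of (ii)--(iii), combined with Proposition \ref{real magic property} applied to $F_\mu$, forces $\nu^E_{\mathsf c}(x,z)=\bar\nu^{F_\mu}_{\mathsf c}(r,z)$ for $\mathcal{H}^0$-a.e.\ $x\in(\partial^*E)_{(r,z)}$; collecting the resulting $\mathcal{H}^{k-1}$-null exceptional set as $N$ yields condition (b). Conversely, under (a) and (b) each of (i)--(iii) is an equality on a.e.\ slice, and reversing the chain gives equality in \eqref{eq: circular per ineq}.

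\emph{Main obstacle.} The principal technical difficulty is making the coarea splitting rigorous at degenerate slices where $\mu(r,z)\in\{0,2\pi r\}$ (the arc either disappears or fills the circle), and reconciling the sets $\{(r,z):(\partial^*E)_{(r,z)}\neq\emptyset\}$ and $\{(r,z):(\partial^*F_\mu)_{(r,z)}\neq\emptyset\}$, which need not agree. A subtler issue is the simultaneous construction of the exceptional set $N$ in (b): it must absorb the $\mathcal{H}^{k-1}$-null sets produced by the slicing, by Proposition \ref{real magic property}, and by the choice of precise representative of $\nu^E$, all at once.
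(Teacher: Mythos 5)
Your plan shares the high-level ingredients with the paper (the measure $\sigma_\mu$, Vol'pert, coarea, Proposition~\ref{real magic property}), but Step~2 departs from the paper's argument in a way that does not work. Item~(ii) claims a slicewise lower bound for the $\mathcal{H}^0$-integrals of $\hat x\cdot\nu^E_x$ and of $\nu^E_z$ over $(\partial^*E)_{(r,z)}$; no such slicewise inequality is available. The coarea formula (Proposition~\ref{coarea for sets}) only converts $\mathcal{H}^{k-1}$-integrals over $\partial^*E$ that carry the weight $|\nu^E_{x\scriptscriptstyle\parallel}|$ into iterated $\mathcal{H}^0$-slice integrals; the radial and vertical components are instead tied to $D_r\mu$, $D_z\mu$ by the exact measure identities \eqref{Dz mu for E}--\eqref{rDr xi for E} of Lemma~\ref{lem: mu e xi in BV loc}, which are equalities, not inequalities, and which do not decompose slicewise because $D_r\mu$ and $D_z\mu$ may have singular parts concentrated on $\mathcal{L}^{k-1}$-null sets, i.e.\ precisely on slices where Vol'pert's theorem fails. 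Moreover, even granting (i)--(iii), they would not assemble into $P(F_\mu;\cdot)\leq P(E;\cdot)$: the perimeter is $\int_{\partial^*E}|\nu^E|\,d\mathcal H^{k-1}$, the $L^1$-norm of a vector measure, and componentwise lower bounds do not sum to a bound on the Euclidean norm without an explicit coupling.

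The paper supplies that coupling in one stroke. It proves $|\sigma_\mu|(B)=P(F_\mu;\Phi(B\times\mathbb S^1))$ (Lemma~\ref{lem: P(F[v])=|sigma|}) and identifies the polar factor $d\sigma_\mu/d|\sigma_\mu|=\bar\nu^{F_\mu}_{\mathsf c}$ by Remark~\eqref{eq: dsigma/d|sigma =| nu^Fmu_circ}, so that
$$
|\sigma_\mu|(B)=\int_B\bar\nu^{F_\mu}_{\mathsf c}\cdot d\sigma_\mu
\ \le\ \int_{\partial^*E\cap\Phi(B\times\mathbb S^1)}\bar\nu^{F_\mu}_{\mathsf c}(|x|,z)\cdot\nu^E_{\mathsf c}(x,z)\,d\mathcal H^{k-1}
\ \le\ P(E;\Phi(B\times\mathbb S^1)),
$$
where the first inequality is \eqref{eq: the real calc lemmata weaker} (exact in the radial and vertical slots, and the coarea-plus-Vol'pert estimate $\mathcal H^0((\partial^*E)_{(r,z)})\ge 2$ in the middle slot, which is your item~(i)) and the second is a single Cauchy--Schwarz between the two unit vectors $\bar\nu^{F_\mu}_{\mathsf c}$ and $\nu^E_{\mathsf c}$. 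Tracking the two inequalities gives exactly conditions a) and b): the first saturates iff the slices of $E$ are arcs, the second saturates iff $\bar\nu^{F_\mu}_{\mathsf c}(|x|,z)\cdot\nu^E_{\mathsf c}(x,z)=1$ $\mathcal H^{k-1}$-a.e., whence the null set $N$. You should replace items (ii) and (iii) by this global Cauchy--Schwarz after $\sigma_\mu$ has been identified with $P(F_\mu;\cdot)$; trying to prove them on a.e.\ slice is the gap.
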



\begin{remark}
By definition of $\nu^E_{\mathsf{c}}$, condition b) of the above result implies that \textbf{for every} $(r,z)\in B$ such that $( \partial^* E\setminus N )_{(r, z)} \neq \emptyset$, and $(\partial^*F_\mu)_{(r,z)} \neq \emptyset$ the functions
\begin{align*}
x \mapsto \hat{x} \cdot \nu^{E}_x ( x, z), 
\, \,  \, 
x \mapsto | \nu^{E}_{\!  x {\scriptscriptstyle\parallel}} ( x, z)|, \, \,\, 
x \mapsto \nu^{E}_z ( x, z),
\end{align*}
are constant in $(\partial^*E\setminus N)_{(r,z)}$.
\end{remark}
Roughly speaking, we can say that condition b) of Theorem \ref{Eq. cases per. ineq.} holds true if and only if the symmetric properties of $\nu^{F_\mu}$ described by Proposition \ref{real magic property} holds true also for $\nu^E$. Let us point out that in \cite[Theorem 1.4]{CagnettiPeruginiStoger} condition a) and a weaker version of condition b) were shown to be necessary condition for a set $E$ to be an extremal of \eqref{eq: circular per ineq}. In particular, condition b) of \cite[Theorem 1.4]{CagnettiPeruginiStoger} (see also condition b) of \cite[Theorem 1.1]{barchiesicagnettifusco}) was only discussed for $\mathcal{L}^{k-1}$-a.e. $(r,z)\in B$, and no information was given on the $\mathcal{L}^{k-1}$-negligible subset of $B$  where coarea formula cannot be used. In order to clarify the meaning of condition b) of Theorem \ref{Eq. cases per. ineq.}, let us give some examples.

\begin{figure}[!htb]
\centering
\def\svgwidth{13cm}
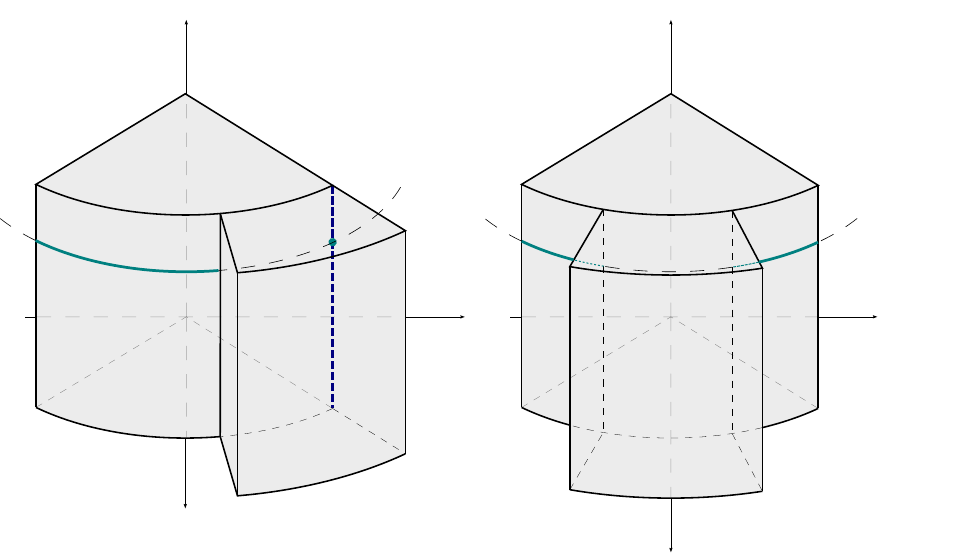
\caption{A pictorial representation of a $\mu$-distributed set $E\subset \mathbb{R}^3$ that satisfies both conditions a) and b) of Theorem \ref{Eq. cases per. ineq.}, thus being an equality case for \eqref{eq: circular per ineq}.}
\label{fig: eq cases}
\end{figure}

\begin{figure}[!htb]
\centering
\def\svgwidth{13cm}
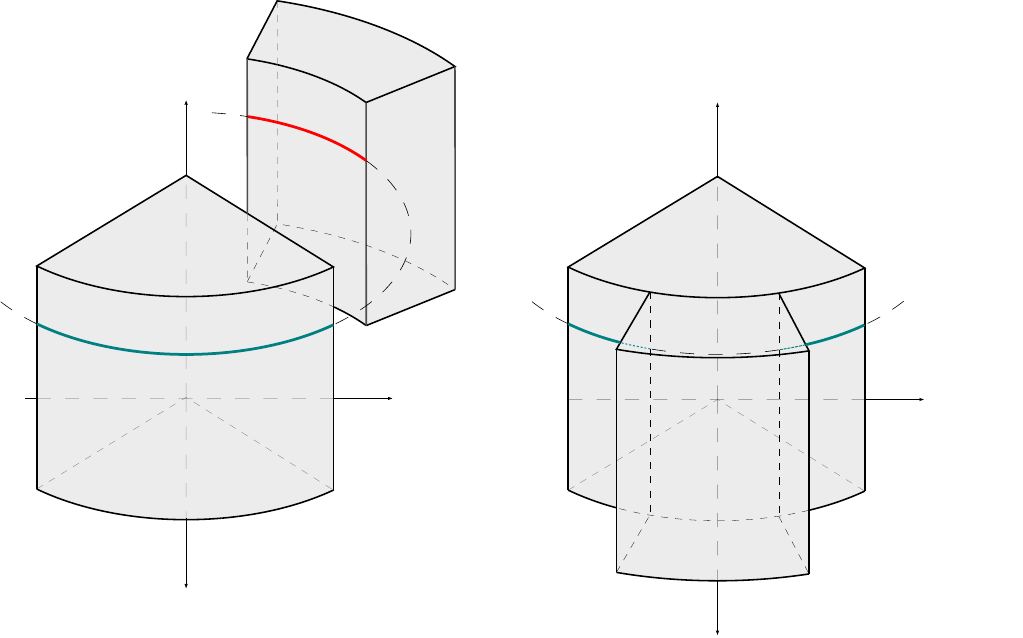
\caption{A pictorial representation of a $\mu$-distributed set $E_1\subset\mathbb{R}^3$ that does not satisfy condition b) of Theorem \ref{Eq. cases per. ineq.}.}
\label{fig: NO eq cases}
\end{figure}

\begin{example}[Case of equality]\label{ex: cases of equality}
Let us explain with an example in $\mathbb{R}^3$ the meaning of condition b) of Theorem \ref{Eq. cases per. ineq.}. In Figure \ref{fig: eq cases} we marked in green the sets $\left((\partial^*E_{(r,z)}),z\right)$, and $\left((\partial^*F_\mu)_{(r,z)},z\right)$ for some $(r,z)\in (0,\infty)\times\mathbb{R}$ (in the picture, with a little abuse of notation, we simply call them $(\partial^*E)_{(r,z)}$, and $(\partial^*F_\mu)_{(r,z)}$, respectively). In the picture in the left one can notice the $\mathcal{H}^2$-negligible set $N$, which is represented by the blue dashed vertical line. Let us point out that the isolated green dot appearing in the left picture is indeed part of $\left((\partial^*E_{(r,z)}),z\right)$, and it coincides with $\left((\partial^*E_{(r,z)}),z\right)\cap N$. It can be shown that $\nu^{E}_{\mathsf{c}}$ evaluated at that isolated point differs from $\nu^{E}_{\mathsf{c}}$ evaluated at any other point of $\left((\partial^*E_{(r,z)}),z\right)\setminus N$. Nonetheless, $\nu^{E}_{\mathsf{c}}$ is constant when restricted to $\left((\partial^*E_{(r,z)}),z\right)\setminus N$ and it coincides with $\nu^{F_\mu}_{\mathsf{c}}(x,z)$ restricted to $\left((\partial^*F_\mu)_{(r,z)},z\right)$, namely $\nu^E_{\mathsf{c}}(x,z)=\bar{\nu}^{F_\mu}_{\mathsf{c}}(r,z)$ for all $x \in (\partial^*E\setminus N)_{(r,z)}$. Thus, condition b) of Theorem \ref{Eq. cases per. ineq.} holds true. 
\end{example}

\begin{example}[Non equality case]
In Figure \ref{fig: NO eq cases} we show an example of a $\mu$-distributed set $E_1\subset \mathbb{R}^3$ that does not satisfy condition b) of Theorem \ref{Eq. cases per. ineq.}. Indeed, it can be shown that $\nu^{E_1}_{\mathsf{c}}(x,z)$ changes depending on weather $(x,z)$ belongs to the green or to the red part of $((\partial^*E_1)_{(r,z)},z)$. Note that this phenomenon cannot be avoided by removing an $\mathcal{H}^2$-negligible set from $\partial^*E_1$. Thus, condition b) is not satisfied and therefore $E_1$ is not an extremal of \eqref{eq: circular per ineq}. Let us stress that, despite the set $E_1$ does not satisfy condition b) of Theorem \ref{Eq. cases per. ineq.}, it does satisfy all the necessary conditions in order to be a case of equality for \eqref{eq: circular per ineq} that are listed in \cite[Theorem 1.4]{CagnettiPeruginiStoger}.
\end{example}

\noindent
Theorem \ref{Eq. cases per. ineq.} is a refinement of \cite[Theorem 1.4]{CagnettiPeruginiStoger}, where the inequality \eqref{eq: circular per ineq} was already stated without an explicit proof.  Let us stress that, apart from some technical intermediate results, the arguments we use to prove Theorem \ref{Eq. cases per. ineq.} differ from the standard ones used while proving perimeter inequalities under symmetrisation (see once more \cite[Theorem 1.1]{ChlebikCianchiFuscoAnnals05}, and  \cite[Theorem 1.1]{CagnettiPeruginiStoger}), and deeply rely on the new information given by Proposition \ref{real magic property} about the symmetral set $F_\mu$. Indeed, as a consequence of that, our proof of \eqref{eq: circular per ineq} is much more direct, and leads quite simply to the characterisation of the equality cases.

\medskip
Finally, we are able to show that analogous results hold true for the Steiner symmetrisation (see Theorem \ref{Steiner Eq. cases per. ineq.}). In fact, we believe that our short proof of \eqref{eq: circular per ineq} and the techniques we used to show Theorem \ref{Eq. cases per. ineq.} can be adapted to other symmetrisation procedures, and that they can be helpful in simplifying the study of \emph{rigidity} of perimeter inequality under symmetrisation.
\subsection*{Structure of the paper} The paper is divided as follows. In Section \ref{preliminaries} we recall some basic notions of geometric measure theory and functions of bounded variation. In Section \ref{Sec: properties of Fmu and mu} for the reader convenience we start off by stating once more the precise notation we will use throughout the paper, and then we focus on proving Proposition \ref{real magic property} and other technical results we will need later on. In Section \ref{sec: characterisation of equality cases} we present the proof of Theorem \ref{Eq. cases per. ineq.}. Lastly, in Section \ref{sec: Steiner setting} we state, without proofs, the Steiner counterpart of the results we obtained for the circular symmetrisation, thus including a Steiner version of both Proposition \ref{real magic property}, and Theorem \ref{Eq. cases per. ineq.} (see Proposition \ref{Steiner real magic property}, and Theorem \ref{Steiner Eq. cases per. ineq.}, respectively).

\subsection*{Acknowledgements}
The author would like to thank Filippo Cagnetti for his valuable comments and for inspiring conversations about the subject. 

\section{Fundamentals of geometric measure theory}
\label{preliminaries}
The aim of this section is to introduce some basic concepts of Geometric Measure Theory that will be largely used in the article. 
For more details the reader can have a look in the monographs 
\cite{AFP, GMSbook1, maggiBOOK, Simon83}. For $(x,z)\in\mathbb{R}^k$ and $\nu\in \mathbb{S}^{k-1}$, we will denote by $H_{(x,z),\nu}^+$ and $H_{(x,z),\nu}^-$ the closed half-spaces
whose boundaries are orthogonal to $\nu$:
\begin{align*}
H_{(x,z),\nu}^+:=\Big\{(\bar{x},\bar{z})\in\R^k:\,(x-\bar{x},z-\bar{z})\cdot\nu\ge 0\Big\},\quad
H_{(x,z),\nu}^-:=\Big\{(\bar{x},\bar{z})\in\R^k:\,(x-\bar{x},z-\bar{z})\cdot\nu\le 0\Big\}.
\end{align*}
In the following, given a measurable set $E\subset \mathbb{R}^k$ we will denote by $\chi_E$ its characteristic function, while the $k$-dimensional ball of $\mathbb{R}^k$ of radius $r>0$ and center in $(x,z)$ is denoted with $B_r(x,z)$.

\subsection{Density points} 
Let $E \subset \R^k$ be a Lebesgue measurable set and let $(x,z)\in\R^k$. 
The upper and lower $k$-dimensional densities of $E$ at $(x,z)$ are defined as
\begin{eqnarray*}
  \theta^*(E,(x,z)) :=\limsup_{\rho\to 0^+}\frac{\mathcal{H}^k(E\cap B_\rho(x,z))}{\omega_k\,\rho^k}\,,
  \qquad
  \theta_*(E,(x,z)) :=\liminf_{\rho\to 0^+}\frac{\mathcal{H}^k(E\cap B_\rho(x,z))}{\omega_k\,\rho^k}\,,
\end{eqnarray*}
respectively, where $\omega_k\, \rho^k =\mathcal{H}^k(B_\rho(x,z))$.
It turns out that $(x,z) \mapsto  \theta^*(E,(x,z))$ and $(x,z) \mapsto  \theta_*(E,(x,z))$
are Borel functions that agree $\mathcal{H}^k$-a.e. on $\R^k$. 
Therefore, the $k$-dimensional density of $E$ at $(x,z)$
\[
\theta(E,(x,z)) := \lim_{\rho\to 0^+}\frac{\mathcal{H}^k(E\cap B_\rho(x,z))}{\omega_k\,\rho^k}\,,
\]
is defined for $\mathcal{H}^k$-a.e. $(x,z)\in\R^k$, and $(x,z) \mapsto  \theta (E,(x,z))$
is a Borel function on $\R^k$.
Given $t \in [0,1]$, we set
$$
E^{(t)} :=\{(x,z)\in\R^k:\theta(E,(x,z))=t\}.
$$
The set $\partial^{\mathrm{e}} E :=\R^n\setminus(E^{(0)}\cup E^{(1)})$ is called the \textit{essential boundary} of $E$.

\subsection{Functions of bounded variation} 
Let $f:(0,\infty)\times \mathbb{R}^{k-2}  \to\R$ be a Lebesgue measurable function, 
and let $\Omega\subset (0,\infty)\times \mathbb{R}^{k-2}$ be open, such that $f\in L^1(\Omega)$. Then we say that $f$ is of bounded variation in $\Omega$, and we write $f\in BV(\Omega)$ if and only if 
\begin{align}\label{def: total variation of Df, con f in BV}
\sup\Big\{\int_\Omega\,f(r,z)\,\diver\,T(r,z)\,dr\,dz:\,T\in C^1_c(\Omega;\R^{k-1})\,,|T|\le 1\Big\}<\infty,
\end{align}
where $C^1_c(\Omega;\R^{k-1})$ is the set of $C^1$ functions from $\Omega$ to $\R^{k-1}$ with compact support. More in general, we say that $f\in BV_{\textnormal{loc}}(\Omega)$ if $f\in BV(\Omega')$ for every open set $\Omega'$ compactly contained in $\Omega$. If $f\in BV_{\textnormal{loc}}(\Omega)$ the distributional derivative $Df$ of $f$ is representable as a $\mathbb{R}^{k-1}$-valued Radon measure defined on $\Omega$, and its total variation $|Df|$ is finite in $\Omega$, and its value $|Df|(\Omega)$ coincides with \eqref{def: total variation of Df, con f in BV}. Moreover, for every $T\in C^1_c(\Omega;\mathbb{R}^{k-1})$ we have
\begin{align*}
\int_{\Omega}f(r,z)\textnormal{div}\,T(r,z)\,dr\,dz = -\int_{\Omega}T(r,z)\cdot dDf(r,z).
\end{align*} 
One can write the  Radon--Nykodim decomposition of $Df$ with respect to $\mathcal{L}^{k-1}$
as $Df=D^af+D^sf$, where $D^sf$ and $\mathcal{L}^{k-1}$ are mutually singular, and where $D^af\ll\mathcal{L}^{k-1}$. We denote the density of $D^af$ with respect to $\mathcal{L}^{k-1}$ by $\nabla f$, 
so that $\nabla\,f\in L^1(\Omega;\R^{k-1})$ with $D^af=\nabla f\,d\mathcal{L}^{k-1}$. Moreover, for $\mathcal{L}^{k-1}$-a.e. $(r,z)\in \Omega$, $\nabla f(r,z)$ is the approximate differential of $f$ at $(r,z)$. 

\subsection{Sets of finite perimeter}\label{section sofp}

\noindent
Let $E\subset\R^k$ be a Lebesgue measurable set, and let  $O\subset \mathbb{R}^k$ be an open set. We say that  $E\subset \mathbb{R}^k$ is a set of finite perimeter in $O$ if and only if 
\begin{align}\label{def: locally finite perimeter of a set in R^k}
\sup\left\{\int_{\mathbb{R}^k}\chi_E(x,z)\textnormal{div}_{(x,z)}\,T(x,z)\,dx\,dz:\, T\in C^1_c(O;\mathbb{R}^k)  \right\}<\infty,
\end{align}
where by $\textnormal{div}_{(x,z)}$ we mean the classical divergence in $\mathbb{R}^k$ w.r.t. the variables $(x,z)$. If $E\subset \mathbb{R}^k$ is a set of finite perimeter in $O$, we denote with $P(E;O)$ its relative perimeter in $O$, where $P(E;O)$ coincides with the quantity in \eqref{def: locally finite perimeter of a set in R^k}. If $P(E):=P(E;\mathbb{R}^k)<\infty$ we say that $E$ is a set of finite perimeter, while more generally if $P(E;V)<\infty$ for every $V\subset\subset O$, we say that $E$ is a set of locally finite perimeter in $O$. If $E\subset \mathbb{R}^k$ is a set of finite perimeter and finite volume in $O$, then we have that $\chi_E\in BV(O)$, while in general if $E\subset \mathbb{R}^k$ is a set of finite perimeter in $O$ then $\chi_E\in BV_{\textnormal{loc}}(O)$. Moreover, if $E\subset \mathbb{R}^k$ is a set of locally finite perimeter in $O$ we define the \emph{reduced boundary} $\partial^*E\subset \mathbb{R}^k$ of $E$ as the set of those points such that
\begin{align*}
\nu^E(x,z):=\lim_{\rho\to 0^+}\frac{D\chi_E(B_\rho(x,z))}{|D\chi_E|(B_\rho(x,z))},
\end{align*}
exists and belongs to $\mathbb{S}^{k-1}$. The Borel function $\nu^E:\partial^*E\to \mathbb{S}^{k-1}$ 
is called the {\it (measure-theoretic) inner unit normal} to $E$. Given $E\subset \mathbb{R}^k$ set of locally finite perimeter in $O$, we have that $D\chi_E=\nu^E\mathcal{H}^{k-1}\mres (\partial^*E\cap O)$ and,
\begin{align*}
\int_{\mathbb{R}^k}\chi_E(x,z)\textnormal{div}_{(x,z)}\,T(x,z)\,dx\,dz=-\int_{\partial^*E\cap O}T(x,z)\cdot \nu^E(x,z)\,d\mathcal{H}^{k-1}(x,z),\quad \forall\,T\in C^1_c(O;\mathbb{R}^k).
\end{align*}
The relative perimeter of $E$ in $A\subset O$ is then defined by
$$
P(E;A):=|D\chi_E|(A)=\mathcal{H}^{k-1}(\partial^*E\cap A)
$$
for every Borel set $A \subset O$. If $E$ is a set of locally finite perimeter in $O$, it turns out that 
\begin{equation*}
  \label{inclusioni frontiere}
  (\partial^*E\cap O)  \subset (E^{(1/2)}\cap O) \subset (\partial^{\mathrm{e}} E\cap O)\,.
\end{equation*}
Moreover, {\it Federer's theorem} holds true (see \cite[Theorem 3.61]{AFP} and \cite[Theorem 16.2]{maggiBOOK}):
\[
\mathcal{H}^{n-1}((\partial^{\mathrm{e}} E\cap O)\setminus(\partial^*E\cap O))=0.
\]

\section{Properties of $F_\mu$ and $\mu$}\label{Sec: properties of Fmu and mu}
\noindent
We start this section stating two important results. The first one, is a special case of Coarea Formula (see
\cite[Proposition~6.1]{CagnettiPeruginiStoger}, and \cite[Theorem 18.8]{maggiBOOK}). In the following, given $O\subset \mathbb{R}^{k}$ open set, and given $E\subset\mathbb{R}^k$ set of locally finite perimeter in $O$, we denote with $L^1(\mathbb{R}^{k},\mathcal{H}^{k-1}\mres \partial^*E\cap O)$ the space of integrable functions from $\mathbb{R}^k$ to $\mathbb{R}$ w.r.t. the Radon measure $\mathcal{H}^{k-1}\mres \partial^*E\cap O$.
\begin{proposition} \label{coarea for sets} 
Let $\mu  : (0, \infty) \times \R^{k-2} \to [0, \infty)$ be a Lebesgue measurable function
satisfying \eqref{compatibility for mu}, let $U\subset (0, \infty) \times \R^{k-2}$ be an open set, and let $E\subset \mathbb{R}^k$ be a $\mu$-distributed set such that $E$ has finite perimeter in $\Phi(U\times\mathbb{S}^1)$. Let $g:\R^{k} \rightarrow [-\infty,\infty]$ be a Borel function, such that either $g\geq 0$ on $\partial^*E\cap \Phi(U\times\mathbb{S}^1)$, or $g\in L^1(\mathbb{R}^{k},\mathcal{H}^{k-1}\mres \partial^*E\cap \Phi(U\times\mathbb{S}^1))$.
Then, 
\begin{align*}
\int_{\partial^* E\cap \Phi(U\times\mathbb{S}^1)} g(x, z) | \nu^{E}_{\!  x {\scriptscriptstyle\parallel}} (x,z)| \, d\mathcal{H}^{k-1} (x, z) 
= \int_{U} dr \, dz \int_{(\partial^* E)_{(r, z) }} g(x, z) \, d\mathcal{H}^{0}(x).
\end{align*}
\end{proposition}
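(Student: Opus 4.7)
The plan is to reduce the statement to the standard coarea formula on a $(k-1)$-rectifiable set (\cite[Theorem 18.8]{maggiBOOK}) applied to the Lipschitz map $T:\mathbb{R}^2_0\times\mathbb{R}^{k-2}\to(0,\infty)\times\mathbb{R}^{k-2}$ defined by $T(x,z)=(|x|,z)$, viewed as a map on the rectifiable set $\partial^*E\cap\Phi(U\times\mathbb{S}^1)$. Since $U\subset(0,\infty)\times\mathbb{R}^{k-2}$, the integration domain stays at positive distance from $\{x=0\}$ on any compact set, so $T$ is locally Lipschitz there, which is all that is needed. I would handle the nonnegative case first; the integrable case then follows by splitting $g=g^+-g^-$.

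Once the coarea machinery is in place, two identifications remain. First, I identify the fibers: for $(r,z)\in U$,
\[
T^{-1}(r,z)\cap\partial^*E=\{(x,z):\,|x|=r,\,(x,z)\in\partial^*E\}=(\partial^*E)_{(r,z)}\times\{z\},
\]
so the inner integral produced by the coarea formula is exactly $\int_{(\partial^*E)_{(r,z)}}g(x,z)\,d\mathcal{H}^0(x)$. Second—and this is the main point—I would compute the tangential Jacobian $J^{\partial^*E}T(x,z)$. Using the standard identity for a Lipschitz map $\mathbb{R}^k\to\mathbb{R}^{k-1}$ restricted to a hypersurface with inner unit normal $\nu=\nu^E(x,z)$,
\[
J^{\partial^*E}T(x,z)=\bigl|\det\bigl(\nabla T_1,\,\nabla T_2,\,\ldots,\,\nabla T_{k-1},\,\nu\bigr)\bigr|,
\]
where the rows are $\nabla T_1=(\hat{x},0)$, $\nabla T_j=(0,e_{j-1})$ for $j=2,\ldots,k-1$, and $\nu=(\nu_1,\ldots,\nu_k)$.

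Permuting the rows so that the $(k-2)\times(k-2)$ identity block coming from the $z$-derivatives sits as a lower-right block reveals a block-triangular structure with top-left $2\times 2$ block $\bigl(\begin{smallmatrix}\hat{x}_1&\hat{x}_2\\\nu_1&\nu_2\end{smallmatrix}\bigr)$, so up to sign the determinant equals $\hat{x}_1\nu_2-\hat{x}_2\nu_1$. Writing $\hat{x}^\perp=(-\hat{x}_2,\hat{x}_1)$ and decomposing $\nu_x=(\hat{x}\cdot\nu_x)\hat{x}+(\hat{x}^\perp\cdot\nu_x)\hat{x}^\perp$, one recognizes
\[
|\hat{x}_1\nu_2-\hat{x}_2\nu_1|=|\hat{x}^\perp\cdot\nu_x|=|\nu^E_{\!x\scriptscriptstyle\parallel}(x,z)|,
\]
in view of \eqref{eq: circular decomposition of vectors}, since $\nu^E_{\!x\scriptscriptstyle\parallel}$ is exactly the component of $\nu^E_x$ orthogonal to $\hat{x}$ in $\mathbb{R}^2$. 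Plugging this Jacobian and the fiber identification into the coarea formula yields the claimed identity. The only mildly delicate step is this block-determinant computation; conceptually, $T$ records the radial and vertical coordinates and collapses the angular one, so its tangential Jacobian on $\partial^*E$ picks out precisely the component of $\nu_x^E$ tangent to the circle $\partial B(|x|)$, which is what $|\nu^E_{\!x\scriptscriptstyle\parallel}|$ measures.
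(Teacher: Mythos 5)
Your proposal is correct and follows exactly the route the paper itself indicates: the paper does not reprove the proposition but cites it as ``a special case of Coarea Formula'' with references to \cite[Proposition~6.1]{CagnettiPeruginiStoger} and \cite[Theorem 18.8]{maggiBOOK}, which is precisely the coarea formula on the rectifiable set $\partial^*E$ applied to $T(x,z)=(|x|,z)$. Your Jacobian computation $J^{\partial^*E}T=|\hat{x}_1\nu_2-\hat{x}_2\nu_1|=|\hat{x}^\perp\cdot\nu^E_x|=|\nu^E_{\!x\scriptscriptstyle\parallel}|$ and the fiber identification $T^{-1}(r,z)\cap\partial^*E\cong(\partial^*E)_{(r,z)}$ are both right, and the reduction to the nonnegative case and the local-Lipschitz exhaustion near $\{x=0\}$ are the standard technical adjustments; you have simply supplied the details the paper delegates to the references.
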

\noindent
Next result is about circular one-dimensional slices of sets of finite perimeter (see \cite[Theorem 6.2]{CagnettiPeruginiStoger}), and it can be seen as the circular counterpart of a classic result by Vol'pert (see \cite{Volpert}, and \cite[Theorem D]{cianchifusco2}).
\begin{proposition}[Vol'pert] \label{volpert theorem sets}
Let $\mu  : (0, \infty) \times \R^{k-2} \to [0, \infty)$ be a Lebesgue measurable function
satisfying \eqref{compatibility for mu}, let $U\subset (0, \infty) \times \R^{k-2}$ be an open set, and let $E\subset \mathbb{R}^k$ be a $\mu$-distributed set such that $E$ has finite perimeter in $\Phi(U\times\mathbb{S}^1)$.
Then, there exists a Borel set $G_{E} \subset (\{ \mu > 0 \}\cap U)$ with $\mathcal{L}^{k-1} ((\{ \mu > 0\}\cap U) \setminus G_{E}) = 0$ such that the following properties hold true:
\begin{itemize}
\item[(i)] for every $(r, z) \in G_{E}$:
\vspace{.2cm}
\begin{itemize}
\item[(ia)] $E_{(r,z)}$ is a set of finite perimeter in $\partial B (r)$;
\vspace{.3cm}
\item [(ib)] $\partial^* \left( E_{(r,z)} \right) = ( \partial^{*} E)_{(r,z)}$;
\end{itemize}
\end{itemize}
\end{proposition}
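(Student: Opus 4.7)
The plan is to reduce the circular slicing statement to the classical flat Vol'pert theorem by flattening the sphere $\mathbb{S}^1$ via two overlapping angular charts. The key observation is that the diffeomorphism $\Phi$ is, locally, the change of coordinates $(r,z,\theta) \mapsto (r\cos\theta, r\sin\theta, z)$, hence smooth with smooth inverse and Jacobian bounded away from zero on compact subsets of $(0,\infty) \times \mathbb{R}^{k-2} \times \mathbb{S}^1$.

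First I would cover $\mathbb{S}^1$ by two open arcs $A_1 = \mathbb{S}^1 \setminus \{(1,0)\}$ and $A_2 = \mathbb{S}^1 \setminus \{(-1,0)\}$, parametrized respectively by $\theta \in I_1 = (0,2\pi)$ and $\theta \in I_2 = (-\pi,\pi)$. For $i=1,2$ consider the pullback $E^i := \Psi_i^{-1}(E) \subset U \times I_i$, where $\Psi_i(r,z,\theta) := (r\cos\theta, r\sin\theta, z)$. Since $\Psi_i$ is a smooth diffeomorphism between open subsets of $\mathbb{R}^k$ and $E$ has finite perimeter in $\Phi(U \times \mathbb{S}^1)$, the set $E^i$ has finite perimeter in $U \times I_i$, and $\partial^* E^i = \Psi_i^{-1}(\partial^* E \cap \Psi_i(U \times I_i))$ up to an $\mathcal{H}^{k-1}$-null set.

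Next I would apply the classical flat Vol'pert theorem (\cite[Theorem D]{cianchifusco2}, or \cite[Theorem 18.11]{maggiBOOK}) to $E^i$ relative to the slicing variable $\theta$. This yields, for $\mathcal{L}^{k-1}$-a.e. $(r,z) \in U$, that $(E^i)_{(r,z)}$ is a set of finite perimeter in $I_i$ and that $\partial^*\bigl((E^i)_{(r,z)}\bigr) = (\partial^* E^i)_{(r,z)}$. I would then take the union of the two exceptional null sets, intersect with $\{\mu > 0\} \cap U$, and call the complement $G_E$. Since $\Psi_i$ restricted to $\{(r,z)\} \times I_i$ is a parametrization of $\partial B(r) \cap \mathrm{image}(\Psi_i)$ that is an isometry up to the dilation factor $r$, finite perimeter and the reduced-boundary identification transfer directly to $\partial B(r)$; gluing the two charts yields (ia) and (ib) on the full circle. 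Restricting to $\{\mu > 0\}$ is necessary because outside this set $E_{(r,z)}$ is $\mathcal{H}^1$-null and (ib) is vacuous.

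The main obstacle is (ib): inside $\mathbb{S}^1$, the slice of the reduced boundary matches the reduced boundary of the slice only away from the degenerate set where the tangential component $\nu^{E}_{\!  x {\scriptscriptstyle\parallel}}$ of the inner normal vanishes. These degenerate points contribute nothing thanks to the coarea formula of Proposition~\ref{coarea for sets} applied with $g = \chi_{\{\nu^{E}_{\!  x {\scriptscriptstyle\parallel}} = 0\}}$: the left-hand side is zero by construction, so the right-hand side forces the projection of $\{\nu^{E}_{\!  x {\scriptscriptstyle\parallel}} = 0\} \cap \partial^* E$ to the $(r,z)$-plane to be $\mathcal{L}^{k-1}$-negligible, and this projection can be absorbed into the exceptional set defining $G_E$. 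On the complementary set, a standard blow-up at a point of $\partial^* E$ in which $\nu^{E}_{\!  x {\scriptscriptstyle\parallel}} \neq 0$ produces a half-space whose intersection with $\{|x|=r\} \times \{z\}$ is a half-arc with intrinsic inner normal equal to the renormalisation $\nu^{E}_{\!  x {\scriptscriptstyle\parallel}}/|\nu^{E}_{\!  x {\scriptscriptstyle\parallel}}|$, which coincides with the inner normal of $E_{(r,z)}$ in $\partial B(r)$.
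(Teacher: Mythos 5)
The paper presents this proposition as a recall from \cite[Theorem 6.2]{CagnettiPeruginiStoger} (itself the circular counterpart of \cite[Theorem D]{cianchifusco2}) and gives no proof of its own, so there is no in-paper argument to compare against; your proof must be judged on its own merits. The chart-based reduction is sound: each $\Psi_i$ is a smooth diffeomorphism onto an open subset of $\R^k$, so $E^i$ inherits finite perimeter in $U\times I_i$ with $\partial^*E^i=\Psi_i^{-1}(\partial^*E)$ (exactly, not merely up to a null set); the flat Vol'pert theorem with slicing variable $\theta$ then gives (ia)--(ib) for $\mathcal L^{k-1}$-a.e.\ $(r,z)$ inside each chart; transporting by the scaled arclength parametrisation and gluing on the compact circle $\partial B(r)$ finishes. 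Where I would push back is the closing paragraph: it is not needed and slightly misdiagnoses the structure of the argument. The versions of Vol'pert's theorem you cite already contain, as part of their conclusion, that for a.e.\ slice the vertical (here: angular) component of $\nu^E$ is nonvanishing at every point of the sliced reduced boundary, so the coarea computation with $g=\chi_{\{\nu^{E}_{\!  x {\scriptscriptstyle\parallel}}=0\}}$ and the blow-up sketch re-derive a piece of the statement you have already invoked rather than close a genuine gap; presenting them as ``the main obstacle'' suggests you did not fully trust that the chart reduction handles (ib) by itself, which it does. Two small care points worth making explicit: you apply the flat theorem in the open sets $U\times I_i$ rather than in all of $\R^k$ (a harmless but necessary localization), and the gluing step uses that finite perimeter in two open arcs covering the compact circle $\partial B(r)$ yields finite perimeter on the whole circle.
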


\noindent
We are now going to show some useful properties of the symmetric set $F_{\mu}$. Such properties (see Proposition \ref{real magic property}) are the new fundamental ingredient with which proving the perimeter inequality under circular symmetrisation, and then characterize the cases of equality. In the following, for every $\gamma \in [-2\pi, 2\pi]$ we define $R_\gamma$ as
the the counterclockwise rotation of an angle $\gamma$ in the plane $(x_1, x_2)$. Lastly, for every $(r,z)\in (0,\infty)\times \mathbb{R}^{k-2}$ we set
\begin{align}\label{eq: weird 1-d circle}
\partial B^2_r(0,z):=\left\{(x,z)\in\mathbb{R}^2_0\times\mathbb{R}^{k-2}:\, |x|=r \right\}.
\end{align}
Roughly speaking, $\partial B^2_r(0,z)$ stands for the 1-dimensional circle in $\mathbb{R}^2_0\times \{z \}\subset \mathbb{R}^k$ centered in $(0,z)$ and having radius $r$.

\begin{lemma}\label{lem: decreasing densities}
Let $\mu  : (0, \infty) \times \mathbb{R}^{k-2} \to [0, \infty)$ be a Lebesgue measurable function satisfying \eqref{compatibility for mu}. Let $(r, z) \in (0, \infty) \times \mathbb{R}^{k-2}$, and set $x_r := (r, 0)$. 
Then, the functions 
\[
\gamma \mapsto \theta_* (F_{\mu},  ( R_\gamma x_r , z))
\quad 
\text{ and } 
\quad
\gamma \mapsto \theta^* (F_{\mu},  ( R_\gamma x_r ,z ))
\]
are even in $[-\pi, \pi]$ and non increasing in $[0, \pi]$.
\end{lemma}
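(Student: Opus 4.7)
The plan is to derive both the evenness and the monotonicity from explicit isometries of $\mathbb{R}^k$ that compare the $\mathcal{H}^k$-measure of $F_\mu$ inside small balls centered at the points $(R_\gamma x_r,z)$; the statements for the two densities then follow upon dividing by $\omega_k\rho^k$ and sending $\rho\to 0^+$.

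For the evenness, I would first verify that $F_\mu$ is invariant under the reflection $S(x_1,x_2,z'):=(x_1,-x_2,z')$: both $|w|$ and $\hat{w}\cdot e_1$ depend only on the first coordinate and on the absolute value of the second coordinate of $w\in\mathbb{R}^2_0$, so the defining inequality in \eqref{def: F_mu} is preserved by $S$. Since $S$ is an isometry of $\mathbb{R}^k$ sending $(R_\gamma x_r,z)$ to $(R_{-\gamma}x_r,z)$ and mapping $B_\rho(R_\gamma x_r,z)$ onto $B_\rho(R_{-\gamma}x_r,z)$, one obtains
\[
\mathcal{H}^k\bigl(F_\mu\cap B_\rho(R_\gamma x_r,z)\bigr)=\mathcal{H}^k\bigl(F_\mu\cap B_\rho(R_{-\gamma}x_r,z)\bigr)
\]
for every $\rho>0$, whence the evenness of both densities on $[-\pi,\pi]$.

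For the monotonicity, fix $0\leq\gamma_1<\gamma_2\leq\pi$, set $\beta:=(\gamma_1+\gamma_2)/2\in(0,\pi)$, and let $\sigma_\beta:\mathbb{R}^2\to\mathbb{R}^2$ be the reflection across the line through the origin at angle $\beta$. The map $T(w,z'):=(\sigma_\beta w,z')$ is an isometry of $\mathbb{R}^k$ that sends $(R_{\gamma_2}x_r,z)$ to $(R_{\gamma_1}x_r,z)$ and carries $B_\rho(R_{\gamma_2}x_r,z)$ onto $B_\rho(R_{\gamma_1}x_r,z)$. The key claim is that for $\rho$ sufficiently small,
\[
T\bigl(F_\mu\cap B_\rho(R_{\gamma_2}x_r,z)\bigr)\subseteq F_\mu\cap B_\rho(R_{\gamma_1}x_r,z).
\]
Because $|\sigma_\beta w|=|w|$ and the $z$-coordinate is preserved, Remark \ref{rem used for monotonicity} reduces this inclusion to the pointwise inequality $\arccos(\widehat{\sigma_\beta w}\cdot e_1)\le\arccos(\hat{w}\cdot e_1)$ for every $w$ whose $\mathbb{R}^2$-projection lies in a neighborhood of $R_{\gamma_2}x_r$. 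Both sides are continuous functions of $w\in\mathbb{R}^2_0$, taking at $R_{\gamma_2}x_r$ the values $\gamma_1$ and $\gamma_2$ respectively; since $\gamma_1<\gamma_2$, the strict inequality persists on a whole neighborhood by continuity. Isometry-invariance of $\mathcal{H}^k$ then yields $\mathcal{H}^k(F_\mu\cap B_\rho(R_{\gamma_2}x_r,z))\le\mathcal{H}^k(F_\mu\cap B_\rho(R_{\gamma_1}x_r,z))$, so dividing by $\omega_k\rho^k$ and passing to $\liminf$ or $\limsup$ as $\rho\to 0^+$ gives the required monotonicity of both $\theta_*(F_\mu,\cdot)$ and $\theta^*(F_\mu,\cdot)$ on $[0,\pi]$.

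The only real subtlety I anticipate is the boundary value $\gamma_2=\pi$, where the angular coordinate of $w$ is discontinuous at $-x_r$; this is bypassed by working with $\arccos(\hat{w}\cdot e_1)$ directly, which is a continuous function on all of $\mathbb{R}^2_0$, so the continuity argument above applies uniformly across the whole interval $[0,\pi]$.
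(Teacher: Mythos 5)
Your proof is correct. It also takes a genuinely different route from the paper's. For the monotonicity step, the paper slices $\mathbb{R}^k$ by the circles $\partial B^2_\lambda(0,\overline z)$, proves a one-dimensional $\mathcal{H}^1$-inequality on each circle (their Step 1, inequality \eqref{ferp}), and then integrates via Fubini (Step 2). You instead build a single global reflection isometry $T(w,z')=(\sigma_\beta w,z')$ of $\mathbb{R}^k$ across the angular bisector $\beta=(\gamma_1+\gamma_2)/2$, verify via Remark~\ref{rem used for monotonicity} and a continuity argument that $T$ maps $F_\mu\cap B_\rho(R_{\gamma_2}x_r,z)$ into $F_\mu\cap B_\rho(R_{\gamma_1}x_r,z)$ for $\rho$ small, and conclude by isometry-invariance of $\mathcal{H}^k$; the evenness is handled analogously with the reflection $S$ across $\{x_2=0\}$, which preserves $F_\mu$ exactly. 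Your argument is more compact and more geometrically transparent for the lemma as stated. What the paper's slicing argument buys, however, is the refined inequality \eqref{ferp} with an arbitrary Borel set $A$ inserted, which is reused with $A$ a half-space in the proof of Proposition~\ref{magic property}. Your reflection does not deliver that refinement directly, since $\sigma_\beta\circ R_{\gamma_2}=R_{\gamma_1}\circ\mathrm{Ref}$ (with $\mathrm{Ref}$ the reflection across the $e_1$-axis), so $T$ carries $T_{\gamma_2}A$ to $T_{\gamma_1}(\mathrm{Ref}\,A)$ rather than to $T_{\gamma_1}A$; if one adopted your route, a separate argument would be needed for Proposition~\ref{magic property}.
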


\begin{proof}
The fact that \( \gamma \mapsto \theta_* (F_{\mu},  ( R_\gamma x_r , z)) \)
and \( \gamma \mapsto \theta^* (F_{\mu},  ( R_\gamma x_r , z)) \)
are even in $[-\pi, \pi]$ follows directly from Remark~\ref{rem used for monotonicity}. 
We now divide the rest of the proof into two steps.

\vspace{.2cm}

\noindent{\textbf{Step 1:}} We show that, if 
$0 \leq \gamma_1 < \gamma_2 \leq \pi$, and $\rho > 0$ is so small that  
\begin{equation} \label{ssss}
B_\rho ((R_{\gamma_1} x_r, z)) \cap B_\rho ( (R_{\gamma_2} x_r, z) ) = \emptyset,
\end{equation}
 then for every $(\lambda, \overline{z}) 
\in (0, \infty) \times \mathbb{R}^{k-2}$ one has
\begin{align} 
&\mathcal{H}^1 \Big( F_{\mu} \cap \partial B^2_\lambda (0,\overline{z} )
\cap B_\rho ( (R_{\gamma_2} x_r,z)) \cap T_{\gamma_2} A \Big) \nonumber\\
&\leq \mathcal{H}^1 \Big( F_{\mu} \cap \partial B^2_\lambda (0, \overline{z})
\cap B_\rho ( (R_{\gamma_1} x_r, z)) \cap T_{\gamma_1} A \Big), \label{ferp}
\end{align}
for every $A \subset \mathbb{R}^{k}$ where, for every $\gamma \in [-\pi, \pi]$, we set
\[
T_\gamma (x, z) := (R_\gamma x, z).
\]

If \(F_{\mu} \cap B_\rho ( (R_{\gamma_2} x_r, z)) \cap \partial B^2_\lambda (0, \overline{z}) = \emptyset \), 
 the left hand side of \eqref{ferp} equals $0$ 
and therefore the inequality is satisfied. 
Instead, suppose that 
\[
F_{\mu} \cap B_\rho ( (R_{\gamma_2} x_r, z)) \cap \partial B^2_\lambda (0, \overline{z} ) \neq \emptyset. 
\]
Then, from \eqref{ssss} and Remark~\ref{rem used for monotonicity} we have
\[
F_{\mu} \cap B_\rho ( (R_{\gamma_1} x_r, z)) \cap \partial B^2_\lambda (0,\overline{z} ) 
= B_\rho ( (R_{\gamma_1} x_r,z)) \cap \partial B^2_\lambda (0, \overline{z} ).
\]
Therefore, 
\begin{align*}
&\mathcal{H}^1 \Big( F_{\mu} \cap \partial B^2_\lambda (0, \overline{z})
\cap B_\rho ( (R_{\gamma_2} x_r,z)) \cap T_{\gamma_2} A \Big) \\
&\leq \mathcal{H}^1 \Big( \partial B^2_\lambda (0, \overline{z} )
\cap B_\rho ( (R_{\gamma_2} x_r, z)) \cap T_{\gamma_2} A \Big) \\
&= \mathcal{H}^1 \Big( \partial B^2_\lambda (0, \overline{z} )
\cap B_\rho ( (R_{\gamma_1} x_r, z)) \cap T_{\gamma_1} A \Big) \\
&= \mathcal{H}^1 \Big( F_{\mu} \cap \partial B^2_\lambda (0, \overline{z})
\cap B_\rho ( (R_{\gamma_1} x_r, z)) \cap T_{\gamma_1} A \Big),
\end{align*}
which gives \eqref{ferp}.

\vspace{.2cm}

\noindent{\textbf{Step 2:}} We will show that 
if $0 \leq \gamma_1 < \gamma_2 \leq \pi$, then

\[
\theta_* (F_{\mu},  ( R_{\gamma_2} x_r,z))
\leq 
\theta_* (F_{\mu},  ( R_{\gamma_1} x_r, z)),
\]
and
\[
\theta^* (F_{\mu},  ( R_{\gamma_2} x_r, z))
\leq 
\theta^* (F_{\mu},  ( R_{\gamma_1} x_r, z)).
\]
Let $\rho > 0$ be such that \eqref{ssss} is satisfied. Then,
\begin{align*}
&\mathcal{H}^{k} (B_\rho  ( R_{\gamma_1} x_r, z)) \cap F_{\mu})
= \int_{B_\rho ( ( R_{\gamma_1} x_r, z))} \chi_{F_{\mu}} (\overline{x},\overline{z}) \, d \mathcal{H}^{k} (\overline{x},\overline{z}) \\
& = \int_{\mathbb{R}^{k-2}} \int_{r - \rho}^{r + \rho}
\mathcal{H}^1 (F_{\mu} \cap B_\rho ( ( R_{\gamma_1} x_r, z)) \cap \partial B^2_\lambda (0, \overline{z} ))
\, d \lambda \, d \overline{z} \\
& \geq \int_{\mathbb{R}^{k-2} } \int_{r - \rho}^{r + \rho}
\mathcal{H}^1 (F_{\mu} \cap B_\rho ( ( R_{\gamma_2} x_r, z)) \cap \partial B^2_\lambda (0, \overline{z}))
\, d \lambda \, d \overline{z}   \\
&= \mathcal{H}^{k} (B_\rho ( ( R_{\gamma_2} x_r, z)) \cap F_{\mu}),
\end{align*}
where the inequality follows from \eqref{ferp} with $A = \mathbb{R}^{k}$. 
Thus, 
\[
\frac{\mathcal{H}^{k} (B_\rho  ( R_{\gamma_1} x_r, z)) \cap F_{\mu})}{\omega_{k} \rho^{k}}
\geq 
\frac{\mathcal{H}^{k} (B_\rho  ( R_{\gamma_2} x_r, z)) \cap F_{\mu})}{\omega_{k} \rho^{k}}.
\]
Passing to the liminf and the limsup as $\rho \to 0^+$, the conclusion follows.
\end{proof}

\begin{proposition} \label{magic property}
Let $\mu  : (0, \infty) \times \mathbb{R}^{k-2}\to [0, \infty)$ be a Lebesgue measurable function
satisfying \eqref{compatibility for mu} such that $F_\mu$ is a set of locally finite perimeter.
Suppose that $(x, z) \in \partial^* F_{\mu}$, and let $r \in (0, \infty)$ and $\beta \in (-\pi, \pi]$ be such that
\( x = r (\cos \beta, \sin \beta) \).
Then,
\begin{align}
\nu^{F_{\mu}} (R_{\gamma} x, z) = 
\left( R_{\gamma} \nu^{F_{\mu}}_x ( x, z), 
\nu^{F_{\mu}}_z ( x, z) \right), \label{formula symmetric normals in slices}
\end{align}
for every $\gamma \in \left[ \min\{ - \beta, 0 \}, \max\{ - \beta, 0 \} \right]$ such that
$(R_{\gamma} x, z) \in ( \partial^* F_{\mu} )_{(r, z)}$. 
\end{proposition}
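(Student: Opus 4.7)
My plan is to upgrade the density inequality \eqref{ferp} from Lemma~\ref{lem: decreasing densities} into a genuine local set-theoretic inclusion between $F_\mu$ and a rotated copy of itself, and then to read off the normal formula from the covariance of the reduced boundary under rigid motions. Set $p := (x,z)$ and $q := (R_\gamma x, z) = T_\gamma p$. The case $\gamma = 0$ is trivial, and by the reflection symmetry of $F_\mu$ across $\{x_2 = 0\}$ I may assume $\beta \in (0, \pi]$ and $\gamma \in [-\beta, 0)$; both $p$ and $q$ are distinct elements of $\partial^* F_\mu$.

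\textbf{Local inclusion $F_\mu \subset T_{-\gamma} F_\mu$ near $p$.} I would apply \eqref{ferp} with $\gamma_1 = \beta + \gamma \in [0,\beta)$, $\gamma_2 = \beta$, and $\rho > 0$ small enough for the disjointness condition \eqref{ssss}. Integrating the resulting $\mathcal{H}^1$-inequality in $\lambda$ and $\bar z$ gives, for every Borel set $A \subset \mathbb{R}^k$,
\[
\mathcal{H}^k\bigl(F_\mu \cap B_\rho(p) \cap T_\beta A\bigr) \;\leq\; \mathcal{H}^k\bigl(F_\mu \cap B_\rho(q) \cap T_{\beta+\gamma} A\bigr).
\]
Substituting $A = T_{-\beta} A'$ (so that $T_\beta A = A'$ and $T_{\beta+\gamma} A = T_\gamma A'$) and then applying the isometry $T_{-\gamma}$ to the right-hand side (which sends $B_\rho(q)$ to $B_\rho(p)$ and $T_\gamma A'$ to $A'$), I would obtain the key inequality
\[
\mathcal{H}^k\bigl(F_\mu \cap B_\rho(p) \cap A'\bigr) \;\leq\; \mathcal{H}^k\bigl(T_{-\gamma}F_\mu \cap B_\rho(p) \cap A'\bigr) \qquad \forall\, A' \subset \mathbb{R}^k \text{ Borel}.
\]
Choosing $A' = (T_{-\gamma} F_\mu)^c$ then forces $\mathcal{H}^k\bigl((F_\mu \setminus T_{-\gamma}F_\mu) \cap B_\rho(p)\bigr) = 0$.

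\textbf{Blow-up comparison and conclusion.} Since $p \in \partial^* F_\mu$ and $q \in \partial^* F_\mu$, the densities at $p$ of both $F_\mu$ and $T_{-\gamma}F_\mu$ equal $1/2$ (using $\theta(T_{-\gamma}F_\mu, p) = \theta(F_\mu, T_\gamma p) = \theta(F_\mu, q) = 1/2$). Combined with the inclusion of the previous step this yields $\rho^{-k}\mathcal{H}^k\bigl((T_{-\gamma}F_\mu \triangle F_\mu) \cap B_\rho(p)\bigr) \to 0$. Consequently the blow-up half-spaces $H^+_{0,\nu^{F_\mu}(p)}$ and $H^+_{0,\nu^{T_{-\gamma}F_\mu}(p)}$ coincide, i.e.\ $\nu^{F_\mu}(p) = \nu^{T_{-\gamma}F_\mu}(p)$. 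By covariance of the reduced boundary under the rigid motion $T_{-\gamma}$, the right-hand side equals $T_{-\gamma}\nu^{F_\mu}(q)$, and rearranging gives $\nu^{F_\mu}(q) = T_\gamma \nu^{F_\mu}(p) = (R_\gamma \nu^{F_\mu}_x(p), \nu^{F_\mu}_z(p))$, which is exactly \eqref{formula symmetric normals in slices}.

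\textbf{Main obstacle.} The heart of the argument is the passage from the density-type inequality \eqref{ferp} to the genuine measure-theoretic inclusion $F_\mu \cap B_\rho(p) \subset T_{-\gamma}F_\mu$; this is where one must use that \eqref{ferp} holds for \emph{every} test set $A$ and carefully track how the conjugation by $T_{-\beta}$ converts the two rotation conditions into a single clean statement about $F_\mu$ and its rotate near $p$. Once this inclusion is in hand, equality of the two blow-up half-spaces is forced by the matching density value $1/2$, and the identification of normals is then automatic from the rigid-motion covariance of $\nu^{(\cdot)}$.
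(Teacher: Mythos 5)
Your proof is correct, and it leans on the same key lemma as the paper — the $\mathcal{H}^1$-monotonicity estimate \eqref{ferp} from Step 1 of Lemma~\ref{lem: decreasing densities} — but organizes the conclusion differently. The paper plugs the single test set $A = H^+_{(x,z),\nu}$ into \eqref{ferp}, integrates, and obtains a two-sided squeeze
\begin{align*}
\tfrac12
\;\geq\;
\limsup_{\rho\to 0^+}\tfrac{\mathcal{H}^k\bigl(H^+_{q,\nu_\gamma}\cap F_\mu\cap B_\rho(q)\bigr)}{\omega_k\rho^k}
\;\geq\;
\liminf_{\rho\to 0^+}\tfrac{\mathcal{H}^k\bigl(H^+_{q,\nu_\gamma}\cap F_\mu\cap B_\rho(q)\bigr)}{\omega_k\rho^k}
\;\geq\;
\tfrac12,
\end{*align*}
from which the normal at $q=(R_\gamma x,z)$ is forced directly to be $\nu_\gamma$. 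You instead use \eqref{ferp} for \emph{all} test sets $A$, conjugate by the rigid motion $T_{-\gamma}$ to turn the two-centre statement into a one-centre one, and then specialise $A' = (T_{-\gamma}F_\mu)^c$ to extract the stronger local inclusion $F_\mu \cap B_\rho(p) \subset T_{-\gamma}F_\mu$ (modulo $\mathcal{H}^k$-null sets). Once that inclusion is in hand, the matching $1/2$-densities at $p$ of $F_\mu$ and $T_{-\gamma}F_\mu$ (the latter via the isometry and $q\in\partial^*F_\mu$) force $\theta(F_\mu\triangle T_{-\gamma}F_\mu, p)=0$, so the two blow-up half-spaces agree and the normal formula drops out of rigid-motion covariance. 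Both arguments rest on the same two pillars — the $\mathcal{H}^1$-monotonicity and the blow-up characterisation of $\partial^*$ — and both correctly use the hypothesis $q \in \partial^*F_\mu$; the difference is that your route isolates a clean geometric intermediate fact (local inclusion into a rotate of the set) which makes the final step transparent, whereas the paper's version is a slightly more compact computation built around one well-chosen test set. One small bookkeeping point in your write-up: when substituting $A = T_{-\beta}A'$ and then applying $T_{-\gamma}$ you do correctly cancel to get the single-centre inequality, but it is worth noting explicitly that $\rho$ must also be taken smaller than $r$ so the $\lambda$-integral ranges over $(r-\rho,r+\rho)\subset(0,\infty)$; this is implicit in the paper's set-up and you inherit it silently.

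(Correction to the misplaced brace above: the intended display is
\begin{align*}
\tfrac12
\;\geq\;
\limsup_{\rho\to 0^+}\frac{\mathcal{H}^k\bigl(H^+_{q,\nu_\gamma}\cap F_\mu\cap B_\rho(q)\bigr)}{\omega_k\rho^k}
\;\geq\;
\liminf_{\rho\to 0^+}\frac{\mathcal{H}^k\bigl(H^+_{q,\nu_\gamma}\cap F_\mu\cap B_\rho(q)\bigr)}{\omega_k\rho^k}
\;\geq\;
\tfrac12,
\end{align*}
which is the paper's squeeze.)
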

Roughly speaking, what the above result says is that, given $(x,z)\in \partial^*F_\mu$ as in the statement, if there exists any other point $(\bar{x},z) \in \partial^*F_\mu$ satisfying the following properties, namely $\bar{x}\in (\partial^*F_\mu)_{(|x|,z)}$, $\arccos (\hat{\bar{x}}\cdot e_1)\leq |\beta|$, and $x_2\,\bar{x}_2\geq 0$, then there exists an angle $\gamma\in [\min\{-\beta,0  \},\max\{-\beta,0 \}]$ such that $\bar{x}= R_\gamma x $ and the corresponding $\nu^{F_\mu}(\bar{x},z)$ can be written as
$$
\nu^{F_\mu}(\bar{x},z)=(R_\gamma\nu^{F_\mu}_x(x,z),\nu^{F_\mu}_z(x,z)).
$$

\begin{proof}[Proof of Proposition~\ref{magic property}]
In the following, we set $x_r = (r, 0)$.   
If $\beta = 0$ there is nothing to prove, so we can assume $\beta \neq 0$.
We will only consider the case $\beta > 0$, since for $\beta < 0$
the proof is analogous.
Also, since $R_0 x = x$ and the statement is true for $x$, 
we only need to consider the case $\gamma \neq 0$. Let $\gamma \in [ - \beta, 0 )$, and $\rho > 0$ be such that $r-\rho>0$, and 
\begin{align*}
\emptyset 
&= B_\rho ((x, z)) \cap B_\rho ( (R_{\gamma} x, z) ) 
= B_\rho ((R_{\beta} x_r, z)) \cap B_\rho ( (R_{\gamma + \beta} x_r, z) ).
\end{align*}
In the following, to ease the notation, let us set
\[
\nu = \nu^{F_{\mu}} ( x, z) \quad \text{ and } \quad
\nu_\gamma = ( R_{\gamma} \nu^{F_{\mu}}_x ( x, z), 
\nu^{F_{\mu}}_z ( x, z) ).
\]
We have
\begin{align*}
&\mathcal{H}^{k} 
\Big( H^{+}_{(R_{\gamma} x, z), \nu_{\gamma} } \cap F_{\mu} \cap B_\rho ((R_{\gamma} x, z)) \Big) 
=\mathcal{H}^{k} 
\Big( H^{+}_{(R_{\gamma + \beta} x_r, z), \nu_{\gamma} } \cap F_{\mu} \cap B_\rho ((R_{\gamma + \beta} x_r, z)) \Big) \\
& = \int_{\mathbb{R}^{k-2} } \int_{r - \rho}^{r + \rho}
\mathcal{H}^1 (F_{\mu} \cap B_\rho ( ( R_{\gamma + \beta}  x_r, z)) \cap H^{+}_{(R_{\gamma + \beta} x_r, z), \nu_{\gamma} } \cap \partial B^2_\lambda (0, \overline{z} ))
\, d \lambda \, d \overline{z} \\
&= \int_{\mathbb{R}^{k-2}} \int_{r - \rho}^{r + \rho}
\mathcal{H}^1 (F_{\mu} \cap B_\rho ( ( R_{\gamma + \beta}  x_r, z)) \cap 
T_\gamma ( H^{+}_{(R_\beta x_r, z), \nu}  ) \cap \partial B^2_\lambda (0, \overline{z} ))
\, d \lambda \, d \overline{z} \\
& \geq \int_{\mathbb{R}^{k-2}} \int_{r - \rho}^{r + \rho}
\mathcal{H}^1 (F_{\mu} \cap B_\rho ( ( R_{\beta}  x_r, z)) \cap H^{+}_{(R_{\beta} x_r, z), \nu } \cap \partial B^2_\lambda (0, \overline{z} ))
\, d \lambda \, d \overline{z} \\
&= \int_{\mathbb{R}^{k-2}} \int_{r - \rho}^{r + \rho}
\mathcal{H}^1 (F_{\mu} \cap B_\rho ( ( x,z)) \cap H^{+}_{(x, z), \nu} \cap \partial B^2_\lambda (0, \overline{z} ))
\, d \lambda \, d \overline{z} \\
&= \mathcal{H}^{k} 
\Big( H^{+}_{(x, z), \nu} \cap F_{\mu} \cap B_\rho ((x, z)) \Big), 
\end{align*}
where in the inequality we used \eqref{ferp} with $A = H^{+}_{(x, z), \nu}$, 
and the fact that $\gamma < 0$.
From the last chain of inequalities we obtain
\begin{align*}
& \frac{\mathcal{H}^{k} 
\Big( H^{+}_{(R_{\gamma} x, z), \nu_{\gamma} }  \cap B_\rho ((R_{\gamma} x, z)) \Big)}
{\omega_{k} \rho^{k}} 
\geq  \frac{\mathcal{H}^{k} 
\Big( H^{+}_{(R_{\gamma} x, z), \nu_{\gamma} } \cap F_{\mu} \cap B_\rho ((R_{\gamma} x, z)) \Big)}
{\omega_{k} \rho^{k}}
\\
&\geq  \frac{\mathcal{H}^{k} 
\Big( H^{+}_{(x, z), \nu} \cap F_{\mu} \cap B_\rho ((x, z)) \Big)}
{\omega_{k} \rho^{k}}.
\end{align*}
Passing to the limit as $\rho \to 0^+$, we have 
\begin{align*}
&\frac{1}{2} 
= \lim_{\rho \to 0^+} 
\frac{\mathcal{H}^{k} 
\Big( H^{+}_{(R_{\gamma} x, z), \nu_{\gamma} }  \cap B_\rho ((R_{\gamma} x, z)) \Big)}
{\omega_{k} \rho^{k}} 
\geq \limsup_{\rho \to 0^+}  \frac{\mathcal{H}^{k} 
\Big( H^{+}_{(R_{\gamma} x, z), \nu_{\gamma} } \cap F_{\mu} \cap B_\rho ((R_{\gamma} x, z)) \Big)}
{\omega_{k} \rho^{k}} \\
&\geq \liminf_{\rho \to 0^+}  \frac{\mathcal{H}^{k} 
\Big( H^{+}_{(R_{\gamma} x, z), \nu_{\gamma} } \cap F_{\mu} \cap B_\rho ((R_{\gamma} x, z)) \Big)}
{\omega_{k} \rho^{k}} 
\geq  \lim_{\rho \to 0^+} \frac{\mathcal{H}^{k} 
\Big( H^{+}_{(x, z), \nu} \cap F_{\mu} \cap B_\rho ((x, z)) \Big)}
{\omega_{k} \rho^{k}} = \frac{1}{2}, 
\end{align*}
where the last equality follows from the fact that $\nu$ is the inner unit normal to
$\partial^* F_{\mu}$ at $(x, z)$.
Therefore, 
\[
\frac{1}{2} = \lim_{\rho \to 0^+}  \frac{\mathcal{H}^{k} 
\Big( H^{+}_{(R_{\gamma} x, z), \nu_{\gamma} } \cap F_{\mu} \cap B_\rho ((R_{\gamma} x, z)) \Big)}
{\omega_{k} \rho^{k}}.
\]
Since by assumption $R_{\gamma} x \in \partial^* F_{\mu}$, 
it has to be 
\[
\nu^{F_{\mu}} (R_{\gamma} x, z) = 
\left( R_{\gamma} \nu^{F_{\mu}}_x ( x, z), 
\nu^{F_{\mu}}_z( x, z) \right), 
\]
and this allows us to conclude.
\end{proof}
\noindent
Now we state a useful remark. For a similar result in the context of Steiner symmetrisation
see \cite[Remark~2.5]{barchiesicagnettifusco}.

\begin{remark}\label{rem: nu ^F_mu is symmetric wrt to x2=0}
Let us notice that, by symmetry of the set $F_\mu$ w.r.t. the hyperplane $\{ x_2=0 \}\subset \mathbb{R}^k$, the following property holds true. Given any $\nu\in\mathbb{R}^{2}$ we denote with $\textnormal{Ref}(\nu)\in \mathbb{R}^2$ the reflection of $\nu$  with respect to $\{x_2=0\}\subset \mathbb{R}^2$, namely $\textnormal{Ref}(\nu)=(\nu_1,-\nu_2)$.  Then, for every $(x,z)\in \partial^*F_\mu$ we have that $(\textnormal{Ref}(x),z)\in \partial^*F_\mu$ and 
$$
\nu^{F_\mu}(\textnormal{Ref}(x),z)= \left(\textnormal{Ref}(\nu^{F_\mu}_x(x,z)),\nu^{F_\mu}_z(x,z)  \right).
$$ 
\end{remark}
\noindent
We are now ready to prove Proposition \ref{real magic property}.


\begin{proof}[Proof of Proposition \ref{real magic property}]
Let  $(r, z) \in (0, \infty) \times \R^{k-2} $
such that the slice $( \partial^* F_{\mu} )_{(r,z)} \neq \emptyset$. We divide the slice in two parts, namely
$$
( \partial^* F_{\mu} )_{(r,z)}=( \partial^* F_{\mu} )_{(r,z)}^+ \cup ( \partial^* F_{\mu} )_{(r,z)}^-
$$
where we set $( \partial^* F_{\mu} )_{(r,z)}^{+}=( \partial^* F_{\mu} )_{(r,z)}\cap \{ x_2 \geq 0 \}$, and  $( \partial^* F_{\mu} )_{(r,z)}^{-}=( \partial^* F_{\mu} )_{(r,z)}\cap \{ x_2 < 0 \}$. We now divide the proof in steps, depending on how many points are contained in the slice.\\
\textbf{Step 1a.} Let us suppose that $\mathcal{H}^{0}(( \partial^* F_{\mu} )_{(r,z)}^{+})=1$. Let $x\in \mathbb{R}^2_0$ such that $\{x\}=( \partial^* F_{\mu} )_{(r,z)}^{+}$, and suppose in addition that $x_2=0$. Then, by symmetry properties of $F_\mu$, the point $x$ is the only point in the entire slice $( \partial^* F_{\mu} )_{(r,z)}$, and so we conclude.\\
\textbf{Step 1b.}  Let us suppose that $\mathcal{H}^{0}(( \partial^* F_{\mu} )_{(r,z)}^{+})=1$. Let $x\in \mathbb{R}^2_0$ such that $\{x\}=( \partial^* F_{\mu} )_{(r,z)}^{+}$, and suppose in addition that $x_2>0$. Then, by symmetry properties of $F_\mu$, the points $x$, and $\textnormal{Ref}(x)$ namely the reflection of  $x$ w.r.t $\{ x_2=0 \}$ (see Remark \ref{rem: nu ^F_mu is symmetric wrt to x2=0}), are the only points in the entire slice $( \partial^* F_{\mu} )_{(r,z)}$. Applying Remark \ref{rem: nu ^F_mu is symmetric wrt to x2=0} we get that the two vectors $\nu^{F_\mu}(x,z)$, and $\nu^{F_\mu}(\textnormal{Ref}(x),z)$ are symmetric to each other w.r.t. $\{ x_2=0 \}$, and so by a direct computation we show that the three functions in \eqref{eq: the components of nu^Fm} are constant in the slice $( \partial^* F_{\mu} )_{(r,z)}$. This concludes the second part of the first step.\\
\textbf{Step 2.} Let us suppose that $\mathcal{H}^{0}(( \partial^* F_{\mu} )_{(r,z)}^{+})>1$. Let $x\in ( \partial^* F_{\mu} )_{(r,z)}^{+}$ and let $\beta\in (0,\pi]$ be such that $x=r(\cos\beta,\sin\beta)$. Thanks to Proposition \ref{magic property} we get that
\begin{align*}
\nu^{F_{\mu}} (R_{\gamma} x, z) = 
\left( R_{\gamma} \nu^{F_{\mu}}_x ( x, z), 
\nu^{F_{\mu}}_z ( x, z) \right), 
\end{align*}
for every $\gamma \in \left[ -\beta,0 \right]$ such that
$(R_{\gamma} x, z) \in ( \partial^* F_{\mu} )_{(r, z)}^+$. 
As a consequence of the fact that the above relation holds true for every $x\in( \partial^* F_{\mu} )_{(r, z)}^+$, we get that the three functions in \eqref{eq: the components of nu^Fm} are constant in $( \partial^* F_{\mu} )_{(r, z)}^+$. By symmetry of $F_\mu$ w.r.t. $\{ x_2=0 \}$, the same conclusion holds true when restricting the three functions in \eqref{eq: the components of nu^Fm} to $( \partial^* F_{\mu} )_{(r, z)}^-$. Finally, the fact that the constant values of those three functions does not change when passing from $( \partial^* F_{\mu} )_{(r, z)}^+$ to $( \partial^* F_{\mu} )_{(r, z)}^-$ is a consequence of Remark \ref{rem: nu ^F_mu is symmetric wrt to x2=0}. This concludes the proof of the second step. Putting together the informations obtained in all these steps, we conclude. 
\end{proof}
\noindent
We now focus our attention on the properties of the function $\mu$. Parts of the following results were already stated without an explicit proof in \cite[Section 6]{CagnettiPeruginiStoger}. For completeness, and future references we decide to provide here a detailed proof. In the following, we denote by $C^0_c (\Omega ; \R^{k-1})$ the class of all continuous functions from $\Omega$ to $\R^{k-1}$, while  with $C^0_b (\Omega; \R^{k-1} )$ we denote the set of continuous and bounded function from $\Omega$ to $\R^{k-1}$.

\medskip
\noindent
In the following, given $\mu  : (0, \infty) \times \R^{k-2} \to [0, \infty)$ a Lebesgue measurable function
satisfying \eqref{compatibility for mu}, we denote by $\xi  : (0, \infty) \times \R^{k-2} \to [0, \infty)$ the function defined as 
\begin{align}\label{def:xi}
\xi(r,z):=\mu(r,z)/r\quad \textnormal{for }\mathcal{L}^{k-1}\textnormal{-a.e. }(r,z)\in (0, \infty) \times \R^{k-2}.
\end{align}

\begin{lemma}\label{lem: mu e xi in BV loc}
Let $\mu  : (0, \infty) \times \R^{k-2} \to [0, \infty)$ be a Lebesgue measurable function
satisfying \eqref{compatibility for mu}, let $U\subset (0, \infty) \times \R^{k-2}$ be an open set, and let $E\subset \mathbb{R}^k$ be a $\mu$-distributed set such that $E$ has finite perimeter in $\Phi(U\times\mathbb{S}^1)$.  Then, both the function $\mu$, and the function  $\xi$ defined in \eqref{def:xi} are in $BV_{\textnormal{loc}} (U)$. In addition, $|D_z \mu|$, and $|rD_r \xi|$ are finite Radon measures 
on $U$, and  
for every Borel set $B \subset U$ we have
\begin{align}
&\int_{B} \varphi (r, z) \, d D_{z_i} \mu (r, z)
= \int_{\partial^*E \cap  \Phi (B \times \mathbb{S}^{1}) }  
\varphi (|x| , z) \, \nu_{z_i}^{E} (x, z) \, d \mathcal{H}^{k-1} (x,z), \label{Dz mu for E} \\
&\int_{B} \varphi (r, z) r d D_r \xi (r,z) 
\hspace{-.05cm}= 
\hspace{-.1cm} \int_{\partial^* E \cap  \Phi (B \times \mathbb{S}^{1})}  
\varphi (|x| ,z) \, \hat{x} \hspace{-.05cm} \cdot \hspace{-.05cm} \nu^{E}_x (x, z) \, d \mathcal{H}^{k-1} (x,z), \label{rDr xi for E}
\end{align}
for every $i\in\{1,\dots,k-2 \}$, and for every bounded Borel function $\varphi: B \to \mathbb{R}$. Moreover, let $\sigma_{\mu}$ be the $\mathbb{R}^k$-valued Radon measure on $U$ defined as
\begin{align}\label{ def of sigma}
\sigma_{\mu}(B):=\int_{B}d(rD_r \xi,\,2\mathcal{L}^{k-1}\mres(\{ \mu>0 \}\cap U),\,D_z \mu )(r, z),\quad \forall\,B\subset U \textnormal{ Borel}.
\end{align}
Then, for every Borel set $B \subset U$ we get 
\begin{align}\label{eq: the real calc lemmata weaker}
\int_B\varphi(r,z)\cdot d\sigma_{\mu}(r,z)\leq\int_{\partial^*E\cap\Phi(B\times\mathbb{S}^1)}\varphi(|x|,z)\cdot\nu^E_{\mathsf{c}}(x,z)\,d\mathcal{H}^{k-1}(x,z),
\end{align}
for every bounded Borel function $\varphi: B \to \mathbb{R}^k$ with non-negative second component, where $\nu^E_{\mathsf{c}}$ was defined in \eqref{eq: the circular version of nu^E}. In particular, equality sign holds true in \eqref{eq: the real calc lemmata weaker} if and only if $(E)_{(r,z)}$ is $\mathcal{H}^1$-equivalent to a connected arc for $\mathcal{L}^{k-1}$-a.e. $(r,z)\in B$.
\end{lemma}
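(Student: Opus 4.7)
The plan is to derive simultaneously the $BV_{\mathrm{loc}}$ character of $\mu$ and $\xi$ together with the two distributional identities \eqref{Dz mu for E} and \eqref{rDr xi for E}, by applying the divergence theorem for $\chi_E$ against two families of test vector fields adapted to the cylindrical splitting $\mathbb{R}^2\times\mathbb{R}^{k-2}$. Once these identities are in hand, the inequality for $\sigma_\mu$ follows by combining them with the coarea Proposition \ref{coarea for sets} and Vol'pert's Proposition \ref{volpert theorem sets}, together with the elementary observation that a proper non-trivial measurable subset of the topological circle $\partial B(r)$ has reduced boundary of even cardinality at least $2$, with equality if and only if the subset is $\mathcal{H}^1$-equivalent to a connected arc.

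First I would fix $\varphi\in C^1_c(U)$ and introduce
\begin{equation*}
T_i(x,z) := \varphi(|x|,z)\,e_{z_i}, \quad i=1,\dots,k-2, \qquad S(x,z) := \varphi(|x|,z)\,\hat x,
\end{equation*}
both of which belong to $C^1_c(\Phi(U\times\mathbb{S}^1);\mathbb{R}^k)$ since $\{x=0\}$ is excluded from the domain by the very definition of $\Phi$. In polar coordinates on $\mathbb{R}^2_0$ one has $\diver T_i = \partial_{z_i}\varphi(|x|,z)$ and $\diver S = \partial_r\varphi(|x|,z) + \varphi(|x|,z)/|x|$. Fubini combined with $\mu(r,z)=\mathcal{H}^1(E_{(r,z)})$ gives
\begin{equation*}
\int_{\R^k}\chi_E\,\diver T_i\,dx\,dz = \int_U \mu\,\partial_{z_i}\varphi\,dr\,dz, \qquad \int_{\R^k}\chi_E\,\diver S\,dx\,dz = \int_U \bigl(\mu\,\partial_r\varphi+\xi\,\varphi\bigr)dr\,dz,
\end{equation*}
and using $\mu=r\xi$ the second right-hand side rearranges, by distributional integration by parts in $r$ (the total derivative $\partial_r(r\xi\varphi)$ dropping out by compact support), to $-\int_U r\varphi\,dD_r\xi$. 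On the other hand the divergence theorem for sets of finite perimeter equates those same left-hand sides with $-\int_{\partial^*E}\varphi(|x|,z)\,\nu^E_{z_i}\,d\mathcal{H}^{k-1}$ and $-\int_{\partial^*E}\varphi(|x|,z)\,\hat x\cdot\nu^E_x\,d\mathcal{H}^{k-1}$ respectively. Matching the two expressions yields \eqref{Dz mu for E} and \eqref{rDr xi for E} for smooth $\varphi$, and the uniform bound $\|\varphi\|_\infty P(E;\Phi(\supp\varphi\times\mathbb{S}^1))$ on each side proves $\mu,\xi\in BV_{\mathrm{loc}}(U)$ with $|D_z\mu|$ and $|rD_r\xi|$ finite Radon measures on $U$. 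A standard monotone-class argument then extends both identities to bounded Borel $\varphi$ on arbitrary Borel $B\subset U$.

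For \eqref{eq: the real calc lemmata weaker}, write $\varphi=(\varphi_1,\varphi_2,\varphi_3)$ with $\varphi_2\ge 0$. The first and third components produce exact equalities via \eqref{rDr xi for E} and \eqref{Dz mu for E}, since $\hat x\cdot\nu^E_x$ and $\nu^E_z$ are precisely the first and third components of $\nu^E_{\mathsf c}$. For the middle component, Proposition \ref{coarea for sets} applied to $g(x,z)=\varphi_2(|x|,z)\ge 0$ gives
\begin{equation*}
\int_{\partial^*E\cap\Phi(B\times\mathbb{S}^1)}\varphi_2(|x|,z)\,|\nu^{E}_{\!  x {\scriptscriptstyle\parallel}}(x,z)|\,d\mathcal{H}^{k-1} = \int_B \varphi_2(r,z)\,\mathcal{H}^0\bigl((\partial^*E)_{(r,z)}\bigr)\,dr\,dz.
\end{equation*}
By Proposition \ref{volpert theorem sets}, $(\partial^*E)_{(r,z)}=\partial^*(E_{(r,z)})$ for $\mathcal{L}^{k-1}$-a.e.\ $(r,z)\in\{\mu>0\}\cap U$; since $E_{(r,z)}$ is then a non-empty proper measurable subset of $\partial B(r)$, its reduced boundary has even cardinality at least $2$, with equality if and only if $E_{(r,z)}$ is $\mathcal{H}^1$-equivalent to a connected arc. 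Hence $\mathcal{H}^0((\partial^*E)_{(r,z)})\ge 2\chi_{\{\mu>0\}}(r,z)$ a.e., and multiplying by $\varphi_2\ge 0$ and integrating reproduces the middle coordinate of $\sigma_\mu$ bounded by the middle term of the surface integral. Summing the three contributions yields \eqref{eq: the real calc lemmata weaker}, and the equality characterisation reduces to $\mathcal{H}^0((\partial^*E)_{(r,z)})=2$ for $\mathcal{L}^{k-1}$-a.e.\ $(r,z)\in B$, i.e.\ the arc condition.

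The main technical point I expect is the integration by parts producing $rD_r\xi$ rather than $D_r\mu$, which is forced by the factor $1/|x|$ appearing in $\diver\hat x$ and is the very reason why the auxiliary function $\xi=\mu/r$ is introduced; the edge case where $\mu$ saturates the bound $2\pi r$ is handled by the fact that it contributes trivially to both sides of the coarea identity.
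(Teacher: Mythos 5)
Your proposal follows essentially the same strategy as the paper: cylindrical test vector fields $\varphi(|x|,z)\,e_{z_i}$ and $\varphi(|x|,z)\,\hat x$ fed into the divergence theorem for $\chi_E$, with the radial term $\varphi/|x|$ forcing the appearance of $\xi=\mu/r$; then coarea (Proposition~\ref{coarea for sets}) plus Vol'pert (Proposition~\ref{volpert theorem sets}) to convert the middle component of $\sigma_\mu$ into a slice count and extract the inequality. The structure of the argument, the choice of test fields, the role of $\xi$, and the $\mathcal{H}^0\ge 2$ observation are all the same as in the paper.

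Two remarks on details you compress. First, your passage \emph{``using $\mu=r\xi$ the second right-hand side rearranges, by distributional integration by parts in $r$ \dots to $-\int_U r\varphi\,dD_r\xi$''} is mildly circular as stated: you must first know that $\xi\in BV_{\mathrm{loc}}(U)$ before $D_r\xi$ is defined, and the identity $D_r\mu = rD_r\xi + \xi\,dr\,dz$ is then a chain-rule fact. The paper handles this in the intended order: it first proves $\mu\in BV_{\mathrm{loc}}(U)$ from the two test-field estimates, then deduces $\xi=\mu/r\in BV_{\mathrm{loc}}(U)$ from \cite[Example~3.97]{AFP}, and only then writes $D_r\mu = rD_r\xi + \xi\,dr\,dz$ to compare \eqref{eq: intermediate calculation lemmas dr 1} with the claim. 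Your rearrangement should be presented in that order to be airtight. Second, your extension \emph{``a standard monotone-class argument''} corresponds to the paper's explicit two-step approximation (Lusin's theorem for $B\subset\subset U$ in Step~2b, then exhaustion by compacts for arbitrary Borel $B\subset U$ in Step~2c); this is routine but worth noting that the paper spells it out because $D_{z_i}\mu$ is only a Radon measure, not absolutely continuous.

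Finally, a word on your closing caveat. You write that the case $\mu(r,z)=2\pi r$ \emph{``contributes trivially to both sides of the coarea identity.''} That resolution is not quite right: on a set where $\mu=2\pi r$ the slice $E_{(r,z)}$ is a.e.\ the full circle, so $\mathcal{H}^0((\partial^*E)_{(r,z)})=0$, yet the middle coordinate $2\mathcal{L}^{k-1}\mres(\{\mu>0\}\cap U)$ of $\sigma_\mu$ still contributes the value $2$ at such points; the contribution is therefore trivial only on the boundary-integral side. The paper's Step~3 implicitly relies on $\mathcal{L}^{k-1}\big((\{\mu>0\}\cap U)\setminus\mathrm{Pr}(\partial^*E)\big)=0$, which likewise presumes one is working away from the saturated slices. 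This is a shared assumption rather than a flaw peculiar to your argument, but you should not state the edge case as being harmlessly absorbed when in fact it requires either excluding $\{\mu=2\pi r\}$ from the Lebesgue term of $\sigma_\mu$ or an argument that $\{\mu=2\pi r\}$ has $\mathcal{L}^{k-1}$-measure zero.
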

\begin{proof}[Proof of Lemma \ref{lem: mu e xi in BV loc}]
We divide the proof in several steps.\\
\textbf{Step 1.} Let us prove that $\mu\in BV_{\textnormal{loc}}(U)$. Let us start by proving that $\mu\in L^1_{\textnormal{loc}}(U)$. Let $V\subset\subset U$, then
\begin{align*}
\|\mu\|_{L^1(V)}=\int_{V}\mu(r,z)\,dr\, dz=\int_{V}dr\, dz\int_{E_{(r,z)}}1\,d\mathcal{H}^1(x)=\int_{E\cap \Phi(V\times\mathbb{S}^1)}1\,d\mathcal{H}^{k}<\infty.
\end{align*}
This proves that $\mu\in L^1_{\textnormal{loc}}(U)$. Similarly, we get that $\xi \in L^1_{\textnormal{loc}}(U)$. In order to conclude this first step we need to show that for every $V\subset\subset U$ open set, we have
\begin{align}\label{eq: total variation of mu}
\sup\left\{\int_{V} \mu(r,z)\diver\,T(r,z)\,dr\, dz:\,T\in C^1_c(V;\mathbb{R}^{k-1}),\,|T|\leq 1 \right\}<\infty.
\end{align}
Let $i\in\{1,\dots,k-2 \}$, and let $\varphi\in C^1_c(V)$ with $|\varphi|\leq 1$. Then, 
\begin{align*}
&\int_{V}\mu(r,z)\frac{\partial\varphi}{\partial z_i}(r,z)\,dr\,dz=\int_V dr\,dz\int_{E_{(r,z)}}\frac{\partial\varphi}{\partial z_i}(|x|,z)\,d\mathcal{H}^{1}(x)\\
&=\int_{\Phi(V\times\mathbb{S}^1)}\chi_E(x,z)\frac{\partial\varphi}{\partial z_i}(|x|,z)\,d\mathcal{H}^{k}(x,z)=-\int_{\partial^*E\cap \Phi(V\times\mathbb{S}^1)}\varphi(|x|,z)\nu^E_{z_i}(x,z)\,d\mathcal{H}^{k-1}(x,z)\\
&\leq P(E;\Phi(V\times\mathbb{S}^1))<\infty.
\end{align*}

Let us now recall that, for any $\varphi\in C^1(V)$
\begin{align*}
\text{div}_{(x,z)} \big( \varphi (|x|, z) \hat{x} \big)
= \frac{\partial \varphi}{\partial r} (| x | , z) + \frac{1}{|x|} \varphi (|x|, z),
\end{align*}
where by $\text{div}_{(x, z)}$ we denoted the divergence in $\mathbb{R}^{k}$ with respect to the variables $(x, z)$; we do that to distinguish when we consider the divergence in $(0,\infty)\times\mathbb{R}^{k-2}$ w.r.t. the variables $r$, and $z$. Then, for any $\varphi\in C^1_c(V)$ with $|\varphi|\leq 1$ we get
\begin{align*}
&\int_{V}\mu(r,z)\frac{\partial\varphi}{\partial r}(r,z)\,dr\,dz =  \int_{\Phi (V \times \mathbb{S}^{1})} \chi_{E} (x, z) 
\frac{\partial \varphi}{\partial r} (|x| , z)\, dx\, dz   \\
&=  \int_{\Phi (V \times \mathbb{S}^{1})} \chi_{E} (x, z)  
\left(  \text{div}_{(x, z)} \Big( \varphi (|x|, z) \hat{x} \Big)
-  \frac{1}{|x|} \varphi (|x|, z) \right) \, dx \, dz \\
&= - \int_{\partial^*E\cap\Phi (V \times \mathbb{S}^{1})} 
\varphi (|x| , z) \, \hat{x} \cdot \nu^{E}(x, z)\, d\mathcal{H}^{k-1}(x,z)
- \int_{\Phi (V \times \mathbb{S}^{1})} \chi_{E} (x, z) 
\frac{1}{|x|} \varphi (|x|, z)  \, dx \, dz  \\
&= -\int_{\partial^* E \cap \Phi (V \times \mathbb{S}^{1}) }  
\varphi (|x| , y, t) \, \hat{x} \cdot \, \nu^{E}_x (x, z) \, d \mathcal{H}^{k-1} (x, z) 
-\int_{V} \xi (r, y, t) \varphi (r, y, t) \, dr \, dz,\\
&\leq P(E;\Phi(V\times\mathbb{S}^1)) + \|\xi \|_{L^1(V)}<\infty,
\end{align*}
where for the last inequality we used that $\xi\in L^1_{\textnormal{loc}}(U)$, and $V\subset\subset U$. Putting together the above calculations we get that \eqref{eq: total variation of mu} holds true, and this proves that $\mu\in BV_{\textnormal{loc}}(U)$. Since the maps $(r, z) \mapsto 1/r$ and $(r, z) \mapsto \mu (r, z)$ belong to $BV (V)$, thanks to \cite[Example~3.97]{AFP} we have that 
$\xi (r, z) = \mu (r, z)/r \in BV (V)$ for every $V\subset\subset U$ open set, and so $\xi (r, z)\in BV_{\textnormal{loc}} (U)$. In particular, 
\begin{align}\label{eq: chian rule fo mu e xi}
D_r \mu = D_r (r \xi) = r D_r \xi + \xi \, dr \, dz.
\end{align}
This concludes the first step.\\ 

\noindent
\textbf{Step 2a.} Let us prove that relations \eqref{Dz mu for E}, and \eqref{rDr xi for E} holds true for every $\varphi\in C^1_c (U)$. Let $\varphi\in C^1_c(U)$ be a test function, and let $V\subset\subset U$ be an open set such that $\textnormal{supp}(\varphi)\subset V$. Then, by properties of $BV_{\textnormal{loc}}$ functions, together with the calculation we made in the first step, we have
\begin{align*}
&-\int_{U}\varphi(r,z)\,dD_{z_i}\mu(r,z)=\int_{U}\mu(r,z)\frac{\partial\varphi}{\partial_{z_i}}(r,z)\,dr\,dz\\
&=\int_{V}\mu(r,z)\frac{\partial\varphi}{\partial_{z_i}}(r,z)\,dr\,dz=-\int_{\partial^*E\cap \Phi(V\times\mathbb{S}^1)}\varphi(|x|,z)\nu^E_{z_i}(x,z)\,d\mathcal{H}^{k-1}(x,z)\\
=&-\int_{\partial^*E\cap \Phi(U\times\mathbb{S}^1)} \varphi(|x|,z)\nu^E_{z_i}(x,z)\,d\mathcal{H}^{k-1}(x,z).
\end{align*}
Thus, for all $i\in\{1,\dots,\k-2 \}$, and for all $\varphi\in C^1_c(U)$ we have,
\begin{align}\label{eq: intermediate calculation lemmas dz}
\int_{U}\varphi(r,z)\,dD_{z_i}\mu(r,z)=\int_{\partial^*E}\varphi(|x|,z)\nu^E_{z_i}(x,z)\,d\mathcal{H}^{k-1}(x,z),
\end{align}
which proves that \eqref{Dz mu for E} holds true for every $\varphi\in C^1_c(U)$. Let us now prove that \eqref{rDr xi for E} holds true for every $\varphi\in C^1_c (U)$. Let $\varphi\in C^1_c(U)$ be a test function, and let $V\subset\subset U$ be an open set such that $\textnormal{supp}(\varphi)\subset V$. Then, analogously to what we proved above, by properties of $BV_{\textnormal{loc}}$ functions, together with the calculation we made in the first step, we have
\begin{align*}
&-\int_{U} \varphi (r, z) \, d D_r \mu (r,z) 
= \int_{U} \mu(r,z) \frac{\partial \varphi}{\partial r}(r,z)\, dr\,dz =\int_V \mu(r,z) \frac{\partial \varphi}{\partial r}(r,z)\, dr\,dz\\
&=-\int_{\partial^* E \cap \Phi (V \times \mathbb{S}^{1}) }  
\varphi (|x| , y, t) \, \hat{x} \cdot \, \nu^{E}_x (x, z) \, d \mathcal{H}^{k-1} (x, z) 
-\int_{V} \xi (r, y, t) \varphi (r, y, t) \, dr \, dz,\\
&=-\int_{\partial^* E\cap \Phi(U\times\mathbb{S}^1)}  
\varphi (|x| , y, t) \, \hat{x} \cdot \, \nu^{E}_x (x, z) \, d \mathcal{H}^{k-1} (x, z) 
-\int_{U} \xi (r, y, t) \varphi (r, y, t) \, dr \, dz,
\end{align*}
from which we get
\begin{align}\label{eq: intermediate calculation lemmas dr 1}
\int_{U} \varphi (r, z) \, d D_r \mu (r,z) 
&= \int_{\partial^* E\cap \Phi(U\times\mathbb{S}^1)}  
\varphi (|x| , z) \, \hat{x} \cdot \, \nu^{E}_x (x,z) \, d \mathcal{H}^{k-1} (x, z)  \\
&\hspace{.2cm}+
 \int_{U}   \xi (r, z) \varphi (r, z) \, dr \, dz \nonumber.
\end{align}
Comparing \eqref{eq: intermediate calculation lemmas dr 1} with \eqref{eq: chian rule fo mu e xi}, we get that \eqref{rDr xi for E} holds true for every $\varphi\in C^1_c(U)$. Before concluding this first step, let us observe as a consequence of the previous calculations, we have that
\begin{align}
&|D_{z_i}\mu|(U)\leq P(E; \Phi(U\times\mathbb{S}^1)),\qquad \textnormal{for }i=1,\dots,k-2,   \label{eq: |Dzi mu|< infty}\\ \medskip
&|rD_r\xi|(U)\leq P(E; \Phi(U\times\mathbb{S}^1)).\label{eq: |rDr xi|< infty}
\end{align}
This proves that $|D_z\mu|$ and $|rD_r\xi|$ are finite Radon measures on $U$, and we conclude the first step.\\
\noindent
\textbf{Step 2b.} We are now ready to prove \eqref{Dz mu for E}, and \eqref{rDr xi for E} whenever $B\subset \subset U$. We will only show \eqref{Dz mu for E}, since the proof of \eqref{rDr xi for E} is similar. Let $i\in \{1,\dots,k-2\}$, let $B\subset\subset U$ be a Borel set, let $\varphi:B\to \mathbb{R}$ be a bounded Borel function, and let $V\subset\subset U$ open set such that $B\subset V$. We call $\bar{\varphi}:V\to\mathbb{R}$ the Borel function that coincides with $\varphi$ in $B$, and it is zero in $V\setminus B$. Since every function in $C^0_b (V)$ can be approximated uniformly on compact subsets of $V$ by functions in $C^1_c (V)$, and since $D_{z_i} \mu$ is a bounded Radon measure on $V$, we have that
\eqref{Dz mu for E} holds true for every function in $C^0_b (V)$. Let $\lambda$ be the 
bounded Radon measure on $V$ defined by
\begin{align}\label{def: auxiliary measure lambda}
\lambda (B) := | D_{z_i} \mu| (B) + \mathcal{H}^{k-1}  \left( \partial^* E \cap \left( \Phi (B \times \mathbb{S}^{1}) \right) \right) 
\end{align}
for every Borel set $B \subset V$. By Lusin Theorem, for every $h \in \mathbb{N}$ there exists $\varphi_h \in C^0_b (V)$ such that $\| \varphi_h \|_{L^{\infty} (V)} \leq \| \bar{\varphi} \|_{L^{\infty} (V)}$ and
\begin{align*}
\lambda \left( \{ (r, z) \in V : \bar{\varphi} (r,z) \neq \varphi_h (r, z) \}  \right) < \frac{1}{h}.
\end{align*}
For each $h \in \mathbb{N}$ we can apply 
\eqref{Dz mu for E} to $\varphi_h$, obtaining 
\begin{equation*} 
\int_{V} \varphi_h (r, z) \, d D_{z_i}\mu (r, z)
= \int_{\partial^*E \cap \left( \Phi (V \times \mathbb{S}^{1}) \right)}  
\varphi_h (|x| , z) \, \nu_{z_i}^{E} (x, z) \, d \mathcal{H}^{k-1} (x, z).
\end{equation*}
Using this identity, we have
\begin{align*}
&\left| \int_{B} \varphi (r, z) \, d D_{z_i} \mu (r,z) - \int_{\partial^* E \cap  \Phi (B \times \mathbb{S}^{1}) }  
\varphi (|x| , z) \, \nu_{z_i}^{E} (x, z) \, d \mathcal{H}^n (x, z) \right| \\
&=\left| \int_{V} \bar{\varphi} (r, z) \, d D_{z_i} \mu (r,z) - \int_{\partial^* E \cap  \Phi (V \times \mathbb{S}^{1}) }  
\bar{\varphi} (|x| , z) \, \nu_{z_i}^{E} (x, z) \, d \mathcal{H}^n (x, z) \right|\\ 
&\leq \left| \int_{V} \big( \bar{\varphi} (r,z) - \varphi_h (r, z) \big) \, d D_{z_i} \mu (r, z) \right| \\
&+ \left| \int_{V} \varphi_h (r, z) \, d D_{z_i} \mu (r, z) - \int_{\partial^* E \cap  \Phi (V \times \mathbb{S}^{1})} \varphi_h (|x| ,z) \, \nu_{z_i}^{E} (x,z) \, d\mathcal{H}^{k-1} (x, z) \right| \\
&+ \left| \int_{\partial^* E \cap  \Phi (V \times \mathbb{S}^{1})}  \left( \bar{\varphi} (|x| ,z) - \varphi_h (r,z) \right) \, \nu_{z_i}^{E} (x,z) \, d \mathcal{H}^{k-1} (x, z) \right| \\
&= \left| \int_{V} \big( \bar{\varphi} (r, z) - \varphi_h (r, z) \big) \, d D_{z_i} \mu (r,z) \right| \\
&+ \left|  \int_{\partial^* E \cap \Phi (V \times \mathbb{S}^{1})}  
\big( \bar{\varphi} (r, z) - \varphi_h (r, z) \big) \, \nu_{z_i}^{E} (x, z) \, d \mathcal{H}^{k-1} (x, z) \right| \\
&\leq \int_{V} \left| \bar{\varphi} (r,z) - \varphi_h (r, z) \right| \, d \left| D_{z_i} \mu \right| (r,z) \\
&+ \int_{\partial^* E \cap  \Phi (V \times \mathbb{S}^{1})}  \left| \bar{\varphi} (r,z) - \varphi_h (r, z) \right| \, d \mathcal{H}^{k-1} (x,z) 
\leq \frac{4}{h} \| \bar{\varphi} \|_{L^{\infty} (V)}.
\end{align*}
Passing to the limit as $h \to \infty$ we obtain \eqref{Dz mu for E} whenever $B\subset \subset U$. This concludes step 2b.\\
\textbf{Step 2c.} We finally prove \eqref{Dz mu for E}, and \eqref{rDr xi for E}. As done in step 2b, we will only show \eqref{Dz mu for E}. Fix $i\in \{1,\dots,k-2 \}$ and consider the Radon measure $\lambda$ on $U$ defined as in \eqref{def: auxiliary measure lambda}. Let $B\subset U$ be a Borel set, and let $(B_h)_{h\in\mathbb{N}}\subset B$ be a sequence of compact sets, with the property that $\lambda(B\setminus B_h)<\epsilon_h$, where $(\epsilon_h)_{h\in\mathbb{N}}\subset [0,1]$ and $\lim_{h\to \infty}\epsilon_h=0$. Let $\varphi:B\to \mathbb{R}$ be a bounded Borel function, and let us set $\varphi_h(r,z)= \chi_{B_h}(r,z)\varphi(r,z)$ for every $(r,z)\in B$, for every $h\in \mathbb{N}$. By construction, up to pass to a subsequence, we have that $\lim_{h\to \infty}\varphi_h(r,z)=\varphi(r,z)$ for $\lambda$-a.e. $(r,z)\in B$. Thus,
\begin{align*}
&\left|\int_{B}\varphi(r,z)\,dD_{z_i}\mu(r,z)-\int_{\partial^*E\cap \Phi(B\times\mathbb{S}^1)}\varphi(|x|,z)\nu^E_{z_i}(x,z)\,d\mathcal{H}^{k-1}(x,z)  \right|\\
&=\left|\int_{B}(\varphi(r,z)-\varphi_h(r,z))\,dD_{z_i}\mu(r,z)-\int_{\partial^*E\cap \Phi(B\times\mathbb{S}^1)}(\varphi(|x|,z)-\varphi_h(|x|,z))\nu^E_{z_i}(x,z)\,d\mathcal{H}^{k-1}(x,z)  \right|\\
&\leq \sup_{(r,z)\in B}\varphi(r,z)\,\lambda (B\setminus B_h) \leq \epsilon_h\sup_{(r,z)\in B}\varphi(r,z). 
\end{align*}
Passing to the limit in the above relation as $h\to \infty$ we prove \eqref{Dz mu for E}. Formula \eqref{rDr xi for E} can be obtained in similar way using the approximation argument we just presented.\\
\noindent
\textbf{Step 3.} Let us prove \eqref{eq: the real calc lemmata weaker}. Let $B\subset U$ be a Borel set, and let $g: B\to [0,\infty]$ be a Borel function. Let us denote with $\mathrm{Pr}(\partial^*E)$ the projection in $U$ of the set $\partial^*E\cap \Phi(U\times\mathbb{S}^1)$, namely
$$
\mathrm{Pr}(\partial^*E):=\left\{(r,z)\in U:\, (\partial^*E)_{(r,z)}\neq \emptyset  \right\}.
$$

By construction, it can be shown that $\mathcal{L}^{k-1}(\mathrm{Pr}(\partial^*E)\setminus (\{\mu>0 \}\cap U))=0$, while by Proposition \ref{volpert theorem sets} we have that $\mathcal{L}^{k-1}((\{ \mu>0 \}\cap U)\setminus \mathrm{Pr}(\partial^*E))=0$.
Thus, by the Coarea formula \eqref{coarea for sets} we get
\begin{align*}
&\int_{B\cap \{ \mu>0 \}\cap U} 2g (r, z) \, dr\,dz=\int_{B\cap \mathrm{Pr}(\partial^*E)} 2g (r, z) \, dr\,dz\leq \int_{B\cap \mathrm{Pr}(\partial^*E)} g(r, z)\int_{(\partial^*E)_{(r,z)}}1\,d\mathcal{H}^{0}(x) \, dr\,dz\\&=\int_{\partial^*E \cap \Phi ((B\cap \mathrm{Pr}(\partial^*E)) \times \mathbb{S}^{1}) }  
g (|x| , z) \,|\nu^{E}_{\!  x {\scriptscriptstyle\parallel}}(x,z)| \, d \mathcal{H}^{k-1} (x,z)\\
&= \int_{\partial^*E \cap \Phi (B \times \mathbb{S}^{1}) }  
g (|x| , z) \,|\nu^{E}_{\!  x {\scriptscriptstyle\parallel}}(x,z)| \, d \mathcal{H}^{k-1} (x,z),
\end{align*}
where for the inequality sign we used Proposition \ref{volpert theorem sets}, and the properties of the set $\mathrm{Pr}(\partial^*E)$ to infer that $\mathcal{H}^0((\partial^*E)_{(r,z)})\geq 2$ for $\mathcal{L}^{k-1}$-a.e. $(r,z)\in \mathrm{Pr}(\partial^*E)$. The above relation, together with \eqref{Dz mu for E}, and \eqref{rDr xi for E} proves \eqref{eq: the real calc lemmata weaker}. This concludes the third step and the proof of the lemma.
\end{proof}

\begin{remark}\label{rem: weak calculation lemmata specifically for Fmu}
Under the assumptions of the above lemma, let $B\subset U$ be a Borel set, and let $E\subset \mathbb{R}^{k}$ be a $\mu$-distributed set of finite perimeter in $\Phi(U\times\mathbb{S}^1)$ such that $(E)_{(r,z)}$ is $\mathcal{H}^1$-equivalent to a connected arc for $\mathcal{L}^{k-1}$-a.e. $(r,z)\in B$. Then, we get  that \eqref{eq: the real calc lemmata weaker} holds true with equality and in addition, as a consequence of Proposition \ref{coarea for sets}, we can drop the assumption of the non-negativity of the second component of the vector field appearing the formula, namely 
\begin{align}\label{eq: the real calc lemmata for Fmu}
\int_B\varphi(r,z)\cdot d\sigma_{\mu}(r,z)=\int_{\partial^*E\cap\Phi(B\times\mathbb{S}^1)}\varphi(|x|,z)\cdot\nu^E_{\mathsf{c}}(x,z)\,d\mathcal{H}^{k-1}(x,z),
\end{align}
for every bounded Borel function $\varphi: B \to \mathbb{R}^k$.
\end{remark}

\noindent
Let us mention that the idea of considering the Radon measure $\sigma_{\mu}$ was inspired by \cite[Section 4.1.5]{GMSbook1}. The next result can be seen as a refinement of \cite[Proposition 6.8]{CagnettiPeruginiStoger}.

\begin{lemma}\label{lem: P(F[v])=|sigma|}
Let $\mu  : (0, \infty) \times \R^{k-2} \to [0, \infty)$ be a Lebesgue measurable function
satisfying \eqref{compatibility for mu}, let $U\subset (0, \infty) \times \R^{k-2}$ be an open set, and let $E\subset \mathbb{R}^k$ be a $\mu$-distributed set such that $E$ has finite perimeter in $\Phi(U\times\mathbb{S}^1)$. Then, the set $F_\mu\subset \mathbb{R}^{k}$ defined in \eqref{def: F_mu} is a set of finite perimeter in $\Phi(U\times\mathbb{S}^1)$. Moreover,
\begin{align}\label{eq: |sigma|=Per (Fmu)}
|\sigma_{\mu}|(B)=P(F_\mu;\Phi(B\times\mathbb{S}^1)),\quad \forall\,B\subset U \textnormal{ Borel},
\end{align}
where $\sigma_{\mu}$ is the Radon measure defined in \eqref{ def of sigma}.
\end{lemma}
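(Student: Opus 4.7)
The plan is to combine the previous lemma (applied once to $E$ and once to $F_\mu$) with the classical theory of subgraphs of BV functions. First I would show that $|\sigma_\mu|$ is a finite Radon measure on $U$. Lemma \ref{lem: mu e xi in BV loc} already gives $|D_z\mu|(U)<\infty$ and $|rD_r\xi|(U)<\infty$. Finiteness of $\mathcal{L}^{k-1}(\{\mu>0\}\cap U)$ follows from the coarea formula of Proposition \ref{coarea for sets} applied to $E$: by Proposition \ref{volpert theorem sets}, for $\mathcal{L}^{k-1}$-a.e.\ $(r,z)\in\{\mu>0\}\cap U$ the slice $E_{(r,z)}$ is a proper finite-perimeter subset of $\partial B(r)$, hence $\mathcal{H}^0((\partial^*E)_{(r,z)})\geq 2$, so that
\[
2\mathcal{L}^{k-1}(\{\mu>0\}\cap U)\leq \int_{U}\mathcal{H}^0((\partial^*E)_{(r,z)})\,dr\,dz=\int_{\partial^*E\cap\Phi(U\times\mathbb{S}^1)}|\nu^E_{\!  x {\scriptscriptstyle\parallel}}|\,d\mathcal{H}^{k-1}\leq P(E;\Phi(U\times\mathbb{S}^1)).
\]

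Next I would establish that $F_\mu$ has locally finite perimeter in $\Phi(U\times\mathbb{S}^1)$, together with the bound $P(F_\mu;\Phi(B\times\mathbb{S}^1))\leq |\sigma_\mu|(B)$ for every Borel $B\subset U$. In cylindrical coordinates $(r,\theta,z)$ on $\mathbb{R}_0^2\times\mathbb{R}^{k-2}$, by the very definition \eqref{def: F_mu} one has $\chi_{F_\mu}(r\cos\theta,r\sin\theta,z)=\chi_{\{|\theta|<\xi(r,z)/2\}}$, which realises $F_\mu$ (in the cylindrical chart, away from $\{x=0\}$) as a symmetric subgraph of the $BV_{\textnormal{loc}}(U)$ function $\xi/2$. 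The classical theory of subgraphs of $BV$ functions (see \cite[Section~4.1.5]{GMSbook1}) then gives local finite perimeter, together with an explicit decomposition of the perimeter measure into a part supported on the two lateral surfaces $\theta=\pm\xi(r,z)/2$, and a part supported on the two boundary endpoints of each arc. After tracking the Jacobian factor $r$ in the change of variables, and using the chain rule identity \eqref{eq: chian rule fo mu e xi} to turn $D_r\mu$ into $rD_r\xi+\xi\,dr\,dz$, the lateral part contributes $|rD_r\xi|$ radially and $|D_z\mu|$ in the $z$-directions, while the endpoints contribute $2\mathcal{L}^{k-1}\mres(\{\mu>0\}\cap U)$. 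Gluing these contributions yields precisely $\sigma_\mu$ and the stated inequality.

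Once $F_\mu$ is known to have finite perimeter, it is itself a $\mu$-distributed set to which Lemma \ref{lem: mu e xi in BV loc} applies. By construction $(F_\mu)_{(r,z)}$ is $\mathcal{H}^1$-equivalent to a connected arc whenever $\mu(r,z)>0$, so Remark \ref{rem: weak calculation lemmata specifically for Fmu} yields the equality form
\[
\int_B\varphi(r,z)\cdot d\sigma_\mu(r,z)=\int_{\partial^*F_\mu\cap\Phi(B\times\mathbb{S}^1)}\varphi(|x|,z)\cdot\nu^{F_\mu}_{\mathsf{c}}(x,z)\,d\mathcal{H}^{k-1}(x,z),
\]
for every bounded Borel $\varphi:B\to\mathbb{R}^k$. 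Since $|\nu^{F_\mu}_{\mathsf{c}}|=|\nu^{F_\mu}|=1$, taking the supremum over bounded Borel $\varphi$ with $|\varphi|\leq 1$ gives $|\sigma_\mu|(B)\geq P(F_\mu;\Phi(B\times\mathbb{S}^1))$, which combined with the opposite inequality from the previous paragraph produces the desired equality \eqref{eq: |sigma|=Per (Fmu)}.

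I expect the main obstacle to be the second step. Two delicate points need care: first, the map $\Phi$ is only a diffeomorphism away from the axis $\{x=0\}$, so the transfer of the subgraph representation back to the $(x,z)$ coordinates requires carrying the Jacobian $r$ throughout every integration by parts; second, the appearance of $rD_r\xi$ in $\sigma_\mu$ (rather than the more natural $D_r\mu$) must be reconciled with the natural radial derivative of $\mu$, which is done via the chain rule in Lemma \ref{lem: mu e xi in BV loc}, the Lebesgue term coming exactly from the $\xi\,dr\,dz$ absolutely continuous part. A robust alternative would be to approximate $\mu$ by smooth $\mu_n$, compute $P(F_{\mu_n})$ via the area formula on the explicit graphs $\theta=\pm\xi_n(r,z)/2$, and pass to the limit using $L^1_{\textnormal{loc}}$ convergence $F_{\mu_n}\to F_\mu$ and lower semicontinuity of perimeter.
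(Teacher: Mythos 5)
Your overall scaffolding matches the paper's: first get finite perimeter of $F_\mu$, then exploit Remark~\ref{rem: weak calculation lemmata specifically for Fmu} (the equality form of \eqref{eq: the real calc lemmata weaker} valid for $F_\mu$ since its slices are arcs) to identify $|\sigma_\mu|$ with $P(F_\mu;\Phi(\cdot\times\mathbb{S}^1))$. However, the way you finish has a sign slip that masks a real redundancy: you claim step~2 gives $P(F_\mu;\Phi(B\times\mathbb{S}^1))\leq|\sigma_\mu|(B)$ and step~3 gives $|\sigma_\mu|(B)\geq P(F_\mu;\Phi(B\times\mathbb{S}^1))$, then call these ``opposite''---they are the same inequality. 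What the supremum over bounded Borel $\varphi$, $|\varphi|\leq 1$, in \eqref{eq: the real calc lemmata for Fmu} actually gives is $|\sigma_\mu|(B)\leq P(F_\mu;\Phi(B\times\mathbb{S}^1))$ (Cauchy--Schwarz on the right). The reverse direction $|\sigma_\mu|(B)\geq P(F_\mu;\Phi(B\times\mathbb{S}^1))$ is obtained by testing with the specific $\varphi=\bar\nu^{F_\mu}_{\mathsf{c}}$, which is admissible and saturates because $\nu^{F_\mu}_{\mathsf{c}}(x,z)$ is constant on slices by Proposition~\ref{real magic property}---not merely because $|\nu^{F_\mu}_{\mathsf{c}}|=1$. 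Once you see this, \emph{both} directions fall out of \eqref{eq: the real calc lemmata for Fmu} alone, and your step~2 bound $P(F_\mu)\leq|\sigma_\mu|$ is not needed for \eqref{eq: |sigma|=Per (Fmu)}---only finite perimeter of $F_\mu$ is. This is exactly how the paper proceeds in its Step~5, using the polar decomposition $d\sigma_\mu/d|\sigma_\mu|$.

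The genuine gap is in how you propose to establish finite perimeter of $F_\mu$. Invoking ``classical subgraph theory'' in the $(r,\theta,z)$ chart is not a free step: $\Phi$ is not an isometry, so the subgraph $\{|\theta|<\xi(r,z)/2\}$ carries a non-flat metric (the $\theta$-direction has weight $r$, and the volume Jacobian is $r$), and the perimeter of $F_\mu$ in $\mathbb{R}^k$ is not the flat-metric subgraph perimeter of $\xi/2$. The translation ``lateral surfaces $\to$ $|rD_r\xi|$, $|D_z\mu|$; endpoints $\to$ $2\mathcal{L}^{k-1}\mres\{\mu>0\}$'' is exactly the content that must be proved, not a black box one can cite. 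The paper handles this by smoothly approximating $\xi$ by $\xi_j$, performing an explicit Gauss--Green computation for $F_{\mu_j}$ split into tangential/radial/$z$ pieces, and passing to the limit to get the crude but sufficient bound $P(F_\mu;\Phi(\Omega\times\mathbb{S}^1))\leq 2P(E;\Phi(U\times\mathbb{S}^1))$; this is essentially the ``robust alternative'' you sketch at the end, and it is the route you should develop. Your coarea/Vol'pert argument for $\mathcal{L}^{k-1}(\{\mu>0\}\cap U)<\infty$ in the first paragraph is correct and matches Step~3 of Lemma~\ref{lem: mu e xi in BV loc}.
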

\begin{proof}
We divide the proof in several steps. We start by proving that the set  $F_\mu\subset\mathbb{R}^k$ is of finite perimeter in $\Phi(U\times\mathbb{S}^1)$. The argument we are going to use is standard, but for the seek of completeness and for future references we decided to include it (see \cite[Proposition 4.3]{CagnettiPeruginiStoger} for the same argument but in the spherical symmetrisation setting). Let $\Omega\subset \subset U$ be an open set. By Lemma \ref{lem: mu e xi in BV loc} $\xi\in BV(\Omega)$. Thus, by standard approximation techniques, let $(\xi_j)_{j\in\mathbb{N}}\subset C^1_c(\Omega;\mathbb{R}^k)$ be a sequence of non negative functions such that $\xi_j\to\xi$ for $\mathcal{L}^{k-1}$-a.e. $(r,z)\in\Omega$, and $|\nabla \xi_j|\mathcal{L}^{k-1}\stackrel{*}{\rightharpoonup} |D\xi|$, where the function $\xi$ was defined in \eqref{def:xi}, and with the symbol $\stackrel{*}{\rightharpoonup}$ we denote the weak star convergence of Radon measures. In the following, denoting with $\mu_j(r,z)=r\xi_j(r,z) $, we call $F_{\mu_j}\subset \mathbb{R}^k$ the set defined as in \eqref{def: F_mu} w.r.t. the function $\mu_j$.\\
\textbf{Step 0.} In this step we present some circular notation that we will need for the following calculations. Let $\varphi\in C^1_c(\Phi(\Omega\times\mathbb{S}^1),\mathbb{R}^k)$ with $|\varphi|\leq 1$. A direct calculation shows that 
\begin{align}
\text{div}_{(x,z)}\varphi(x,z)&=\text{div}_{(x)}\varphi_x(x,z)+\text{div}_{(z)}\varphi_z(x,z)\nonumber\\
&=\text{div}_{(x )\parallel}\varphi_{x {\scriptscriptstyle\parallel}}(x,z)+\nabla_x\varphi_x(x,z)[\hat{x}]\cdot\hat{x}+\frac{\varphi_x(x,z)\cdot\hat{x}}{|x|}+\text{div}_{(z)}\varphi_z(x,z),
\end{align}
where $\text{div}_{(x)}$, and $\text{div}_{(z)}$ stand for the classical divergence in $\mathbb{R}^2$ w.r.t. the variables $x_1$, and $x_2$, and the classical divergence in $\mathbb{R}^{k-2}$ w.r.t. the variables $z_1,\dots,z_{k-2}$, respectively, $\text{div}_{(x )\parallel}\varphi_{x {\scriptscriptstyle\parallel}}(x,z)$ stands for the tangential divergence in $\mathbb{R}^2$ of $\varphi_{x {\scriptscriptstyle\parallel}}(\cdot,z)$ at $(x,z)$ in $\partial B(|x|)$, and finally $\nabla_x$ is the classical gradient in $\mathbb{R}^2$ w.r.t. the variables $x_1$, and $x_2$. Thus,
\begin{align}\label{eq: div Fi = I+II+III}
\int_{\Phi(\Omega\times\mathbb{S}^1)}\chi_{F_{\mu_j}}(x,z)\text{div}_{(x,z)}\varphi(x,z)\, dx\, dz= \textbf{I}+\textbf{II}+\textbf{III},
\end{align}
where we set
\begin{align*}
\textbf{I}&:=\int_{\Phi(\Omega\times\mathbb{S}^1)}\chi_{F_{\mu_j}}(x,z)\text{div}_{(x)\parallel}\varphi_{x{\scriptscriptstyle\parallel}}(x,z)\,dx\,dz;\\
\textbf{II}&:=\int_{\Phi(\Omega\times\mathbb{S}^1)}\chi_{F_{\mu_j}}(x,z)\nabla_x\varphi_x(x,z)[\hat{x}]\cdot\hat{x}\,dx\,dz + \int_{\Phi(\Omega\times\mathbb{S}^1)}\chi_{F_{\mu_j}}(x,z)\frac{\varphi_x(x,z)\cdot\hat{x}}{|x|}\,dx\, dz;\\
\textbf{III}&:= \int_{\Phi(\Omega\times\mathbb{S}^1)}\chi_{F_{\mu_j}}(x,z)\text{div}_{(z)}\varphi_{z}(x,z)\,dx\,dz.
\end{align*}
\textbf{Step 1.} In this step we study the quantity identified with \textbf{I}. Let us observe that, by construction of $F_{\mu_j}$, the slice $(F_{\mu_j})_{(r,z)}$ is a connected arc in $\partial B(r)$ for $\mathcal{L}^{k-1}$-a.e. $(r,z)\in \Omega$. By the theory of sets of finite perimeter in $\partial B(r)$ (see for instance \cite[Section 3.2]{CagnettiPeruginiStoger}), and Proposition \ref{volpert theorem sets} we get that
$$2=\mathcal{H}^0(\partial^*((F_{\mu_j})_{(r,z)}))\leq \mathcal{H}^0(\partial^*((E)_{(r,z)}))=\mathcal{H}^0((\partial^*E)_{(r,z)})\quad \textnormal{for }\mathcal{L}^{k-1}\textnormal{-a.e. }(r,z)\in \{\mu_j>0 \}\subset \Omega,
$$
while $\mathcal{H}^0(\partial^*((F_{\mu_j})_{(r,z)}))=0$ for $\mathcal{L}^{k-1}$-a.e. $(r,z)\in \Omega\setminus \{\mu_j>0 \}$.
Thus, applying the structure theorem for sets of finite perimeter on $\partial B(r)$ (see once more \cite[Section 3.2]{CagnettiPeruginiStoger}) we get
\begin{align}\label{eq: Fmu f.p. step 1}
&\int_{\Phi(\Omega\times\mathbb{S}^1)}\chi_{F_{\mu_j}}(x,z)\text{div}_{(x)\parallel}\varphi_{x{\scriptscriptstyle\parallel}}(x,z)\,dx\,dz= \int_{\Omega}dr\,dz\int_{(F_{\mu_j})_{(r,z)}}\text{div}_{(x)\parallel}\varphi_{x{\scriptscriptstyle\parallel}}(x,z)\,d\mathcal{H}^1(x)\nonumber\\
&\leq \int_{\Omega}\mathcal{H}^0((\partial^*E)_{(r,z)})\,dr\,dz =\int_{\partial^*E\cap \Phi(\Omega\times\mathbb{S}^1)}|\nu^E_{x{\scriptscriptstyle\parallel}}(x,z)|\,d\mathcal{H}^{k-1}(x,z)\leq P(E;\Phi(U\times\mathbb{S}^1)),
\end{align}
where for the last equality sign we used Proposition \ref{coarea for sets}. This concludes the first step.\\
\textbf{Step 2.} In this step we study the quantity identified with \textbf{II}. Let us introduce the following quantity
\begin{align*}
V_j(r,z):=\int_{(F_{\mu_j})_{(r,z)}}\varphi_x(x,z)\cdot \hat{x}\,d\mathcal{H}^1(x)=r\int_{-\xi_j(r,z)/2}^{\xi_j(r,z)/2}\varphi_x(r\omega(\theta),z)\cdot\omega(\theta)\,d\theta \quad \forall\, (r,z)\in \Omega,
\end{align*}
where $\xi(r,z)=\mu_j(r,z)/r$, and $\omega(\theta)=(\cos(\theta),\sin(\theta))\in \mathbb{S}^1$. By regularity properties of $\mu_j$, and of $\varphi_x$ the above quantity is differentiable in the $r$ variable, and a direct computation shows that
\begin{align*}
&\frac{\partial}{\partial r}V_j(r,z)=\int_{-\xi_j(r,z)/2}^{\xi_j(r,z)/2}\varphi_x(r\omega(\theta),z)\cdot\omega(\theta)\,d\theta\\
&+\frac{1}{2}r\frac{\partial}{\partial r}\xi_j(r,z)\left( \varphi_x(r\omega(\xi_j(r,z)/2),z)\cdot \omega(\xi_j(r,z)/2) +  \varphi_x(r\omega(-\xi_j(r,z)/2),z)\cdot \omega(-\xi_j(r,z)/2) \right) \\
&+ r\int_{-\xi_j(r,z)/2}^{\xi_j(r,z)/2}\nabla_x\varphi_x(r\omega(\theta),z)[\omega(\theta)]\cdot\omega(\theta)\,d\theta.
\end{align*}
In order to keep the notation a bit more compact, let us set
\begin{align*}
A_{x}(r,z)&= \varphi_x(r\omega(\xi_j(r,z)/2),z)\cdot \omega(\xi_j(r,z)/2),\\
B_{x}(r,z)&=  \varphi_x(r\omega(-\xi_j(r,z)/2),z)\cdot \omega(-\xi_j(r,z)/2).
\end{align*}
Let us observe that, by construction, the function $V_j$ has compact support in $\Omega$. Thus, integrating both sides of the above relation over $\Omega$ and applying Fubini theorem together with the fundamental theorem of calculus we get
\begin{align*}
&0=\int_{\Omega}\frac{\partial}{\partial r}V_j(r,z)\,dr\,dz=\int_{\Omega}dr\,dz\int_{-\xi_j(r,z)/2}^{\xi_j(r,z)/2}\varphi_x(r\omega(\theta),z)\cdot\omega(\theta)\,d\theta\\
&+\frac{1}{2}\int_{\Omega}r\frac{\partial}{\partial r}\xi_j(r,z)A_x(r,z) \,dr\,dz\\
&+\frac{1}{2}\int_{\Omega}r\frac{\partial}{\partial r}\xi_j(r,z)B_x(r,z) \,dr\,dz\\
&+\int_{\Omega} r\int_{-\xi_j(r,z)/2}^{\xi_j(r,z)/2}\nabla_x\varphi_x(r\omega(\theta),z)[\omega(\theta)]\cdot\omega(\theta)\,d\theta\,dr\,dz.
\end{align*}

Thus, after a changing variables, we get 
\begin{align}\label{eq: Fmu f.p. step 2}
&\int_{\Phi(\Omega\times\mathbb{S}^1)}\chi_{F_{\mu_j}}(x,z)\frac{\varphi_x(x,z)\cdot \hat{x}}{|x|}\,dx\,dz + \int_{\Phi(\Omega\times\mathbb{S}^1)}\chi_{F_{\mu_j}}(x,z)\nabla_x\varphi_x(x,z)[\hat{x}]\cdot\hat{x}\,dx\,dz\nonumber\\
&=-\frac{1}{2}\int_{\Omega}r\frac{\partial}{\partial r}\xi_j(r,z) A_x(r,z)\,dr\,dz - \frac{1}{2}\int_{\Omega}r\frac{\partial}{\partial r}\xi_j(r,z)B_x(r,z) \,dr\,dz
\end{align}
This concludes the second step.\\
\textbf{Step 3.} In this step we study the quantity identified with \textbf{III}. Similarly to what we did in the previous step, we consider the following auxiliary quantity
\begin{align*}
Z_j^i(r,z):=\int_{(F_{\mu_j})_{(r,z)}}(\varphi_{z})_i(x,z)\,d\mathcal{H}^1(x)=r\int_{-\xi_j(r,z)/2}^{\xi_j(r,z)/2}(\varphi_z)_i(x,z)\,d\theta \quad \forall\, (r,z)\in \Omega,
\end{align*}
where by $(\varphi_{z})_i$ stands for the $i$-th component of the vector $\varphi_z$, with $i=1,\dots,k-2$. Let us set
$$
\nabla_z\xi_j(r,z)= \left(\frac{\partial}{\partial z_1}\xi_j(r,z),\dots, \frac{\partial}{\partial  z_{k-2}}\xi_j(r,z) \right).
$$
Following verbatim the argument used in the step 2, and calling

\begin{align*}
A_z(r,z)=  \varphi_z(r\omega(\xi_j(r,z)/2),z),\quad
B_z(r,z)=\varphi_z(r\omega(-\xi_j(r,z)/2),z),
\end{align*}
we get that
\begin{align}\label{eq: Fmu f.p. step 3}
&\int_{\Phi(\Omega\times\mathbb{S}^1)}\chi_{F_{\mu_j}}(x,z)\text{div}_{(z)}\varphi_z(x,z)\,dx\,dz\nonumber\\
&=-\frac{1}{2}\int_{\Omega}r\nabla_z\xi_j(r,z)\cdot \left( \varphi_z(r\omega(\xi_j(r,z)/2),z) + \varphi_z(r\omega(-\xi_j(r,z)/2),z) \right)\,dr\,dz\\
&=-\frac{1}{2}\int_{\Omega}r\nabla_z\xi_j(r,z)\cdot A_z(r,z)\,dr\,dz - \frac{1}{2}\int_{\Omega}r\nabla_z\xi_j(r,z)\cdot B_z(r,z)\,dr\,dz.\nonumber
\end{align}
This concludes the third step.\\
\textbf{Step 4.} In this step we finally prove that $F_\mu$ has finite perimeter in $\Phi(U\times\mathbb{S}^1)$. Indeed, thanks to the previous step, in particular plugging into \eqref{eq: div Fi = I+II+III} the relations obtained in \eqref{eq: Fmu f.p. step 1}, \eqref{eq: Fmu f.p. step 2}, and \eqref{eq: Fmu f.p. step 3} we get
\begin{align*}
&\int_{\Phi(\Omega\times\mathbb{S}^1)}\chi_{F_{\mu_j}}(x,z)\text{div}_{(x,z)}\varphi(x,z)\, dx\, dz\leq P(E;\Phi(U\times\mathbb{S}^1))\\ 
&-\frac{1}{2} \int_{\Omega}r\nabla \xi_j(r,z)\cdot(A_x(r,z),A_z(r,z)) \,dr\,dz
-\frac{1}{2} \int_{\Omega}r\nabla \xi_j(r,z)\cdot(B_x(r,z),B_z(r,z)) \,dr\,dz.
\end{align*}
Let us now observe that by construction, we have that both quantities $|(A_x(r,z),A_z(r,z))|$, and $|(B_x(r,z),B_z(r,z))|$ are less than 1. Thus, from the above relation we get that
\begin{align*}
&\int_{\Phi(\Omega\times\mathbb{S}^1)}\chi_{F_{\mu_j}}(x,z)\text{div}_{(x,z)}\varphi(x,z)\, dx\, dz\leq P(E;\Phi(U\times\mathbb{S}^1)) +  \int_{\mathrm{Pr}(\text{supp}(\varphi))}r\left| \nabla \xi_j(r,z)\right| \,dr\,dz,
\end{align*}
where $\mathrm{Pr}(\text{supp}(\varphi))\subset \Omega$ is the projection in $(0,\infty)\times\mathbb{R}^{k-2}$ of the support of $\varphi$, namely
$$
\mathrm{Pr}(\text{supp}(\varphi))=\{(r,z)\in \Omega:\, (\text{supp}(\varphi))_{(r,z)}\neq \emptyset \}.
$$ 
Let us also observe that $\mathrm{Pr}(\text{supp}(\varphi))$ is a compact set in $\Omega$. Recalling that $|\nabla \xi_j|\mathcal{L}^{k-1}\stackrel{*}{\rightharpoonup} |D\xi|$ we immediately get that 
$
r|\nabla \xi_j|\mathcal{L}^{k-1}\stackrel{*}{\rightharpoonup} r|D\xi|.
$
Moreover, since $\xi_j\to\xi$ for $\mathcal{L}^{k-1}$-a.e. $(r,z)\in \Omega$, by the definition of $\mu_j$ we get $\mu_j\to\mu$ for $\mathcal{L}^{k-1}$-a.e. $(r,z)\in \Omega$, which implies that $\chi_{F_{\mu_j}}\to \chi_{F_\mu}$ for $\mathcal{L}^{k}$-a.e. $(x,z)\in \Phi(\Omega\times\mathbb{S}^1)$. Thus, 
\begin{align}\label{eq: final estimate for the P(F mu)}
&\int_{\Phi(\Omega\times\mathbb{S}^1)}\chi_{F_{\mu}}(x,z)\text{div}_{(x,z)}\varphi(x,z)\, dx\, dz=\limsup_{j\to\infty}\int_{\Phi(\Omega\times\mathbb{S}^1)}\chi_{F_{\mu_j}}(x,z)\text{div}_{(x,z)}\varphi(x,z)\, dx\, dz\nonumber\\
&\leq P(E;\Phi(U\times\mathbb{S}^1)) +\limsup_{j\to\infty}  \int_{\mathrm{Pr}(\text{supp}(\varphi))}r\left| \nabla \xi_j(r,z)\right| \,dr\,dz\nonumber \\
&\leq P(E;\Phi(U\times\mathbb{S}^1)) + \, r\left| D \xi\right|(\mathrm{Pr}(\text{supp}(\varphi)))\leq P(E;\Phi(U\times\mathbb{S}^1)) + \, r\left| D \xi\right|(\Omega).
\end{align}
In order to conclude, let us observe that
\begin{align*}
&r|D\xi|(\Omega)=\sup\left\{\int_{\Omega}\psi(r,z)\cdot d(rD_r\xi,rD_{z_i}\xi,\dots,rD_{z_{k-2}}\xi)(r,z):\, \psi\in C^0_c(\Omega;\mathbb{R}^k),\, |\psi|\leq 1 \right\}\\
&\stackrel{\eqref{eq: chian rule fo mu e xi}}{=}\sup\left\{\int_{\Omega}\psi(r,z)\cdot d(rD_r\xi,D_{z_i}\mu,\dots,D_{z_{k-2}}\mu)(r,z):\, \psi\in C^0_c(\Omega;\mathbb{R}^k),\, |\psi|\leq 1 \right\}\\
&\leq P(E;\Phi(\Omega\times\mathbb{S}^1))\leq P(E;\Phi(U\times\mathbb{S}^1)),
\end{align*}
where for the second last inequality we used  \eqref{Dz mu for E}, and \eqref{rDr xi for E}. Combining the above relation with the estimate obtained in \eqref{eq: final estimate for the P(F mu)} we get that
\begin{align}\label{eq: chi Fmu in BV}
&\int_{\Phi(\Omega\times\mathbb{S}^1)}\chi_{F_{\mu}}(x,z)\text{div}_{(x,z)}\varphi(x,z)\, dx\, dz\leq 2P(E;\Phi(U\times\mathbb{S}^1))<\infty.
\end{align}
Taking the sup over all test functions $\varphi\in C^1_c(\Phi(\Omega\times\mathbb{S}^1))$ with $|\varphi|\leq 1$ on the left hand side of the above relation we get that $F_\mu$ has finite perimeter in $\Phi(\Omega\times\mathbb{S}^1)$ for every $\Omega\subset \subset U$ open set. Since the right hand side of \eqref{eq: chi Fmu in BV} does not depend on $\Omega$, by standard arguments we conclude that $F_\mu$ has finite perimeter in $\Phi(U\times\mathbb{S}^1)$. This concludes step four, and we can now proceed to prove relation \eqref{eq: |sigma|=Per (Fmu)}.\\
\textbf{Step 5.} As we said, we are left to prove \eqref{eq: |sigma|=Per (Fmu)}. By standard measure theory, since $|\sigma_{\mu}|$ is a Radon measures on $U$, it is sufficient to show that \eqref{eq: |sigma|=Per (Fmu)} holds true for every open set $A\subset U$. We start proving that
\begin{align}\label{eq: |sigma| < Per(Fmu)}
|\sigma_{\mu}|(A)\leq P(F_\mu;\Phi(A\times\mathbb{S}^1))\,\quad\forall\,A\subset U\textnormal{ open}.
\end{align}
Let $\varphi\in C^0_c(A;\mathbb{R}^k)$ with $|\varphi|\leq 1$, and let $V\subset\subset U$ be an open set such that $\textnormal{supp} (\varphi)\subset V$. Then, since we proved that $F_\mu$ has finite perimeter in $\Phi(U\times\mathbb{S}^1)$, and since by construction $(F_\mu)_{(r,z)}$ is $\mathcal{H}^1$-equivalent to a connected arc in $\partial B(r)$ for $\mathcal{L}^{k-1}$-a.e. $(r,z)\in A$, we can apply \eqref{eq: the real calc lemmata for Fmu}  thus obtaining 
\begin{align*}
&\int_A \varphi(r,z)\cdot d\sigma_{\mu}(r,z)=\int_V \varphi(r,z)\cdot d\sigma_{\mu}(r,z)\\
&\stackrel{\eqref{eq: the real calc lemmata for Fmu}}{=}\int_{\partial^*F_\mu\cap \Phi(V\times\mathbb{S}^1)} \varphi(|x|,z)\cdot\nu^{F_\mu}_{\mathsf{c}}(x,z)\,d\mathcal{H}^{k-1}(x,z)\leq P(F_\mu;\Phi(A\times\mathbb{S}^1))<\infty,
\end{align*}
where in the first inequality we used Schwartz inequality. Passing to the sup in the left hand side among all $\varphi\in C^0_c(A;\mathbb{R}^k)$ with $|\varphi|\leq 1$ , we prove \eqref{eq: |sigma| < Per(Fmu)}. Let us now prove the reverse inequality, namely
\begin{align}\label{eq: |sigma| > Per(Fmu)}
|\sigma_{\mu}|(A)\geq P(F_\mu;\Phi(A\times\mathbb{S}^1)).
\end{align}
\noindent
Recall now the definition of the Borel vector field $\bar{\nu}^{F_\mu}_{\mathsf{c}}:(0,\infty)\times\mathbb{R}^{k-2}\to \mathbb{R}^{k}$ that was given in \eqref{def: bar(nu)}.
\noindent
Thus, denoting by $d\sigma_{\mu}/ d|\sigma_{\mu}|:U\to\mathbb{S}^{k-1}$ the polar decomposition of $\sigma_{\mu}$, we get
\begin{align*}
P&(F_\mu;\Phi(A\times\mathbb{S}^1))=\int_{\partial^*F_\mu\cap \Phi(A\times\mathbb{S}^1)}\!\!\!\! 1\, d\mathcal{H}^{k-1}(x,z)= \int_{\partial^*F_\mu\cap \Phi(A\times\mathbb{S}^1)}\bar{\nu}^{F_\mu}_{\mathsf{c}}(|x|,z)\cdot\nu^{F_\mu}_{\mathsf{c}}(x,z)\, d\mathcal{H}^{k-1}(x,z)
\\&\stackrel{\eqref{eq: the real calc lemmata for Fmu}}{=}\int_{A} \bar{\nu}^{F_\mu}_{\mathsf{c}}(r,z)\cdot d\sigma_{\mu}(r,z)= \int_{A} \bar{\nu}^{F_\mu}_{\mathsf{c}}(r,z)\cdot\frac{d\sigma_{\mu}}{d|\sigma_{\mu}|}(r,z)\,d|\sigma_{\mu}|(r,z)
\leq \int_{A} 1\,d|\sigma_{\mu}|(r,z)=|\sigma_{\mu}|(A),
\end{align*}
where in the last inequality we used the Schwartz inequality. This concludes the proof of \eqref{eq: |sigma| > Per(Fmu)} which together with \eqref{eq: |sigma| < Per(Fmu)} gives \eqref{eq: |sigma|=Per (Fmu)}. 
\end{proof}

\begin{remark}
Let us observe that, as a consequence of \eqref{eq: |sigma|=Per (Fmu)}, and thanks to the argument used to prove it, we get that
\begin{align}\label{eq: dsigma/d|sigma =| nu^Fmu_circ}
&\frac{d\sigma_{\mu}}{d|\sigma_{\mu}|}(r,z)=\bar{\nu}^{F_\mu}_{\mathsf{c}}(r,z)\quad \textnormal{for }|\sigma_{\mu}|\textnormal{-a.e. }(r,z)\in U,
\end{align}
where $\bar{\nu}^{F_\mu}_{\mathsf{c}}$ was defined in \eqref{def: bar(nu)}.
\end{remark}


\begin{remark}\label{formula per Fmu}
Another consequence of relation \eqref{eq: |sigma|=Per (Fmu)} is the following formula for the perimeter of $F_\mu$, namely for every $B\subset U$ Borel we have that
\begin{align*}
&P(F_\mu;\Phi(B\times\mathbb{S}^1))=2\int_{B}\sqrt{1+ \frac{1}{4}\left| r\frac{\partial}{\partial r}\xi(r,z) \right|^2+ \frac{1}{4}\left|\nabla_z\, \mu(r,z)\right|^2}\,dr\, dz + \left|\left(D^s_r\xi,D_z^s\mu \right)\right|(B),
\end{align*}
where by $\frac{\partial}{\partial r}\xi$, and $\nabla_z \mu$ we denote the first component of $D^a \xi$, and the last $(k-2)$ components of $D^a \mu$, respectively.
\end{remark}

\section{Characterisation of equality cases}
\label{sec: characterisation of equality cases}

\begin{proof}[Proof of Theorem \ref{Eq. cases per. ineq.}]
Let us prove \eqref{eq: circular per ineq}. Indeed, 
\begin{align*}
|\sigma_{\mu}|(B)&\stackrel{\eqref{eq: dsigma/d|sigma =| nu^Fmu_circ}}{=}\int_{B}\bar{\nu}^{F_\mu}_{\mathsf{c}}(r,z)\cdot \frac{d\sigma_{\mu}}{d|\sigma_{\mu}|}(r,z)\,d|\sigma_{\mu}|(r,z)=\int_{B}\bar{\nu}^{F_\mu}_{\mathsf{c}}(r,z)\cdot d\sigma_{\mu}(r,z)\\
&\stackrel{\eqref{eq: the real calc lemmata weaker}}{\leq} \int_{\partial^*E\cap \Phi(B\times\mathbb{S}^1)}\bar{\nu}^{F_\mu}_{\mathsf{c}}(|x|,z)\cdot \nu^E_{\mathsf{c}}(x,z)\, d\mathcal{H}^{k-1}(x,z)
\leq P(E;\Phi(B\times\mathbb{S}^1)),
\end{align*}
where for the last inequality we used Schwartz inequality. This, together with \eqref{eq: |sigma|=Per (Fmu)} proves \eqref{eq: circular per ineq}. Immediately from the above chain of inequalities we get that conditions a) and b) are sufficient to have $P(F_\mu;\Phi(B\times\mathbb{S}^1))=P(E;\Phi(B\times\mathbb{S}^1))$. Indeed, by condition a) we get an equality sign in \eqref{eq: the real calc lemmata weaker}, while by condition b) we get the equality sign in the last inequality appearing above. Vice versa, let us assume that $P(F_\mu;\Phi(B\times\mathbb{S}^1))=P(E;\Phi(B\times\mathbb{S}^1))$. Then, by the equality sign in \eqref{eq: the real calc lemmata weaker} we get that condition a) is satisfied. Moreover, by imposing the equality sign also in the last inequality appearing in the above relations we get that,
\begin{align*}
\bar{\nu}^{F_\mu}_{\mathsf{c}}(|x|,z)\cdot \nu^E_{\mathsf{c}}(x,z)=1\quad \textnormal{for }\mathcal{H}^{k-1}\textnormal{-a.e. }(x,z)\in \partial^*E\cap \Phi(B\times\mathbb{S}^1).
\end{align*}
Thus, up to remove a set $N\subset \partial^*E$ with $\mathcal{H}^{k-1}(N)=0$ we have that
\begin{align*}
\bar{\nu}^{F_\mu}_{\mathsf{c}}(|x|,z)\cdot \nu^E_{\mathsf{c}}(x,z)=1\quad \textnormal{for every }(x,z)\in (\partial^*E\setminus N)\cap \Phi(B\times\mathbb{S}^1),
\end{align*}
which recalling Proposition \ref{real magic property}, is equivalent to say that for every $(r,z)\in B$ such that both $(\partial^*F_\mu)_{(r,z)}\neq \emptyset$, and $(\partial^*E\setminus N)_{(r,z)}\neq \emptyset$, we have that $\nu^E_{\mathsf{c}}(x,z)=\bar{\nu}^{F_\mu}_{\mathsf{c}}(|x|,z)$ for every $x\in (\partial^*E\setminus N)_{(r,z)}$. This directly implies condition b), and so we conclude the proof.
\end{proof}

\begin{remark}
Let $B\subset U$ be a Borel set such that we are in an equality case for \eqref{eq: circular per ineq} w.r.t. the set $B$. Let us stress that calling with $\tilde{B}\subset B$ the set 
$$
\tilde{B}:=\left\{(r,z)\in B:\, (\partial^*F_\mu)_{(r,z)}\neq \emptyset,\textnormal{ and }(\partial^*E\setminus N)_{(r,z)}\neq \emptyset \right\},
$$
we have that
\begin{align}\label{eq:final remark}
P(E;\Phi(\tilde{B}\times\mathbb{S}^1))=P(E;\Phi(B\times\mathbb{S}^1))=P(F_\mu;\Phi(B\times\mathbb{S}^1))=P(F_\mu;\Phi(\tilde{B}\times\mathbb{S}^1)).
\end{align}
Indeed, if we consider the following two sets
\begin{align*}
&B_1:=\left\{(r,z)\in B:\, (\partial^*F_\mu)_{(r,z)}=\emptyset\right\},\\
&B_2:=\left\{(r,z)\in B:\,(\partial^*E\setminus N)_{(r,z)}= \emptyset  \right\},
\end{align*}
we get that $B\setminus \tilde{B}=B_1\cup B_2$ and
\begin{align*}
0=P(F_\mu;\Phi(B_1\times\mathbb{S}^1))=P(E;\Phi(B_1\times\mathbb{S}^1))=|\sigma_\mu|(B_1),\\
0=P(E;\Phi(B_2\times\mathbb{S}^1))=P(F_\mu;\Phi(B_2\times\mathbb{S}^1))=|\sigma_\mu|(B_2)
\end{align*}
from which we easily deduce \eqref{eq:final remark}.
\end{remark}

\section{Steiner symmetrisation setting}\label{sec: Steiner setting}
In this section we will present the results obtained for the circular symmetrisation, but for the Steiner setting. We will present the results without proofs since they can be obtained  by adapting the arguments used in the previous sections. 

\medskip
Let $k \in \mathbb{N}$, with $k \geq 2$.
We will decompose $\mathbb{R}^k$ as $\mathbb{R}^{k-1} \times \mathbb{R}$, 
and we will write $(x',y) \in \mathbb{R}^k$, with $x' \in \mathbb{R}^{k-1}$ and $y \in \mathbb{R}$. We are now going to define the Steiner symmetral of a Borel set in $\mathbb{R}^k$ with respect to the hyperplane 
\( \{ (x', y) \in \mathbb{R}^{k} : y = 0 \} = \mathbb{R}^{k-1}\times\{ 0 \}. \)
For every Borel set $E \subset \mathbb{R}^{k}$ we define
\[
E_{x'} := \{ y \in \mathbb{R}:\, (x',y) \in E \} 
\qquad \text{ for every } x' \in \mathbb{R}^{k-1},
\]
Let now $v : \mathbb{R}^{k-1} \to [0, \infty)$ be a Lebesgue measurable function. We will say that $E$ is \textit{$v$-distributed} if 
\[
v (x') = \mathcal{H}^1 (E_{x'}), \quad \text{ for $\mathcal{L}^{k-1}$-a.e. } x' 
\in \mathbb{R}^{k-1}.
\]
Given a Lebesgue measurable function $v: \mathbb{R}^{k-1} \to [0, \infty)$ we define the set $F[v]\subset \mathbb{R}^k$ as 
\begin{align}\label{def: F[v]}
F[v] := \left\{ (x', y) \in  \mathbb{R}^{k} :\,  
 |y|<\frac{1}{2} v(x') \right\}.
\end{align}

\begin{remark} \label{Steiner rem used for monotonicity}
Note that by definition of $F[v]$, we have 
\[
(x', y) \in F[v] \, \Longrightarrow \,
(x', z) \in F[v]
\quad \forall \, z\in\mathbb{R} \textnormal{ such that }   |z|\leq |y|.
\]
\end{remark}

If $E \subset \mathbb{R}^{k}$ is a $v$-distributed Borel set, we say that $F[v]$ is the \textit{Steiner  symmetral} of $E$
with respect to the hyperplane 
\( \{ (x', y) \in \mathbb{R}^{k} : y = 0 \} \).

\subsection{Properties of $F[v]$ and $v$}
Next result is the Steiner counterpart of Lemma \ref{lem: decreasing densities}.
\begin{lemma}
Let $v  : \mathbb{R}^{k-1} \to [0, \infty)$ be a Lebesgue measurable. Let $x' \in  \mathbb{R}^{k-1}$. Then, the functions 
\[
z \mapsto \theta_* (F[v],  ( x',z))
\quad 
\text{ and } 
\quad
z \mapsto \theta^* (F[v],  (x' ,z ))
\]
are even in $(-\infty,\infty)$ and non increasing in $[0, \infty)$.
\end{lemma}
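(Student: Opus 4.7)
The plan is to mirror the proof of Lemma~\ref{lem: decreasing densities}, with vertical translations $T_y(x',w) := (x', w+y)$ playing the role of the rotations $R_\gamma$; the product structure of $F[v]$ makes the argument even more direct.

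For evenness, the set $F[v]$ defined in \eqref{def: F[v]} is invariant under the reflection $(x', w) \mapsto (x', -w)$, and this same reflection maps $B_\rho((x', z))$ onto $B_\rho((x', -z))$ for every $\rho > 0$. Consequently
\[
\mathcal{H}^k\bigl(F[v]\cap B_\rho((x', z))\bigr) = \mathcal{H}^k\bigl(F[v]\cap B_\rho((x', -z))\bigr) \qquad \forall\,\rho>0,
\]
and both densities are therefore even functions of $z$.

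For monotonicity on $[0,\infty)$, I would fix $0\leq z_1 < z_2$ and choose $\rho>0$ so small that $B_\rho((x', z_1)) \cap B_\rho((x', z_2)) = \emptyset$. The first step (analogous to Step~1 of Lemma~\ref{lem: decreasing densities}) is to establish the slicewise inequality
\[
\mathcal{H}^1\bigl(F[v] \cap B_\rho((x', z_2)) \cap (\{x''\}\!\times\!\mathbb{R})\bigr) \leq \mathcal{H}^1\bigl(F[v] \cap B_\rho((x', z_1)) \cap (\{x''\}\!\times\!\mathbb{R})\bigr)
\]
for every $x'' \in \mathbb{R}^{k-1}$. Setting $s := \sqrt{(\rho^2-|x''-x'|^2)_+}$ and $I_v := \bigl(-v(x'')/2, v(x'')/2\bigr)$, both sides equal the Lebesgue measure of $I_v \cap (z_i-s, z_i+s)$ for $i=1,2$. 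Since $I_v$ is symmetric around $0$, the function $z \mapsto \bigl|I_v \cap (z-s, z+s)\bigr|$ is non-increasing in $|z|$ by an elementary trapezoidal computation (which is the analogue of Remark~\ref{Steiner rem used for monotonicity} at the level of measures), whence the slicewise inequality.

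Integrating over $x'' \in \mathbb{R}^{k-1}$ via Fubini yields
\[
\mathcal{H}^k\bigl(F[v]\cap B_\rho((x',z_2))\bigr) \leq \mathcal{H}^k\bigl(F[v]\cap B_\rho((x',z_1))\bigr),
\]
and dividing by $\omega_k\,\rho^k$ and passing to the $\liminf$ (respectively $\limsup$) as $\rho\to 0^+$ gives the required monotonicity for $\theta_*$ (respectively $\theta^*$). The only technical point is the elementary interval lemma; everything else is a direct translation of the circular argument, and in fact the absence of curvature makes it notably shorter, so I do not anticipate any real obstacle.
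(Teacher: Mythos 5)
Your proof is correct and is precisely the adaptation of Lemma~\ref{lem: decreasing densities} that the paper intends (the paper omits the Steiner proofs, saying they follow by adapting the circular arguments). You correctly note the simplification: because the vertical slices of $F[v]$ are the symmetric intervals $(-v(x'')/2, v(x'')/2)$, the slicewise inequality reduces to the elementary fact that $z \mapsto |(-a,a)\cap(z-s,z+s)|$ (a convolution of two boxcars) is even and non-increasing in $|z|$, which replaces the Step~1 argument of the circular proof that needed the disjointness of the two balls together with Remark~\ref{rem used for monotonicity}; in your version the disjointness hypothesis on $\rho$ is actually superfluous, though harmless.
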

\noindent
The following result is the Steiner counterpart of Proposition \ref{magic property}.
\begin{proposition} \label{Steiner magic property}
Let $v :\mathbb{R}^{k-1}\to [0, \infty)$ be a Lebesgue measurable function such that $F[v]$ is a set of locally finite perimeter. Suppose that $(x',y) \in \partial^* F[v]$.
Then
\begin{align}\label{formula symmetric normals in slices Steiner}
\nu^{F[v]} (x', z) = \nu^{F[v]}(x',y) 
\end{align}
for every $z \in \left[ \min\{ y, 0 \}, \max\{ y, 0 \} \right]$ such that
$(x', z) \in ( \partial^* F[v] )_{x'}$. 
\end{proposition}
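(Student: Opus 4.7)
The plan is to adapt the proof of Proposition \ref{magic property} to the Steiner setting by replacing the planar rotation $R_\gamma$ with the vertical translation $T_s(\bar x',\eta):=(\bar x',\eta+s)$, the circular slices $\partial B^2_\lambda(0,\bar z)$ with the vertical lines $\{\bar x'\}\times\mathbb{R}$, and the circular monotonicity of $F_\mu$ with the one-dimensional monotonicity of the symmetric intervals $(F[v])_{\bar x'}=(-v(\bar x')/2,v(\bar x')/2)$ provided by Remark \ref{Steiner rem used for monotonicity}. The role played by Step 1 of Lemma \ref{lem: decreasing densities} will be played here by an analogous slice-wise comparison between the balls $B_\rho(x',y)$ and $B_\rho(x',z)$.

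First I would reduce to the case $y>0$ (the case $y<0$ is handled by reflection and $y=0$ is vacuous), fix $z\in[0,y)$ with $(x',z)\in\partial^*F[v]$, and set $\nu:=\nu^{F[v]}(x',y)$, $p_1:=(x',z)$, $p_2:=(x',y)$, $s:=z-y<0$. Since $T_s$ is a rigid motion taking $p_2$ to $p_1$ and preserving the direction $\nu$, one has $T_s(H^+_{p_2,\nu})=H^+_{p_1,\nu}$. Next I would pick $\rho\in(0,(y-z)/2)$, so that $B_\rho(p_1)\cap B_\rho(p_2)=\emptyset$.

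The heart of the argument is the slice-wise inequality
\begin{equation*}
\mathcal{H}^1\bigl((F[v]\cap B_\rho(p_2)\cap H^+_{p_2,\nu})_{\bar x'}\bigr)\leq \mathcal{H}^1\bigl((F[v]\cap B_\rho(p_1)\cap H^+_{p_1,\nu})_{\bar x'}\bigr),\qquad \bar x'\in\mathbb{R}^{k-1}.
\end{equation*}
If $F[v]_{\bar x'}$ does not meet $(B_\rho(p_2))_{\bar x'}$, the left-hand side vanishes; otherwise $v(\bar x')/2>y-\sqrt{\rho^2-|\bar x'-x'|^2}$, which combined with $\rho<(y-z)/2$ and $z\geq 0$ gives $v(\bar x')/2>z+\sqrt{\rho^2-|\bar x'-x'|^2}$, so $F[v]_{\bar x'}\supseteq(B_\rho(p_1))_{\bar x'}$. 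Then the right-hand side equals $\mathcal{H}^1((B_\rho(p_1)\cap H^+_{p_1,\nu})_{\bar x'})$, which coincides with $\mathcal{H}^1((B_\rho(p_2)\cap H^+_{p_2,\nu})_{\bar x'})$ by vertical translation and thus dominates the left-hand side.

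Integrating over $\bar x'$ by Fubini and dividing by $\omega_k\rho^k$ I would obtain
\begin{equation*}
\frac{\mathcal{H}^k(F[v]\cap B_\rho(p_2)\cap H^+_{p_2,\nu})}{\omega_k\rho^k}\leq \frac{\mathcal{H}^k(F[v]\cap B_\rho(p_1)\cap H^+_{p_1,\nu})}{\omega_k\rho^k}\leq \frac{1}{2}.
\end{equation*}
Because $\nu=\nu^{F[v]}(p_2)$, the leftmost ratio tends to $1/2$ as $\rho\to 0^+$, forcing the middle ratio to the same limit; since $p_1\in\partial^*F[v]$, the only unit vector for which this density equals $1/2$ is $\nu^{F[v]}(p_1)$ itself, giving $\nu^{F[v]}(p_1)=\nu$. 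The main obstacle is the slice-wise inequality: the verification that $F[v]_{\bar x'}$ contains all of $(B_\rho(p_1))_{\bar x'}$ whenever it meets $(B_\rho(p_2))_{\bar x'}$ is precisely the step where the hypothesis $z\in[\min\{y,0\},\max\{y,0\}]$ (ensuring $p_1$ is closer to the symmetry hyperplane $\{y=0\}$ than $p_2$) is essential.
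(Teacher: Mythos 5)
Your proposal is correct and follows exactly the strategy the paper prescribes: Section \ref{sec: Steiner setting} states that the Steiner results ``can be obtained by adapting the arguments used in the previous sections,'' and your proof is precisely the translation of the proof of Proposition \ref{magic property} (via Lemma \ref{lem: decreasing densities}) with the rotation $R_\gamma$ replaced by the vertical translation $T_s$, the circular slices $\partial B^2_\lambda$ replaced by the vertical lines $\{\bar x'\}\times\mathbb{R}$, and the slice-wise comparison carried out directly on the symmetric intervals $(F[v])_{\bar x'}$. The simplification that $T_s$ fixes the direction $\nu$ (whereas $R_\gamma$ rotates $\nu_x$ in the circular case) correctly accounts for the absence of any rotated normal $\nu_\gamma$ in \eqref{formula symmetric normals in slices Steiner}, and the verification that $(B_\rho(p_1))_{\bar x'}\subseteq (F[v])_{\bar x'}$ whenever $(F[v])_{\bar x'}$ meets $(B_\rho(p_2))_{\bar x'}$ is the Steiner analogue of Step~1 of Lemma \ref{lem: decreasing densities}.
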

\noindent
The following remark is the Steiner counterpart of Remark \ref{rem: nu ^F_mu is symmetric wrt to x2=0} (compare it with \cite[Remark~2.5]{barchiesicagnettifusco}).
\begin{remark}\label{rem: nu ^F[v] is symmetric wrt to the Steiner hyperplane }
Let us notice that, by symmetry of the set $F[v]$ w.r.t. the hyperplane $\{ y=0 \}\subset \mathbb{R}^k$, the following property holds true: for every $(x',y)\in \partial^*F[v]$ we have that $(x',-y)\in \partial^*F[v]$ and 
$$
\nu^{F_\mu}(x',-y)= (\nu^{F[v]}_1(x',y),\dots,\nu^{F[v]}_{k-1}(x',y),-\nu^{F[v]}_k(x',y)).
$$ 
\end{remark}
\noindent
The following result represents the Steiner counterpart of Proposition \ref{real magic property}.
\begin{proposition} \label{Steiner real magic property}
Let $v :\mathbb{R}^{k-1}\to [0, \infty)$ be a Lebesgue measurable function such that $F[v]$ is a set of locally finite perimeter. Then, for every $x' \in  \R^{k-1} $
such that $( \partial^* F[v] )_{x'} \neq \emptyset$, the functions  
\begin{align}\label{eq: the Steiner components of nu^F[v]}
y \mapsto \nu^{F[v]}_i ( x', y)\quad \textnormal{for } i=1,\dots,k-1,\qquad
y \mapsto | \nu^{F[v]}_k ( x', y)|,
\end{align}
are constant in $( \partial^* F[v])_{x'}$.
\end{proposition}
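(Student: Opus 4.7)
The proof will closely mirror the argument given for Proposition \ref{real magic property}, replacing rotations by translations in the $y$-direction and the reflection $\textnormal{Ref}$ by the simpler reflection $(x',y)\mapsto (x',-y)$. The plan is to fix $x'\in\mathbb{R}^{k-1}$ with $(\partial^*F[v])_{x'}\neq\emptyset$, split the slice into
\[
(\partial^*F[v])_{x'}^+:=(\partial^*F[v])_{x'}\cap\{y\ge 0\},\qquad
(\partial^*F[v])_{x'}^-:=(\partial^*F[v])_{x'}\cap\{y<0\},
\]
and argue constancy separately on each half, then reconcile using Remark~\ref{rem: nu ^F[v] is symmetric wrt to the Steiner hyperplane }.

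The first step is to handle the \emph{upper} half. If $(\partial^*F[v])_{x'}^+=\{(x',y_0)\}$ is a singleton with $y_0=0$, then by symmetry with respect to $\{y=0\}$ it constitutes the whole slice and there is nothing to prove. If instead $y_0>0$, Remark~\ref{rem: nu ^F[v] is symmetric wrt to the Steiner hyperplane } forces $(\partial^*F[v])_{x'}=\{(x',y_0),(x',-y_0)\}$, and the coordinatewise reflection formula
\[
\nu^{F[v]}(x',-y_0)=\bigl(\nu^{F[v]}_1(x',y_0),\ldots,\nu^{F[v]}_{k-1}(x',y_0),\,-\nu^{F[v]}_k(x',y_0)\bigr)
\]
shows immediately that all $k-1$ tangential components coincide on the two points and that $|\nu^{F[v]}_k|$ is the same at both, which is exactly what is claimed.

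The main step is the non-trivial case in which $(\partial^*F[v])_{x'}^+$ contains at least two points. Pick any $(x',y_0)\in(\partial^*F[v])_{x'}^+$ with $y_0>0$, and apply Proposition~\ref{Steiner magic property}: for every $z\in[0,y_0]$ such that $(x',z)\in(\partial^*F[v])_{x'}$, one has $\nu^{F[v]}(x',z)=\nu^{F[v]}(x',y_0)$. Varying $(x',y_0)$ over $(\partial^*F[v])_{x'}^+$, this yields that the vector field $y\mapsto \nu^{F[v]}(x',y)$ is constant throughout $(\partial^*F[v])_{x'}^+$. The lower half is handled identically (or equivalently by reflecting via Remark~\ref{rem: nu ^F[v] is symmetric wrt to the Steiner hyperplane }), giving constancy of $y\mapsto\nu^{F[v]}(x',y)$ on $(\partial^*F[v])_{x'}^-$ as well. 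Finally, comparing one point in each half through the reflection formula recalled above shows that the first $k-1$ components agree across the two halves while the $k$-th component only agrees up to sign, proving the claimed constancy of the maps in \eqref{eq: the Steiner components of nu^F[v]}.

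The only genuinely delicate point is the pointwise (rather than $\mathcal{H}^{k-1}$-almost everywhere) character of the statement, which is inherited from Proposition~\ref{Steiner magic property}; as in the circular case, this is where one must be careful about isolated points of the slice, since the coarea formula cannot handle $\mathcal{L}^{k-1}$-negligible exceptional $x'$. Beyond this, the proof is a clean reduction to Proposition~\ref{Steiner magic property} plus the reflection symmetry, and poses no further technical obstacle.
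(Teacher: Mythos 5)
Your proof is correct and follows exactly the route the paper intends: the paper does not give an explicit proof of Proposition~\ref{Steiner real magic property} but states that the Steiner results follow by adapting the circular arguments, and your proof is the direct analogue of the proof of Proposition~\ref{real magic property} (same splitting of the slice into the halves $\{y\ge 0\}$ and $\{y<0\}$, same case distinction between a singleton and at least two points in the upper half, same reduction to Proposition~\ref{Steiner magic property} for the intra-half constancy and to the reflection symmetry of Remark~\ref{rem: nu ^F[v] is symmetric wrt to the Steiner hyperplane } for matching the two halves).
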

\noindent
Given $E\subset \R^k$ set of locally finite perimeter, we set
\begin{align}\label{eq: the Steiner version of nu^E}
\nu^E_{\mathsf{s}}(x',y):=(\nu^E_1(x',y),\dots, \nu^E_{k-1}(x',y),|\nu^{E}_k(x',y)|),\quad
\textnormal{for $\mathcal{H}^k$-a.e. $(x',y)\in\partial^*E$}.
\end{align}
\noindent
Thanks to Proposition \ref{Steiner real magic property}, we set 
\begin{align}\label{def: Steiner bar(nu)}
\bar{\nu}^{F[v]}_\mathsf{s}(x'):=
\begin{cases}
\nu^{F[v]}_\mathsf{s}(x',y) &\mbox{ if }(\partial^*F[v])_{x'}\neq \emptyset, \textnormal{ and }y\in (\partial^*F[v])_{x'}, \\
0 &\mbox{ otherwise.}
\end{cases}
\end{align}
\noindent
Next result is the Steiner counterpart of Lemma \ref{lem: mu e xi in BV loc} (compare this result with \cite[Lemma 3.1]{ChlebikCianchiFuscoAnnals05}). 
\begin{lemma}\label{lem:Steiner v in BV}
Let $v  : \R^{k-1} \to [0, \infty)$ be a Lebesgue measurable function, and let $E\subset\R^k$ be a $v$-distributed set of finite perimeter and finite volume. Then, $v\in BV(\mathbb{R}^{k-1})$. In addition, $|D_i v|$ is a finite Radon measure 
on $\R^{k-1}$ for every $i=1,\dots, k-1$, and  
for every Borel set $B \subset \R^{k-1}$ we have
\begin{align}
&\int_{B} \varphi(x') \, d D_{i} v (x')
= \int_{\partial^*E \cap  (B \times \mathbb{R}) }  
\varphi (x') \, \nu_{i}^{E} (x', y) \, d \mathcal{H}^{k-1} (x',y), \label{Di v for E} 
\end{align}
for every $i=1,\dots, k-1$, and for every bounded Borel function $\varphi: B \to \mathbb{R}$. Moreover, let $\sigma_{v}$ be the $\mathbb{R}^k$-valued Radon measure on $\mathbb{R}^{k-1}$ defined as
\begin{align}\label{ def of sigma_s}
\sigma_{v}(B):=\int_{B}d(D_1 v,\dots,D_{k-1}v,2\mathcal{L}^{k-1}\mres \{ v>0\} )(x'),\quad \forall\,B\subset \mathbb{R}^{k-1} \textnormal{ Borel}.
\end{align}
Then, for every Borel set $B\subset \mathbb{R}^{k-1}$ we get 
\begin{align}\label{eq: Steiner the real calc lemmata weaker}
\int_B\varphi(x')\cdot d\sigma_{v}(x')\leq\int_{\partial^*E\cap (B\times\mathbb{R})}\varphi(x')\cdot\nu^E_{\mathsf{s}}(x',y)\,d\mathcal{H}^{k-1}(x',y),
\end{align}
for every bounded Borel function $\varphi: B \to \mathbb{R}^k$ with non-negative last component, where $\nu^E_{\mathsf{s}}$ was defined in \eqref{eq: the Steiner version of nu^E}. In particular, equality sign holds true in \eqref{eq: Steiner the real calc lemmata weaker} if and only if $(E)_{x'}$ is $\mathcal{H}^1$-equivalent to a segment, for $\mathcal{L}^{k-1}$-a.e. $x'\in B$.
\end{lemma}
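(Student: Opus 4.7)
The proof will closely follow the structure of Lemma \ref{lem: mu e xi in BV loc}, but with substantial simplifications due to the absence of the radial weight and of the auxiliary function $\xi$. I would split it into four steps.

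First, I would establish that $v\in BV(\mathbb{R}^{k-1})$. The $L^1$ bound is immediate from Fubini: $\|v\|_{L^1(\mathbb{R}^{k-1})}=\int_{\mathbb{R}^{k-1}}\mathcal{H}^1(E_{x'})\,dx'=\mathcal{H}^k(E)<\infty$. For the total variation bound, I would fix $\varphi\in C^1_c(\mathbb{R}^{k-1})$ with $|\varphi|\le 1$ and consider, for each $i\in\{1,\dots,k-1\}$, the vector field $T(x',y):=(\varphi(x')e_i,0)\in\mathbb{R}^k$, whose divergence in $\mathbb{R}^k$ equals $\partial_i\varphi(x')$. By Fubini and the divergence theorem applied to $E$,
\begin{align*}
\int_{\mathbb{R}^{k-1}}v(x')\,\partial_i\varphi(x')\,dx'
&=\int_{E}\mathrm{div}_{(x',y)}T(x',y)\,dx'\,dy
=-\int_{\partial^*E}\varphi(x')\,\nu_i^E(x',y)\,d\mathcal{H}^{k-1}(x',y),
\end{align*}
which is bounded by $P(E)<\infty$. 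This gives $v\in BV(\mathbb{R}^{k-1})$, yields $|D_iv|(\mathbb{R}^{k-1})\le P(E)$, and proves \eqref{Di v for E} for every $\varphi\in C^1_c(\mathbb{R}^{k-1})$.

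Second, I would extend \eqref{Di v for E} from $C^1_c$ test functions to arbitrary bounded Borel $\varphi$ on any Borel set $B\subset\mathbb{R}^{k-1}$. This is a verbatim repetition of Steps 2b--2c of the circular proof: one introduces the auxiliary bounded Radon measure $\lambda(B):=|D_iv|(B)+\mathcal{H}^{k-1}(\partial^*E\cap(B\times\mathbb{R}))$, uses Lusin's theorem to approximate the characteristic function of $B$ multiplied by $\varphi$ by continuous functions $\varphi_h$ with $\lambda(\{\varphi\ne\varphi_h\})<1/h$, and passes to the limit.

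Third, I would derive the vector inequality \eqref{eq: Steiner the real calc lemmata weaker}. The key input is the one-dimensional Vol'pert theorem in the Steiner setting: for $\mathcal{L}^{k-1}$-a.e.\ $x'\in\{v>0\}$, $E_{x'}$ is a set of finite perimeter in $\mathbb{R}$, $\partial^*(E_{x'})=(\partial^*E)_{x'}$, and in particular $\mathcal{H}^0((\partial^*E)_{x'})\ge 2$. By the Coarea formula applied to the projection $(x',y)\mapsto x'$ (whose tangential Jacobian along $\partial^*E$ is $|\nu_k^E|$), for every Borel $g\ge 0$,
\begin{align*}
\int_{B\cap\{v>0\}}2g(x')\,dx'\le\int_{B}g(x')\mathcal{H}^0((\partial^*E)_{x'})\,dx'=\int_{\partial^*E\cap(B\times\mathbb{R})}g(x')\,|\nu_k^E(x',y)|\,d\mathcal{H}^{k-1}.
\end{align*}
Combining this with \eqref{Di v for E} componentwise (the first $k-1$ components by equality, the last one by inequality which is where the non-negativity of $\varphi_k$ is used) yields \eqref{eq: Steiner the real calc lemmata weaker}.

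Finally, the characterisation of equality follows because the only inequality used is the Coarea estimate $2\le\mathcal{H}^0((\partial^*E)_{x'})$, which is an equality for $\mathcal{L}^{k-1}$-a.e.\ $x'\in B\cap\{v>0\}$ precisely when $E_{x'}$ has exactly two boundary points, i.e.\ $E_{x'}$ is $\mathcal{H}^1$-equivalent to a single segment. The main conceptual step, and the one that distinguishes this from the circular proof, is verifying that the one-dimensional Vol'pert slicing theorem holds in the Steiner framework; this is classical (see \cite[Theorem D]{cianchifusco2}) and provides the precise identification $\partial^*(E_{x'})=(\partial^*E)_{x'}$ needed above. Beyond this, the proof is strictly simpler than its circular analogue since the chain rule manipulations involving $r$ and $\xi$ disappear entirely.
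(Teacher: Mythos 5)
Your proposal is correct and follows exactly the route the paper intends: Section~\ref{sec: Steiner setting} explicitly omits the proofs because ``they can be obtained by adapting the arguments used in the previous sections,'' and your four steps are precisely the Steiner adaptations of Steps~1, 2a--2c, and~3 of the proof of Lemma~\ref{lem: mu e xi in BV loc}, together with the circular Lusin-approximation argument. The Steiner case is indeed simpler, as you note, since the auxiliary function $\xi=\mu/r$ and the chain rule $D_r\mu=rD_r\xi+\xi\,dr\,dz$ are no longer needed, and the Jacobian factor in the coarea formula becomes simply $|\nu^E_k|$.

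One small point worth making explicit in a careful write-up: the vector field $T(x',y)=(\varphi(x')e_i,0)$ has compact support in $x'$ but not in $y$, so the Gauss--Green identity on $E$ is not applied verbatim to a $C^1_c$ field. This is harmless here: multiplying $T$ by a cut-off $\eta_R(y)$ with $\eta_R\to 1$ produces $\mathrm{div}(\eta_RT)=\eta_R\,\partial_i\varphi$ (since $T$ has no $y$-component, the term $\eta_R'T_k$ vanishes identically), and the extra terms pass to the limit by dominated convergence using $\mathcal{H}^k(E)<\infty$ and $P(E)<\infty$. The analogous subtlety does not arise in the circular proof because the angular variable $\mathbb{S}^1$ is compact, so $\Phi(V\times\mathbb{S}^1)$ is bounded for $V\cc U$. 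Likewise, in the equality characterisation, it is worth observing that the finite-volume hypothesis guarantees $0<v(x')<\infty$ for a.e.\ $x'\in\{v>0\}$, so each slice $E_{x'}$ is a finite union of bounded intervals and the bound $\mathcal{H}^0((\partial^*E)_{x'})\ge 2$ follows; by contrast the circular slices are constrained by $\mu\le 2\pi r$, which is why the circular Step~3 has to work with the projection $\mathrm{Pr}(\partial^*E)$. These are routine refinements, and the argument as you have structured it is sound.
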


\begin{remark}\label{rem: Steiner weak calculation lemmata specifically for Fmu}
Under the assumptions of the above lemma, let $B\subset \mathbb{R}^{k-1}$ be a Borel set, and let $E\subset \mathbb{R}^{k}$ be a $v$-distributed set of finite perimeter and finite volume such that $(E)_{x'}$ is $\mathcal{H}^1$-equivalent to a segment for $\mathcal{L}^{k-1}$-a.e. $x'\in B$. Then, we get 
\begin{align}\label{eq:  Steiner the real calc lemmata for Fmu}
\int_B\varphi(x')\cdot d\sigma_{v}(x')=\int_{\partial^*E\cap B\times\mathbb{R}}\varphi(x')\cdot\nu^E_{\mathsf{s}}(x',y)\,d\mathcal{H}^{k-1}(x',y),
\end{align}
for every bounded Borel function $\varphi: B \to \mathbb{R}^k$.
\end{remark}

\noindent
The next result is the Steiner counterpart of Lemma  \ref{lem: P(F[v])=|sigma|} (compare this result with \cite[Corollary 3.4]{CagnettiColomboDePhilippisMaggiSteiner}, and with \cite[Lemma 3.5]{ChlebikCianchiFuscoAnnals05}).
\begin{lemma}
Let $v  : \R^{k-1} \to [0, \infty)$ be a Lebesgue measurable function, and let $E\subset\R^k$ be a $v$-distributed set of finite perimeter and finite volume. Then, the set $F[v]\subset \mathbb{R}^{k}$ defined in \eqref{def: F[v]} is a set of finite perimeter and finite volume. Moreover,
\begin{align}\label{eq: |sigma_s|=Per (F[v])}
|\sigma_{v}|(B)=P(F[v];B\times\mathbb{R}),\quad \forall\,B\subset \mathbb{R}^{k-1} \textnormal{ Borel},
\end{align}
where $\sigma_{v}$ is the Radon measure defined in \eqref{ def of sigma_s}.
\end{lemma}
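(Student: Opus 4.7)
The plan is to follow the blueprint of Lemma \ref{lem: P(F[v])=|sigma|}, adapted to the simpler geometry of Steiner symmetrisation: each slice $(F[v])_{x'}$ is now a symmetric segment around $y=0$ instead of a circular arc, so the $1/|x|$ curvature term coming from the polar divergence disappears and the computations streamline. Finite volume of $F[v]$ is immediate from Fubini, since $\mathcal{L}^k(F[v]) = \int_{\R^{k-1}} v\,dx' = \mathcal{L}^k(E) < \infty$.

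For finite perimeter, I would approximate $v$, which belongs to $BV(\R^{k-1})$ by Lemma \ref{lem:Steiner v in BV}, by a sequence $v_j \in C^1_c(\R^{k-1})$ with $v_j \geq 0$, $v_j \to v$ pointwise $\mathcal{L}^{k-1}$-a.e., and $|\nabla v_j|\,\mathcal{L}^{k-1} \stackrel{*}{\rightharpoonup} |Dv|$. For $\varphi\in C^1_c(\R^k;\R^k)$ with $|\varphi|\leq 1$, split $\mathrm{div}_{(x',y)}\varphi = \mathrm{div}_{x'}\varphi_{x'} + \partial_y\varphi_y$ and compute on $F[v_j]$ using Fubini on the slices $(-v_j/2, v_j/2)$: the $\partial_y\varphi_y$ term yields $\int[\varphi_y(x',v_j/2)-\varphi_y(x',-v_j/2)]\,dx'$ via the fundamental theorem of calculus, while the $\mathrm{div}_{x'}\varphi_{x'}$ term is handled by observing that the auxiliary compactly supported vector $W_j(x'):=\int_{-v_j/2}^{v_j/2}\varphi_{x'}(x',y)\,dy$ satisfies $\int\mathrm{div}_{x'}W_j\,dx'=0$, producing
\begin{equation*}
\int\chi_{F[v_j]}\mathrm{div}\,\varphi = \int(-\tfrac12\nabla v_j,1)\cdot\varphi(x',v_j/2)\,dx' + \int(-\tfrac12\nabla v_j,-1)\cdot\varphi(x',-v_j/2)\,dx'.
\end{equation*}
Because $v_j\geq 0$, one has $\nabla v_j = 0$ wherever $v_j = 0$, so the integrand vanishes on $\{v_j=0\}$ (the two $\pm 1$ terms cancel), leaving
\begin{equation*}
\int\chi_{F[v_j]}\mathrm{div}\,\varphi \leq 2\,\mathcal{L}^{k-1}(\{v_j>0\}) + \int|\nabla v_j|\,dx'.
\end{equation*}
Passing to liminf as $j\to\infty$, combined with $\chi_{F[v_j]}\to\chi_{F[v]}$ a.e. and the estimate $|\{v>0\}|\leq \mathcal{L}^{k-1}(\pi(\partial^*E))\leq P(E)<\infty$ (orthogonal projections are $1$-Lipschitz, and $\{v>0\}$ lies in $\pi(\partial^*E)$ up to null sets by Vol'pert's theorem), yields $P(F[v]) \leq 2|\{v>0\}| + |Dv|(\R^{k-1}) < \infty$.

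For the identity \eqref{eq: |sigma_s|=Per (F[v])}, by regularity of Radon measures it is enough to prove equality on open sets $A\subset\R^{k-1}$. For the upper bound, given $\varphi\in C^0_c(A;\R^k)$ with $|\varphi|\leq 1$ I apply Remark \ref{rem: Steiner weak calculation lemmata specifically for Fmu} to $F[v]$ itself, whose slices are by construction symmetric segments, obtaining from \eqref{eq:  Steiner the real calc lemmata for Fmu}
\begin{equation*}
\int_A\varphi\cdot d\sigma_v = \int_{\partial^*F[v]\cap(A\times\R)}\varphi(x')\cdot\nu^{F[v]}_{\mathsf{s}}(x',y)\,d\mathcal{H}^{k-1} \leq P(F[v];A\times\R)
\end{equation*}
via Cauchy--Schwarz; the supremum gives $|\sigma_v|(A)\leq P(F[v];A\times\R)$. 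For the reverse inequality I use the polar decomposition $d\sigma_v/d|\sigma_v|:\R^{k-1}\to\mathbb{S}^{k-1}$ together with Proposition \ref{Steiner real magic property}, which guarantees that $\bar{\nu}^{F[v]}_{\mathsf{s}}$ from \eqref{def: Steiner bar(nu)} is a unit vector equal to $\nu^{F[v]}_{\mathsf{s}}$ on $\partial^*F[v]$. Then
\begin{equation*}
P(F[v];A\times\R) = \int_{\partial^*F[v]\cap(A\times\R)}\bar{\nu}^{F[v]}_{\mathsf{s}}(x')\cdot\nu^{F[v]}_{\mathsf{s}}(x',y)\,d\mathcal{H}^{k-1} = \int_A\bar{\nu}^{F[v]}_{\mathsf{s}}\cdot d\sigma_v \leq |\sigma_v|(A),
\end{equation*}
where the middle equality is \eqref{eq:  Steiner the real calc lemmata for Fmu} with $\varphi=\bar{\nu}^{F[v]}_{\mathsf{s}}$ and the final step is Cauchy--Schwarz against the polar decomposition.

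The hard part will be the approximation step, specifically producing the uniform bound on $P(F[v_j])$ that passes to the limit: one must exploit $v_j \geq 0$ to kill the lateral contributions on $\{v_j=0\}$, exactly mirroring the cancellations in the auxiliary vectors $A_x,A_z,B_x,B_z$ from Step~4 of the proof of Lemma \ref{lem: P(F[v])=|sigma|}. Everything else is a direct simplification of the circular argument, with Proposition \ref{Steiner real magic property} playing the same structural role that Proposition \ref{real magic property} played there.
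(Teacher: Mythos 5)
Your proposal is correct and follows exactly the blueprint the paper intends: Section \ref{sec: Steiner setting} omits the proofs precisely because they are direct adaptations of the circular arguments, and you run Lemma \ref{lem: P(F[v])=|sigma|} through the Steiner dictionary with Lemma \ref{lem:Steiner v in BV}, Remark \ref{rem: Steiner weak calculation lemmata specifically for Fmu}, and Proposition \ref{Steiner real magic property} playing the same roles as their circular counterparts, including the two-sided Cauchy--Schwarz argument against the polar decomposition $d\sigma_v/d|\sigma_v|$. The only small deviation is in controlling the lateral boundary term: you pass through $2\,\mathcal{L}^{k-1}(\{v_j>0\})$ and then $\mathcal{L}^{k-1}(\{v>0\})\le P(E)$ via the projection bound, which requires choosing the approximating sequence so that $\{v_j>0\}\subset\{v>0\}$ up to $\mathcal{L}^{k-1}$-nulls in order to pass the limit $j\to\infty$, whereas the circular Step~1 bounds $2\,\chi_{\{\mu_j>0\}}(r,z)\le\mathcal{H}^0((\partial^*E)_{(r,z)})$ slicewise and applies coarea to land directly on $P(E)$ (implicitly subject to the same caveat on the support of the approximants), so either route closes with $P(F[v])\le 2P(E)<\infty$.
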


\subsection{ Characterisation of equality cases}
Next result is the Steiner counterpart of Theorem \ref{Eq. cases per. ineq.} (compare this result with  \cite[Theorem 1.1, Lemma 3.4]{ChlebikCianchiFuscoAnnals05}). 
\begin{theorem}\label{Steiner Eq. cases per. ineq.}
Let $v  : \mathbb{R}^{k-1}\to [0, \infty)$ be a Lebesgue measurable function, and let $E \subset \R^k$ be a $v$-distributed set of finite perimeter and finite volume. Then, 
\begin{align}\label{eq: Steiner per ineq}
P (F[v]; B \times \mathbb{R}) \leq P (E; B \times \mathbb{R}),\quad \forall\,B \subset  \mathbb{R}^{k-1} \textnormal{ Borel}.
\end{align} 
Moreover, equality holds in \eqref{eq: Steiner per ineq} for some Borel set $B\subset \mathbb{R}^{k-1}$ if and only if both the following two conditions are satisfied.
\begin{itemize}
\vspace{.1cm}
\item[a)]For $\mathcal{L}^{k-1}$-a.e. $x'\in B$ we have that $(E)_{x'}$ is $\mathcal{H}^1$-equivalent to a segment.
\vspace{.1cm}
\item[b)]There exists $N\subset \partial^*E$ with $\mathcal{H}^{k-1}(N)=0$, with the property that \textbf{for every} $x'\in B$ such that $( \partial^* E\setminus N )_{x'} \neq \emptyset$, and $(\partial^*F[v])_{x'} \neq \emptyset$, we have that
\begin{align*}\label{eq: magic property for E}
\nu^{E}_{\mathsf{s}}(x',y)=\bar{\nu}^{F[v]}_{\mathsf{s}}(x')\quad \forall y\in (\partial^*E\setminus N)_{x'},
\end{align*}
where $\bar{\nu}_{\mathsf{s}}^{F[v]}$ was defined in \eqref{def: Steiner bar(nu)}.
\end{itemize}
\end{theorem}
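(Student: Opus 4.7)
The plan is to mirror, step by step, the proof of Theorem \ref{Eq. cases per. ineq.} given in Section \ref{sec: characterisation of equality cases}, using the Steiner analogues of the lemmas stated in Section \ref{sec: Steiner setting}. The key building blocks are already in place: Lemma \ref{lem:Steiner v in BV} gives the calculation inequality \eqref{eq: Steiner the real calc lemmata weaker} with the characterization of its equality cases, the Steiner analogue of Lemma \ref{lem: P(F[v])=|sigma|} gives $|\sigma_v|(B) = P(F[v]; B \times \mathbb{R})$, and Proposition \ref{Steiner real magic property} together with Remark \ref{rem: nu ^F[v] is symmetric wrt to the Steiner hyperplane } guarantees that $\bar{\nu}^{F[v]}_\mathsf{s}$ is a well-defined Borel vector field on $\mathbb{R}^{k-1}$. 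The Steiner counterpart of the identity $d\sigma_\mu/d|\sigma_\mu| = \bar{\nu}^{F_\mu}_\mathsf{c}$ (obtained by applying the Steiner version of \eqref{eq:  Steiner the real calc lemmata for Fmu} to $E = F[v]$ and comparing with the definition of the polar decomposition) gives
\[
\frac{d\sigma_v}{d|\sigma_v|}(x') = \bar{\nu}^{F[v]}_{\mathsf{s}}(x') \qquad \text{for $|\sigma_v|$-a.e. $x' \in \mathbb{R}^{k-1}$.}
\]

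With these ingredients in place, the perimeter inequality follows from the chain
\[
P(F[v]; B \times \mathbb{R}) = |\sigma_v|(B) = \int_B \bar{\nu}^{F[v]}_{\mathsf{s}}(x') \cdot d\sigma_v(x') \leq \int_{\partial^* E \cap (B \times \mathbb{R})} \bar{\nu}^{F[v]}_{\mathsf{s}}(x') \cdot \nu^E_{\mathsf{s}}(x',y) \, d\mathcal{H}^{k-1}(x',y) \leq P(E; B \times \mathbb{R}),
\]
where the first inequality is \eqref{eq: Steiner the real calc lemmata weaker} applied to the bounded Borel vector field $\varphi(x') = \chi_B(x')\bar{\nu}^{F[v]}_{\mathsf{s}}(x')$, whose last component is non-negative by the very definition \eqref{def: Steiner bar(nu)} (since $|\nu^{F[v]}_k|\geq 0$), and the second inequality is Cauchy--Schwarz combined with $|\nu^E_{\mathsf{s}}| = 1$ and $|\bar{\nu}^{F[v]}_{\mathsf{s}}| \leq 1$. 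Sufficiency of conditions a) and b) for equality is then immediate: condition a) produces equality in the first inequality via the equality clause of Lemma \ref{lem:Steiner v in BV}, and condition b) gives $\bar{\nu}^{F[v]}_{\mathsf{s}}(x') \cdot \nu^E_{\mathsf{s}}(x',y) = 1$ $\mathcal{H}^{k-1}$-a.e., producing equality in the second.

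For the converse, assume $P(F[v]; B\times \mathbb{R}) = P(E; B\times \mathbb{R})$. Equality in the first step forces condition a) by the equality clause of Lemma \ref{lem:Steiner v in BV}. Equality in Cauchy--Schwarz forces $\bar{\nu}^{F[v]}_{\mathsf{s}}(x') \cdot \nu^E_{\mathsf{s}}(x',y) = 1$ for $\mathcal{H}^{k-1}$-a.e. $(x',y) \in \partial^*E \cap (B \times \mathbb{R})$; we then set $N$ to be the exceptional $\mathcal{H}^{k-1}$-null subset of $\partial^* E$ where this identity fails, so that for every $(x',y) \in (\partial^* E \setminus N) \cap (B \times \mathbb{R})$ the two unit vectors coincide, which upon reading off components yields $\nu^E_{\mathsf{s}}(x',y) = \bar{\nu}^{F[v]}_{\mathsf{s}}(x')$; this is condition b).

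The main subtlety — mirroring what happens in the circular case — is the passage from the $\mathcal{H}^{k-1}$-a.e. identity to a pointwise statement valid for every $x' \in B$ with non-empty slices of both $\partial^* E \setminus N$ and $\partial^* F[v]$. The point is that the information one extracts from the Cauchy--Schwarz equality is only $\mathcal{H}^{k-1}$-almost everywhere on $\partial^*E$, so one cannot hope for a \emph{for every} statement on $B$ without introducing the exceptional set $N$. The role of Proposition \ref{Steiner real magic property} is exactly to reduce condition b) to a statement about a single representative value on each slice: once $\bar{\nu}^{F[v]}_{\mathsf{s}}(x')$ matches $\nu^E_{\mathsf{s}}(x',y)$ for \emph{some} $y$, the constancy of $\nu^{F[v]}_\mathsf{s}$ on $(\partial^*F[v])_{x'}$ promotes the match to every $y$ in the slice. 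Besides this formal verification, the other point requiring care is ensuring that the chosen test vector field $\chi_B \bar{\nu}^{F[v]}_{\mathsf{s}}$ satisfies the sign hypothesis of \eqref{eq: Steiner the real calc lemmata weaker}, which is built into the definition \eqref{def: Steiner bar(nu)} but must be invoked explicitly.
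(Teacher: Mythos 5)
Your proposal is a faithful line-by-line adaptation of the paper's proof of Theorem \ref{Eq. cases per. ineq.} to the Steiner setting, which is exactly what the paper indicates should be done (the paper leaves the Steiner statements unproved, referring back to the circular case). The chain $P(F[v];B\times\mathbb{R})=|\sigma_v|(B)=\int_B\bar\nu^{F[v]}_\mathsf{s}\cdot d\sigma_v\le\int_{\partial^*E\cap(B\times\mathbb{R})}\bar\nu^{F[v]}_\mathsf{s}\cdot\nu^E_\mathsf{s}\,d\mathcal{H}^{k-1}\le P(E;B\times\mathbb{R})$, the verification that the last component of $\chi_B\bar\nu^{F[v]}_\mathsf{s}$ is non-negative, and the extraction of a) and b) from the two equality cases all mirror the circular proof correctly. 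One small point of imprecision: you describe Proposition \ref{Steiner real magic property} as ``promoting'' a match at one $y$ to all $y$ in the slice, but its actual role in the proof is earlier and more basic --- it is what makes the field $\bar\nu^{F[v]}_\mathsf{s}$ in \eqref{def: Steiner bar(nu)} a \emph{well-defined} Borel vector field on $\mathbb{R}^{k-1}$ in the first place, which is needed both for the polar-decomposition identity $d\sigma_v/d|\sigma_v|=\bar\nu^{F[v]}_\mathsf{s}$ and to even write the test field $\chi_B\bar\nu^{F[v]}_\mathsf{s}$. Once that is in place, the Cauchy--Schwarz equality already holds at \emph{every} $(x',y)\in(\partial^*E\setminus N)\cap(B\times\mathbb{R})$ (not just for some $y$ per slice), and since $|\nu^E_\mathsf{s}|=1$ that equality automatically forces $|\bar\nu^{F[v]}_\mathsf{s}(x')|=1$, i.e., $(\partial^*F[v])_{x'}\neq\emptyset$, so condition b) follows without any further slice-by-slice propagation. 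This is a presentational slip rather than a gap; the proof as assembled is correct and matches the paper's route.
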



\begin{remark}
By definition of $\nu^E_{\mathsf{s}}$, condition b) of the above result implies that \textbf{for every} $x'\in B$ such that $( \partial^* E\setminus N )_{x'} \neq \emptyset$, and $(\partial^*F[v])_{x'} \neq \emptyset$ the functions
\begin{align*}
y \mapsto \nu^E_i(x',y)\quad\textnormal{for } i=1,\dots,k-1,\qquad 
y \mapsto |\nu^{E}_k(x',y)|,
\end{align*}
are constant in $(\partial^*E\setminus N)_{x'}$.
\end{remark}

\begin{remark}
Let us point out that if $B=\mathbb{R}^{k-1}$, condition \textnormal{a)} of the above result coincides with \cite[(1.7) of Theorem 1.1]{ChlebikCianchiFuscoAnnals05}, while condition \textnormal{b)} is a refinement of \cite[(1.8) of Theorem 1.1]{ChlebikCianchiFuscoAnnals05}.
\end{remark}

\def\cprime{$'$}

\end{document}